\newcommand{\tgt}{\top}
\newcommand{\nrml}{\perp}
\newcommand{\jump}{\triangle}
\newcommand{\ol}[1]{\overline{#1}}
\newcommand{\JJ}{\mathcal{J}}
\DeclareMathOperator{\ind}{ind}
\DeclareMathOperator{\second}{II}
\DeclareMathOperator{\sgn}{sgn}
\newcommand{\RN}[1]{%
  \textup{\uppercase\expandafter{\romannumeral#1}}%
}
\newcommand{\cov}{\mathnormal{D_t}}
\newcommand{\per}{\text{per}}
\DeclareMathOperator{\shape}{\mathsf{S}}
\DeclareMathOperator{\Hess}{\mathsf{Hess}}
\newcommand{\phys}{physical path\xspace}
\newcommand{\physs}{physical paths\xspace}
\newcommand{\reflective}{reflected\xspace}
\newcommand{\Ct}{\mathcal{C}^\infty}
\newcommand{\Rt}{\mathcal{R}}
\newcommand{\Kt}{\mathcal{K}}
\title[Morse index]{The Morse index theorem for mechanical systems
  with reflections}
\author{Jared Wunsch}
\email{jwunsch@math.northwestern.edu}
\address{Department of Mathematics, Northwestern University, Evanston, IL, USA}
\author{Mengxuan Yang}
\email{mxyang@math.berkeley.edu}
\address{Department of Mathematics, University of California, Berkeley, Berkeley, CA, USA}
\author{Yuzhou Zou}
\email{yuzhou.zou@northwestern.edu}
\address{Department of Mathematics, Northwestern University, Evanston, IL, USA}
\begin{document}
\maketitle
\begin{abstract}
We prove a Morse index theorem for action functionals on paths that are allowed to reflect
at a hypersurface (either in the interior or at the boundary of a
manifold).  Both fixed and periodic boundary conditions are treated.
    \end{abstract}

\section{Introduction}

The classical Morse index theorem \cite{Mo:32} on a smooth Riemannian manifold
$(M,g)$ says that at a geodesic $\alpha(t)$, $t
\in [0,T]$, the index of the second variation of the energy
functional with fixed endpoints equals the total number (with multiplicity) of points on
$\alpha$ conjugate to $\alpha(0)$.  Indeed, Morse's celebrated book \cite{Mo:32} treats a
number of variations on this theme, allowing for different boundary
conditions, such as periodicity, which entail interesting corrections
to the count of conjugate points; in the periodic case, Morse refers
to the additional term as the ``order of concavity.''

Here we are concerned with a generalization of these classical
results, where we also allow \emph{reflections}.  We
simultaneously treat two cases: either $M$ is a manifold with boundary $Y=\pa
M$, or $Y$ is an embedded interior hypersurface of $M$.  
The paths
under consideration are required to undergo reflection at $Y$ in the
case $Y=\pa M$, or permitted to undergo \emph{either} reflection or
transmission in the case of an interior hypersurface.

We are moreover concerned here not with the usual setting of
Riemannian geometry most common in the literature, but rather with the
more general case of a mechanical system: rather than just using an
energy functional given by the Riemannian metric,  we employ a
Lagrangian $L \in \CI (TM)$
$$
L(x,v) = \frac 12 g_{ij}(x) v^i v^j -V(x),
$$
and associated action
$$
J[\alpha] = \int_0^T L(\alpha(t), \dot \alpha(t)) \, dt.
$$
Here $V(x)$ is a real valued function that is not required to
be globally smooth: our hypotheses are that $V$ is smooth up to $Y$
but in the interior hypersurface case is merely required to have
matched values and first derivatives across the two sides.  (The
choice to work in this generality stems from our intended applications, described below.)

\emph{The main results of this paper are a Morse index theorem for the functional $J$,
both for trajectories with fixed boundary points and for the problem
of periodic trajectories}; these are stated in
Theorems~\ref{thm:morse1} and \ref{thm:morse} below.  We
  also prove an \emph{addition formula}
  (Theorem~\ref{thm:addition} below) that computes the difference of the Morse
  index (with fixed endpoint conditions) of a concatenation of two paths 
  and the sum of the Morse indices of the individual paths; the result is
  expressed as the index of the Hessian of the sum of the action
  functions along the two paths.

Our interest in this problem was stimulated by the aim of proving a
\emph{Gutzwiller trace formula} that would relate the asymptotics of
the trace of the Schr\"odinger propagator $e^{-it (\frac 12 \Lap+V)/h}$ to the
behavior of closed classical trajectories \cite{Gu:71}. Typically in such trace formulae, one
obtains a \emph{Maslov factor}, a power of $i$ that is the
Morse index of the variational problem for closed trajectories. For instance, in the
Riemannian geometric case related to the Duistermaat--Guillemin trace
formula \cite{Duistermaat-Guillemin1}, this variational characterization of
  the Maslov factor was established in
\cite{Du:76}.
In mechanical systems with a non-smooth potential $V$, singular across
$Y$ as described above, it turns out that in addition to
  ordinary periodic physical trajectories, there are contributions to the trace
  asymptotics from periodic trajectories that are \emph{reflected} at
  $Y$ as well as those transmitted
across $Y$ \cite{GaWu:18}. 
We were dismayed that we could not find any existing account of
the Morse index theorem for the periodic
variational problem with reflected mechanical trajectories; since this problem seems
a physically natural and important one, we have attempted to fill this
gap in the literature here.

A simple invocation of the usual proof of the index theorem
\emph{mutatis mutandis} does not suffice to deal with the case of
reflected paths.  To begin with, the spaces of allowable paths and
variations must be rather carefully set up: we must enforce
compatibility conditions at the reflection times (as well as at times
of transmission across $Y$), and this of course affects the space of
allowable variation vector fields.  The Jacobi fields, in turn, must
satisfy interesting geometric compatibility conditions at the moment
of reflection (involving the second fundamental form of the
hypersurface), and much of our work here has been to tell the story of
reflected Jacobi fields; the final proof of the index theorem is
straightforward once the tools to deal with the Jacobi fields are in place.

Some results in this direction do appear in the existing literature,
but not in the generality that we seek here.  In particular,
there are a number of treatments describing Jacobi fields for reflected geodesics by considering the
first variation of a family of broken trajectories reflecting at the
boundary according to Snell's law \cite{KaZh:03,Wo:07,ZhLi:07,IlSa:16,Ko:18}.
However, we have not been able to find analysis of the second
variation, nor a proof of the index theorem (neither for periodic nor fixed boundary
conditions).  It has also proved
impossible to find an account of the reflection conditions in the
presence of a potential.  Additionally, Morse's ``order of concavity'' arising in
the periodic variational problem moreover makes a somewhat obscure
appearance in \cite{Mo:32}, and is not easily suited to physical
interpretation in the mechanical context; the version here is not one
we have seen in the literature.  The addition formula
  of Theorem~\ref{thm:addition} is an essential ingredient in analyzing
  the Maslov factor in stationary phase expansions when composing the Schr\"odinger
  propagator with itself.  We have been unable to find a version of
  this in the existing literature even in the classical setting with
  smooth coefficients; we thus include it here as a classical dynamics theorem of
  potentially independent interest.

This paper is thus intended to provide a
thorough and, we hope, readable account of the generalization of the
classic theory of Jacobi fields and Morse indices to the general setting of mechanical systems with
reflections.

\subsection*{Structure of the paper}
We first work with paths with reflections off the hypersurface/boundary. In Section \ref{sec:path}, we consider permissible path spaces and variation vector fields in our
variations. In Section \ref{sec:1var}, we define the action of a path
and consider the first variation of the action. In Section
\ref{sec:2var}, we consider the second variation of the action and the
corresponding Jacobi fields. In Section \ref{sec:index}, we prove the Morse index theorem for both fixed endpoint paths and closed \reflective paths,
along with the Morse index addition formula.

\subsection*{Acknowledgments} JW received partial support from NSF grant DMS--2054424.

\section{Path space of \reflective trajectories}
\label{sec:path}

Consider a smooth Riemannian manifold with boundary $(M,g)$ and a compact embedded hypersurface $Y$, possibly disconnected.   
We assume that $\pa M \subset Y$. 
Near any point on $Y$, we may choose local Fermi coordinates $(x^1,\dots, x^n)$ so that locally $Y=\{x^1=0\}$;
thus $x^1$ is the signed distance to $Y$ (or, near
points where $Y$ coincides with $\pa M$, simply the distance) and locally the metric is of
the form
$$
g=(dx^1)^2+ \sum_{i,j=2}^n h_{ij}(x) dx^i dx^j
$$
For $W \in T_YM$ we adopt the notation $$W=W_\perp+W_\tgt$$ to
  denote the splitting of $W$ into normal and tangent components,
  using the metric, i.e., if $W = \sum W^j \pa_{x^j}$  in Fermi
  coordinates, then
  $$
W_\perp = W^1 \pa_{x^1},\quad W_\tgt = \sum_{j=2}^n W^j \pa_{x^j}
  $$
We will occasionally use the subscript $1$ to denote the $x^1$
component of curves or vector fields; in particular, then, with a choice of an oriented unit normal to $Y$, e.g.\ $N=\pa_{x^1}$
  in Fermi normal coordinates, we write
  $$
W_\perp=W_1 N.
$$
Throughout this paper we assume that
$V\in \mathcal{C}^{\infty}(M\backslash Y; \RR)\cap \mathcal{C}^1(M)$
and that $V$ is $\mathcal{C}^\infty$ smooth up to $Y$, separately from
each side if $Y$ is locally an interior hypersurface.  As noted above,
the allowed discontinuities of second or higher derivatives of $V$
across $Y$ are not especially interesting in the context of the geometric considerations
here, and are included for the sake of future applications to
Schr\"odinger operators (for which the derivative discontinuities of
$V$ reflect energy).

We will consider the variation problem for the action associated
  to the Lagrangian $L \in \CI(TM)$ given by
  $$
L(x,v) = \frac 12 g_{ij}(x) v^i v^j -V(x).
$$
Implicitly, then, we are dealing with the Hamiltonian dynamics for the
Hamiltonian function on $T^*M$ given by Legendre transform:
$$
\frac 12 g^{ij}(x) \xi_i \xi_j +V(x).
$$

\subsection{Path spaces and variations}

Fix a time $T$; this will be left implicit in our notation for path
spaces.  In what follows we will use the notation $\bullet(t\pm)$ for
$\lim_{\ep\downarrow 0} {\bullet(t\pm\ep )}$, with $\bullet$ denoting a
function, vector field, etc., depending on $t$.

Our path space is defined as follows:
\begin{definition}
    \label{def:path-n}
    Let $0=T_0<T_1<\dots <T_m<T_{m+1}=T$, with $\{T_i\,:\,1\le i\le m\}= \Rt \cup \Kt$ a partition into a set of \emph{reflection} times $\Rt$ and a set of \emph{kink} times $\Kt$.
    A \emph{\reflective path} with reflection times $\Rt$ and kink times $\Kt$ is a continuous 
		map $\alpha: [0,T]\to M$ such that
                \begin{enumerate}
\item If $\alpha(t) \in \pa M$ then there exists $i$ such that $t=T_i \in \Rt$.
                \item For each $0\le i\le m$, $\alpha$ restricted to $[T_i,T_{i+1}]$ is smooth (i.e., smooth in the interior with derivatives extending to the boundary of each subinterval).
    \item If $T_i\in \Rt$, then $\alpha(T_i)\in Y$, and if $x^1$ is a defining function for $Y$ and $I_i\ni T_i$
    is a sufficiently small open interval then the sign of
  $x^1\circ \alpha$ is constant on $I_i \backslash \{T_i\}$.
  \item If $T_i\in \Rt$, then ${(x^1\circ\alpha)'(T_i\pm)}\ne 0$, i.e., $\alpha$ is not tangent to $Y$ at $T_i$ from either direction.
  \end{enumerate}
  More generally, a piecewise smooth path $\alpha$ equipped with
    a set $\Rt$ of reflection times is said to be
    a \reflective path if there exist some choice of kink times $\Kt$ such
    that the above definition applies.
\end{definition}
\begin{remark} 
\label{rem:pathrem}
\phantom{}

\begin{itemize}\item We allow $\alpha(t)\in Y$ even if $t \notin \Rt$; the
importance of the reflection times arises in the requirement that
the paths do intersect $Y$ transversely at the reflection times
and stay on the same side of $Y$ before and after these times,
and in 
the following
definition of allowed variations, which will ensure that physical paths must
be reflected rather than allowing transmission across $Y$
at times in $\Rt$.  The specification of the reflection times
  is part of the data of the path; the kink times, by contrast, are
  not.

\item
The kink times $\Kt$ should be thought of times, outside of
$\Rt$, where $\alpha$ is allowed to fail to be smooth; such a set of
times is in general not unique. Indeed, if $\alpha$ is a \reflective
path with reflection and kink times $\Rt$ and $\Kt$, and $\Kt'$ is any
finite set with $\Kt\subset\Kt'\subset[0,T]\backslash\Rt$, then
$\alpha$ is a \reflective path with reflection and kink times $\Rt$
and $\Kt'$ as well. 
Note that $\alpha$ always admits a \emph{minimal} kink time set, namely
\[\Kt_{min} = \{t\in(0,T)\backslash\Rt\,:\,\alpha\text{ is not }\Ct\text{ at }t\},\]
and any other set of kink times $\Kt$ satisfies $\Kt\supset\Kt_{min}$.
As we see below, we will sometimes introduce
additional kink times, where $\alpha$ is in fact smooth, in order to
consider variations which develop kinks at those times.
\end{itemize}
\end{remark}

Let $\alpha$
be a \reflective path from $\alpha(0)=p$ to $\alpha(T)=p'$, with reflection times $\Rt$ and
kink times $\Kt$ as in the definition above, so that $\Rt\cup\Kt = \{T_1,\dots,T_m\}$.

\begin{definition}
    \label{def:variation}
    A variation of $\alpha(t)$
		is a map
    $\alpha(t,\ep)$ from $[0, T] \times (-\ep_0,\ep_0)$ to $M$,
    together with a family of smooth functions $0<T_1(\ep)<\dots < T_m(\ep)<T$, divided into a family of reflected time functions $\tilde\Rt$ and kink time functions $\tilde\Kt$ with $T_i(\ep)\in\tilde\Rt\iff T_i\in\Rt$,
    such that 
\begin{enumerate}
    \item $\alpha(t,0)=\alpha(t)$ for $t\in [0,T]$ and $T_i(0)=T_i$
      for $i=1,\dots, m$.
		\item For any fixed $\ep$, $\alpha(\cdot,\ep)$ is a
                  \reflective path as defined in Definition
                  \ref{def:path-n}, with reflection times
                  $\{T_i(\ep)\,:\,T_i\in\Rt\}$ and kink times
                  $\{T_i(\ep)\,:\,T_i\in\Kt\}$. 
		\item $\alpha(t,\ep)$ is smooth, up to the boundary, on each set of the form
		\[\{(t,\ep)\,:\,T_i(\ep)\le t\le T_{i+1}(\ep),\ep\in(-\ep_0,\ep_0)\},\quad 0\le i\le m\]
		(where we interpret $T_0(\ep) = 0$ and $T_{m+1}(\ep) = T$).
\end{enumerate}
    A two-parameter variation is defined analogously, with $\ep \in (-\ep_0, \ep_0)$ replaced by $(\delta, \ep) \in  (-\delta_0, \delta_0) \times (-\ep_0, \ep_0).$
\end{definition}
\begin{remark} Note that we allow the kink times to vary in $\ep$, which is not the standard
prescription e.g.\ in \cite{Mi:63}. This is useful owing to the
non-smoothness of physical paths at times of transmission across $Y$,
which may vary in families. \end{remark}

Throughout this paper, we shall consider the following three families of paths (with possibly multiple reflections) 
\begin{enumerate}
\item the \emph{space of \reflective paths} $\Omega(M)$: see Definition \ref{def:path-n}.
\item the \emph{space of \reflective paths with fixed endpoints} $p,p'$: 
\[\Omega_0(M;p,p') = \{\alpha\in\Omega(M)\,:\,\alpha(0) = p,\,\alpha(T)=p'\}\]
\item the \emph{space of periodic \reflective paths} where the endpoints are not fixed but need to be equal, i.e.,
\[\Omega_{\per}(M) = \{\alpha\in\Omega(M)\,:\,\alpha(0)=\alpha(T)\}.\]
Note that we do not require the derivative to match for $t=0,T$, but
this is consistent with our convention that elements of
$\Omega(M)$ are only piecewise smooth; the endpoint $\alpha(0)$ thus
plays no distinguished role, as there may or may not be a derivative
discontinuity there.
\end{enumerate}
Then we have
\[\Omega_0(M;p,p)\subset \Omega_{\per}(M)\subset \Omega(M)\ \text{ and } \ \Omega_0(M;p,p')\subset\Omega(M).\]

For notational convenience, we introduce notation for jumps and
  averages of vector fields along $\alpha$ at reflections and kinks.  Recall that for $Z$ any vector field along $\alpha$ and $t \in
  [0, T]$, we let
	\[Z(t\pm) := \lim_{\ep\to 0^+}Z(t\pm\ep);\]
	sometimes we denote this $Z^{\pm}$ if the time of evaluation $t$ is understood. We additionally set
\[\jump Z(t)= Z(t+) - Z(t-) \text{ and } \overline{Z}(t)= \tfrac{1}{2}(Z(t+)+Z(t-)).\]

We now anticipate the outcome of our analysis of first variations by defining reflected \physs; the relevance of this definition is demonstrated by Lemma~\ref{lemma:reflective_path}. In the following definition (and henceforth) we denote the covariant derivative along a path by
$$
D_t := \nabla_{\dot \alpha}.
$$ 
\begin{definition}
    \label{def:path.global}
We say that a \reflective path $\alpha(t)$, $t \in I=[0,T]$  is a \emph{reflected \phys} if the following conditions hold:
\begin{enumerate}
\item $\alpha\in\Omega(M)$
\item\label{geodesicitem} $D_t\dot \alpha + \nabla V(\alpha(t)) = 0$ on $I \backslash \{T_1, \dots, T_m \}$.
\item \label{reflectionitem} For all $T_j\in\Rt$, $\ol{\dot \alpha_\nrml(T_j)}  = 0$ and $\jump \dot \alpha_\tgt(T_j)=0$.
\item \label{kinkitem} For all $T_j\in\Kt$, $\jump\dot\alpha(T_j) = 0$.
\end{enumerate}
In addition, if $\alpha(t)$ also satisfies endpoint conditions $\alpha(0)=\alpha(T)$ and $\dot\alpha(0)=\dot \alpha(T)$, it is called a \emph{periodic \reflective \phys}.
\end{definition}

\begin{remark} 
Since $\alpha$ and $\dot \alpha$ are continuous at points in $\Kt$ and
since $\alpha$ solves a second order ODE with smooth coefficients away
from $Y$, a \reflective \phys $\alpha$ is in fact smooth at \emph{interior} kinks.  Since
$\nabla^2 V$ is allowed to be discontinuous at $Y$, however, third
derivatives of $\alpha$ may
be discontinuous at kinks in $Y$, i.e., at times $T_j \in \Kt$ with $\alpha(T_j) \in Y$. Nonetheless, $\alpha$ will be $C^2$ at such kinks, and hence for a \reflective \phys $\alpha$, the reflective times $\Rt$ can be characterized as the times where $\alpha$ is continuous, but not $C^2$.
  \end{remark}

We now show that, given a \reflective \phys, we can perturb the initial position and velocity to uniquely produce another \reflective \phys which reflects at similar times. 
\begin{lemma}
\label{lem:existuniq}
Let $\alpha(t)$ be a \reflective \phys, with initial position and velocity $(\alpha(0),\dot\alpha(0))\in TM$. For sufficiently small $\ep>0$, there exists a neighborhood $V_{\ep}$ of $(\alpha(0),\dot\alpha(0))$ in $TM$ with the property that for all $(x,v)\in V_{\ep}$, there exists a unique \reflective \phys $\alpha_{x,v}$ such that
$(\alpha_{x,v}(0),\dot{\alpha}_{x,v}(0)) = (x,v)$,
$\alpha_{x,v}$ has reflection times $\Rt_{x,v}$ with $|\Rt_{x,v}| = |\Rt|$,
and, if $\Rt = \{T_1<\dots<T_r\}$ and $\Rt_{x,v} = \{\tilde{T}_1<\dots<\tilde{T}_r\}$, we have
$|\tilde{T}_i-T_i|\le\ep$ for $i=1,\dots,r$.
\end{lemma}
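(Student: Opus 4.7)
The plan is to construct $\alpha_{x,v}$ iteratively, reflection by reflection, using the implicit function theorem to locate each perturbed reflection time and the classical flow of Newton's equation to fill in the pieces between reflections. Between reflections, a \reflective \phys satisfies the Newton equation $D_t\dot\alpha + \nabla V(\alpha) = 0$, which as a first-order system on $TM$ has continuous and locally Lipschitz right-hand side (since $V \in \Ct(M\backslash Y)\cap C^1(M)$, with $\nabla V$ smooth up to $Y$ from each side and continuous across $Y$, hence locally Lipschitz on $M$). Picard--Lindel\"of therefore yields a unique flow $\phi_t(x,v)$ on $TM$, continuous in all arguments on a neighborhood of the graph of $t\mapsto (\alpha(t),\dot\alpha(t))$; transmission crossings of $Y$ pose no difficulty because both $\dot\alpha$ and $\ddot\alpha = -\nabla V(\alpha)$ remain continuous through them, so the kink times in $\Kt$ do not interrupt the flow.

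Write $\Rt = \{T_1<\dots<T_r\}$ and consider $F_1(t,x,v) := x^1(\pi\phi_t(x,v))$ near $(T_1,\alpha(0),\dot\alpha(0))$, where $\pi:TM\to M$ is the canonical projection and $x^1$ is a local defining function for $Y$ near $\alpha(T_1)$. By the transversality condition (4) of Definition~\ref{def:path-n}, $\partial_t F_1\ne 0$ at this basepoint, so $F_1(\cdot,x,v)$ is strictly monotonic in $t$ near $T_1$ uniformly for $(x,v)$ near $(\alpha(0),\dot\alpha(0))$; the intermediate value theorem then produces a continuous function $\tilde T_1(x,v)$, uniquely characterized near $T_1$, with $F_1(\tilde T_1(x,v),x,v)=0$ and $\tilde T_1(\alpha(0),\dot\alpha(0))=T_1$. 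Apply the reflection map $R:(q,w)\mapsto (q,w_\tgt - w_\nrml)$ on $TM|_Y$ to the state $\phi_{\tilde T_1(x,v)}(x,v)$, and continue the flow from the reflected data. Iterating this argument at $T_2,\dots,T_r$ yields continuous functions $\tilde T_i(x,v)$ close to $T_i$, and a piecewise-smooth path $\alpha_{x,v}$ with reflection times $\tilde T_i(x,v)$ satisfying conditions \eqref{geodesicitem}, \eqref{reflectionitem}, and \eqref{kinkitem} of Definition~\ref{def:path.global} by construction. The underlying reflective-path conditions of Definition~\ref{def:path-n} --- namely the transversality at each $\tilde T_i$ and the sign condition (3) --- are open and so persist under small perturbations of $(x,v)$.

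For uniqueness, suppose $\beta$ is any \reflective \phys with $(\beta(0),\dot\beta(0))=(x,v)$ and $r$ reflection times within $\ep$ of the $T_i$. Then $\beta$ must agree with $\phi_t$ between reflections by ODE uniqueness, and the reflection condition \eqref{reflectionitem} of Definition~\ref{def:path.global} forces the post-reflection velocity to equal $R$ applied to the incoming one; the local uniqueness of each $\tilde T_i(x,v)$ as a zero of the corresponding $F_i$ then forces $\beta = \alpha_{x,v}$ inductively. The main technical care lies in two places: first, in verifying that the flow behaves cleanly across transmission crossings of $Y$ --- handled by the hypothesis $V\in C^1(M)$, which gives the ODE a continuous right-hand side and ensures the flow is well-defined in a classical sense; and second, in ruling out any spurious reflections in the perturbed trajectory between the $\tilde T_i(x,v)$, which follows from compactness of each segment $\alpha([T_i,T_{i+1}])$ together with continuous dependence of $\phi_t$ on $(x,v)$ and the stability under perturbation of the observation that the original segments either lie a positive distance from $Y$ or cross $Y$ transversely at isolated transmission points.
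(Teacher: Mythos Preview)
Your proof is correct and follows essentially the same approach as the paper's: construct the perturbed path segment by segment using the Newton flow, locate each perturbed reflection time via transversality, apply the reflection to the velocity, and iterate; for uniqueness use ODE uniqueness between reflections together with local uniqueness of each crossing time. The paper phrases the crossing-time argument as ``first intersection with $Y$ in $[T_1-\ep,T_1+\ep]$'' and isolates a technical neighborhood $W$ to ensure that a transmitted (rather than reflected) branch cannot return to $Y$ within $\ep$ of $T_1$, which is exactly the role played by your monotonicity of $F_1$ near $T_1$; your treatment of transmission crossings at kink times via Lipschitz continuity of the right-hand side is slightly more explicit than the paper's, but the substance is the same.
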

\begin{proof}
The idea is that since a \reflective \phys solves a second-order ODE classically up to reflection times, it is uniquely specified, up until the reflection time, by its initial position and velocity; at reflection times the path experiences a jump in its velocity uniquely specified by the reflection condition \eqref{reflectionitem} in Definition \ref{def:path.global}, which uniquely specifies the path until the next reflection, and so on. We make this idea rigorous below.

Suppose for convenience that $\Rt$ consists of a single time $T_1$,
i.e.\ $\alpha$ reflects just once; let $x^1$ be a boundary defining
function of $Y$ near $\alpha(T_1)$. Let $\ep>0$ be sufficiently small,
so that there exists a neighborhood $W$ of
$(\alpha(T_1),\dot\alpha(T_1-))\in TM\backslash TY$ satisfying the
following technical assumption: if $\beta$ is a $C^2([0,2\ep])$
solution to $D_t\dot{\beta}(t)+\nabla V(\beta(t)) = 0$
in $(0,2\ep)$, with $\beta(0)\in Y$ and $(\beta(0),\dot\beta(0))$
or $(\beta(0),Q\dot\beta(0))$ is in $W$, where $Q:T_YM\to T_YM$ is
the reflection across $Y$, then $\text{sgn }(x^1\circ\beta)'(t)$ is
constant on $(0,2\ep)$. (That is, \reflective \physs starting on $Y$
with initial velocity in $W$ or $Q(W)$ will always move away from the
boundary and will not return in time $2\ep$.)
Such a neighborhood exists for $\ep$ sufficiently small by the non-tangency assumption of $\alpha$ at reflected times.

Given $(x,v)$ near $(\alpha(0),\dot\alpha(0))$, we construct a nearby \reflective \phys $\alpha_{x,v}$ as follows: we note that if $(x,v)$ is sufficiently close to $(\alpha(0),\dot\alpha(0))$ (or any vector if $M$ is complete), then there exists a unique $C^2$ solution on $[0,T_1+\ep]$ to
\[D_t\dot{\tilde{\alpha}}(t)+\nabla V(\tilde\alpha(t)) = 0\text{ in }(0,T_1+\ep),\quad (\tilde{\alpha}(0),\dot{\tilde\alpha}(0)) = (x,v).\]
Note that such a solution, if it intersects $Y$, does not reflect off $Y$. Moreover, if $(x,v)$ is sufficiently close to $(\alpha(0),\dot\alpha(0))$, then the corresponding path $\tilde\alpha$ intersects $Y$ at some time in $[T_1-\ep,T_1+\ep]$; let $\tilde{T}_1$ denote the first such time. Finally, for $(x,v)$ sufficiently close to $(\alpha(0),\dot\alpha(0))$, we can also arrange for $(\tilde\alpha(\tilde{T}_1),\dot{\tilde\alpha}(\tilde{T}_1-))\in W$. We let $V$ be a neighborhood of $(\alpha(0),\dot\alpha(0))$ such that its elements satisfy all of the conditions above.

Then, for $(x,v)\in V$, we construct $\alpha_{x,v}$ as follows. For $t\in [0,\tilde{T}_1]$, we set $\alpha_{x,v}(t) = \tilde{\alpha}(t)$ as above. For $t\in[\tilde{T}_1,T]$, we let $\alpha_{x,v}$ be the unique $C^2([\tilde{T}_1,T])$ solution to
\[D_t\dot\alpha_{x,v}(t) + \nabla V(\alpha_{x,v}(t)) = 0\text{ in }(\tilde{T}_1,T),\quad (\alpha_{x,v}(\tilde{T}_1),\dot\alpha_{x,v}(\tilde{T}_1+)) = (\tilde\alpha(T_1),Q\dot{\tilde\alpha}(\tilde{T}_1-)).\]
Then, by construction, $\alpha_{x,v}$ is a \reflective \phys, with one reflection at $\tilde{T}_1$ satisfying $|\tilde{T}_1-T_1|\le\ep$ (note that $\dot\alpha_{x,v}(\tilde{T}_1-)=\tilde{\dot\alpha}(\tilde{T}_1-)\in W\subset TM\backslash TY$ guarantees the non-tangency condition). This shows the existence for $(x,v)\in V$.

For uniqueness, we note that if $\beta_{x,v}$ is another \reflective \phys satisfying $(\beta_{x,v}(0),\dot\beta_{x,v}(0)) = (x,v)$ with exactly one reflection time $\tau_1$ satisfying $|\tau_1-T_1|\le \ep$, then $\beta_{x,v}$ is $C^2$ on $[0,\tau_1]$, and in particular it must agree with $\alpha_{x,v}$ up to time $T_1-\ep$, after which it continues to agree with $\alpha_{x,v}$ until $\alpha_{x,v}$ hits $Y$, i.e. at time $\tilde{T}_1$. The only way $\beta_{x,v}$ does not agree with $\alpha_{x,v}$ after that is if $\beta_{x,v}$ transmits through $Y$ instead of reflecting across $Y$, i.e. $\dot\beta_{x,v}(\tilde{T}_1+)$ equals $\dot\alpha_{x,v}(\tilde{T}_1-)$ instead of its reflection. However, by the technical assumption made above, this would force $\beta_{x,v}$ to not intersect $Y$ again in $(\tilde{T}_1,\tilde{T}_1+2\ep]$, and in particular it will not reflect at a time within $\ep$ of $T_1$. This forces $\beta_{x,v}$ to reflect at $\tilde{T}_1$, and since there are no other reflections, this means $\beta_{x,v}$ is a $C^2$ solution on $[\tilde{T}_1,T]$ whose value and derivative agrees with those of $\alpha_{x,v}$ at $\tilde{T}_1$, forcing $\beta_{x,v} = \alpha_{x,v}$ on $[\tilde{T}_1,T]$ as well. This gives uniqueness as well.

The case for multiple reflections is similar, by performing the above technical constructions in a neighborhood of each reflection time.

\end{proof}

Given a variation $\alpha(t,\ep)$ along a family of \reflective paths, we can consider the tangent vector field
$$Z(t) =\frac{\pa \alpha}{\pa \ep}\big\rvert_{\ep=0}$$
along $\alpha(t)$. Note that the $\ep$ derivative is only well-defined on $[0,T]\backslash(\Rt\cup\Kt)$; however the one-sided limits $Z(T_j\pm)$ exist for all $T_j\in\Rt\cup\Kt$. We use this notion
to define corresponding tangent spaces $T_\alpha\Omega(M)$,
$T_\alpha\Omega_0(M;p,p'),$ and $T_\alpha\Omega_{\per}(M)$. These
spaces are characterized by our enforcement of the continuity
conditions at the boundary, as follows.

\begin{lemma}\label{lemma:tangentspace}
 Let $\alpha(t,\ep)\in\Omega(M)$ be a family of \reflective paths 
 with reflection time functions $\tilde\Rt$ and kink time functions $\tilde\Kt$, and let $\tilde\Rt\cup\tilde\Kt = \{T_i(\ep)\,:\,1\le i\le m\}$. 
Then, for each $1\le i\le m$, the variation vector field $Z=\pa\alpha/\pa \ep$ satisfies the jump condition 
\begin{equation}
\label{eq:adm-cont}
T_i'(0)\dot\alpha(T_i-) + Z(T_i-) = T_i'(0)\dot\alpha(T_i+) + Z(T_i+).
\end{equation}
If in addition we have $T_i\in\Rt$, then we have the additional condition that
\begin{equation}
\label{eq:adm-ref}
T_i'(0)\,  \dot \alpha_\nrml(T_i-) +Z_\nrml(T_i-) =  T_i'(0)\,  \dot \alpha_\nrml(T_i+) +Z_\nrml(T_i+)=0.
\end{equation}
\end{lemma}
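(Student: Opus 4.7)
The strategy is to differentiate the continuity identity for $\alpha$ at $T_i(\epsilon)$ (and, in the reflective case, the identity placing $\alpha(T_i(\epsilon),\epsilon)$ on $Y$) with respect to $\epsilon$ at $\epsilon=0$, invoking condition (3) of Definition~\ref{def:variation} to make sense of the chain rule from each side.

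Specifically, by condition (3) of Definition~\ref{def:variation}, the restriction of $\alpha$ to the closed region $\{T_{i-1}(\epsilon)\le t\le T_i(\epsilon)\}$ is smooth up to the boundary, so it admits a smooth extension $\alpha^-(t,\epsilon)$ on an open neighborhood of $(T_i,0)$; similarly the restriction to $\{T_i(\epsilon)\le t\le T_{i+1}(\epsilon)\}$ extends to a smooth $\alpha^+(t,\epsilon)$. By construction, $\dot\alpha^\pm(T_i,0)=\dot\alpha(T_i\pm)$ and $\partial_\epsilon \alpha^\pm(T_i,0)=Z(T_i\pm)$. The continuity of $\alpha(\cdot,\epsilon)$ at $T_i(\epsilon)$ gives the identity
\[\alpha^-(T_i(\epsilon),\epsilon)=\alpha^+(T_i(\epsilon),\epsilon)\]
for all $\epsilon$ near $0$. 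Differentiating at $\epsilon=0$ by the chain rule yields
\[T_i'(0)\dot\alpha(T_i-)+Z(T_i-)=T_i'(0)\dot\alpha(T_i+)+Z(T_i+),\]
which is \eqref{eq:adm-cont}.

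For \eqref{eq:adm-ref}, suppose $T_i\in\Rt$. Then by condition (2) of Definition~\ref{def:variation} together with Definition~\ref{def:path-n}(3), we have $\alpha(T_i(\epsilon),\epsilon)\in Y$ for all $\epsilon$. Choosing Fermi coordinates so that $Y=\{x^1=0\}$ near $\alpha(T_i)$, this reads
\[x^1\bigl(\alpha^\pm(T_i(\epsilon),\epsilon)\bigr)=0\]
for $\epsilon$ near $0$. Differentiating at $\epsilon=0$ and using $dx^1|_Y(W)=W_1$, i.e.\ $dx^1$ extracts the normal component $W_\nrml=W_1 N$, we obtain
\[T_i'(0)\,\dot\alpha_\nrml(T_i\pm)+Z_\nrml(T_i\pm)=0,\]
which is \eqref{eq:adm-ref}. (The middle equality in \eqref{eq:adm-ref} is then automatic from \eqref{eq:adm-cont} projected onto the normal direction.)

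There is no serious obstacle: the only nontrivial point is that condition (3) of Definition~\ref{def:variation} is exactly what is needed to make sense of one-sided partial derivatives of $\alpha$ at the variable break-times $T_i(\epsilon)$, so that the chain rule applies on each piece separately and yields two generically different expressions for the derivative of the common value $\alpha(T_i(\epsilon),\epsilon)$.
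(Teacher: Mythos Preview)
Your proof is correct and follows essentially the same approach as the paper: differentiate the continuity identity $\alpha(T_i(\epsilon)-,\epsilon)=\alpha(T_i(\epsilon)+,\epsilon)$ to get \eqref{eq:adm-cont}, and differentiate the constraint $\alpha_1(T_i(\epsilon)\pm,\epsilon)=0$ in Fermi coordinates to get \eqref{eq:adm-ref}. Your extra care in invoking condition (3) of Definition~\ref{def:variation} to justify the one-sided chain rule via smooth extensions $\alpha^\pm$ is a welcome elaboration, but the underlying argument is the same.
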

In particular, if $\alpha$ is $\mathcal{C}^1$ at $T_i$, then $Z(T_i-)=Z(T_i+)$, i.e.\ at such times we may view $Z$ as being well-defined and continuous at $T_i$.
\begin{proof}
For each $i$ we have the continuity equation
\[\alpha(T_i(\ep)-,\ep) = \alpha(T_i(\ep)+,\ep).\]
Differentiating in $\ep$ and evaluating at $\ep=0$ yields \eqref{eq:adm-cont}.
If, in addition, $T_i\in\Rt$, then as usual letting $\alpha_1(t)$
denote the signed distance from the boundary (first component in Fermi coordinates),
for all $\ep$,
$$\alpha_1(T_i(\ep)\pm,\ep)=0$$ (since the path is in the boundary at time
$T_i(\ep)$).
Differentiating
in $\ep$ and evaluating at $\ep=0$ yields \eqref{eq:adm-ref}.
\end{proof}

\begin{lemma}\label{lemma:integration}
Let $\alpha\in\Omega(M)$ be a \reflective path with reflections and kinks at $\Rt$ and $\Kt$, respectively, with $\Rt\cup\Kt = \{T_i\,:\,1\le i\le m\}$.  Let $Z$ be a vector field along $\alpha$ defined on $[0,T]\backslash(\Rt\cup\Kt)$
such that $Z|_{(T_i,T_{i+1})}$ extends smoothly to $[T_i,T_{i+1}]$ for all $0\le i\le m$, and that for some $\mu_1,\dots, \mu_m \in \RR$,
\begin{equation}
\label{eq:muadm-cont}
\mu_i\dot\alpha(T_i-) + Z(T_i-) = \mu_i\dot\alpha(T_i+) + Z(T_i+) 
\end{equation}
and
\begin{equation}
\label{eq:muadm-ref}
\mu_i\dot\alpha_\nrml(T_i-) + Z_\nrml(T_i-) = \mu_i\dot\alpha_\nrml(T_i+) + Z_\nrml(T_i+) = 0\quad\text{if }T_i\in\Rt.
\end{equation}
Then there is a variation $\alpha(t,\ep) \in\Omega(M)$ of $\alpha$ with $Z=\pa
\alpha/\pa \ep\rvert_{\ep=0}$ and with $T_i'(0) =\mu_i$.
  \end{lemma}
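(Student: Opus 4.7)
The plan is to construct a variation realizing the prescribed first-order data, i.e., to establish the converse of Lemma~\ref{lemma:tangentspace}. First I set $T_i(\ep):=T_i+\ep\mu_i$, which gives $T_i'(0)=\mu_i$ and matches the partition $\tilde\Rt\cup\tilde\Kt$ with $\Rt\cup\Kt$. On each subinterval $[T_i,T_{i+1}]$ introduce the smooth vector field along $\alpha$
\[V_i(s):=Z(s)+\mu(s)\,\dot\alpha(s),\qquad \mu(s):=\tfrac{T_{i+1}-s}{T_{i+1}-T_i}\mu_i+\tfrac{s-T_i}{T_{i+1}-T_i}\mu_{i+1}.\]
Hypothesis \eqref{eq:muadm-cont} says exactly that $V_{i-1}(T_i-)=V_i(T_i+)$ as vectors in $T_{\alpha(T_i)}M$, while for $T_i\in\Rt$ hypothesis \eqref{eq:muadm-ref} forces this common value to lie in $T_{\alpha(T_i)}Y$.

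With $\tau_i^\ep\colon[T_i(\ep),T_{i+1}(\ep)]\to[T_i,T_{i+1}]$ the affine reparametrization, the tentative variation is
\[\alpha_0(t,\ep):=\exp_{\alpha(\tau_i^\ep(t))}\bigl(\ep\,V_i(\tau_i^\ep(t))\bigr),\qquad t\in[T_i(\ep),T_{i+1}(\ep)].\]
A short calculation using $\partial_\ep\tau_i^\ep(t)|_{\ep=0}=-\mu(t)$ yields $\partial_\ep\alpha_0(t,\ep)|_{\ep=0}=-\mu(t)\dot\alpha(t)+V_i(t)=Z(t)$ on the interior of each subinterval, and the matching of $V_{i-1}$ with $V_i$ at each joint makes $\alpha_0(\cdot,\ep)$ continuous across every $T_i(\ep)$.

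The main obstacle is that at $T_i\in\Rt$ the tentative endpoint $\alpha_0(T_i(\ep),\ep)=\exp_{\alpha(T_i)}(\ep V_i(T_i))$ lies in $Y$ only to first order in $\ep$: since $V_i(T_i)\in T_{\alpha(T_i)}Y$ but the ambient geodesic may curve off $Y$, the normal component is $O(\ep^2)$ (governed by the second fundamental form). To enforce the exact reflection condition I correct $\alpha_0$ in Fermi coordinates near each reflection point by
\[\alpha(t,\ep):=\alpha_0(t,\ep)-\sum_{T_j\in\Rt}\chi\bigl((t-T_j(\ep))/\delta\bigr)\,x^1\bigl(\alpha_0(T_j(\ep),\ep)\bigr)\,\pa_{x^1},\]
with $\chi$ a smooth cutoff satisfying $\chi(0)=1$ and $\operatorname{supp}\chi\subset(-1,1)$, and $\delta>0$ small enough that distinct cutoffs have disjoint support, each contained in its corresponding Fermi patch. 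This $O(\ep^2)$ correction leaves the variation vector field at $\ep=0$ unchanged, while by construction $x^1(\alpha(T_j(\ep),\ep))=0$, so $\alpha(T_j(\ep),\ep)\in Y$; if $\alpha(T_j)\in\pa M$, then the Fermi projection keeps the corrected point in $\pa M$ as well.

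Finally, for $|\ep|$ small I verify $\alpha(\cdot,\ep)\in\Omega(M)$ with the prescribed reflection and kink times: non-tangency at reflection times is an open condition and is preserved by continuity; the sign of $x^1\circ\alpha(t,\ep)$ on each side of $T_i(\ep)$ is controlled by the leading $O(t-T_i(\ep))$ term whose coefficient is close to the nonzero $\dot\alpha_1(T_i\pm)$, so the sign pattern matches the $\ep=0$ case on both sides (using $\dot\alpha_1(T_i+)=-\dot\alpha_1(T_i-)$); and $\alpha(\cdot,\ep)$ stays in $M\setminus\pa M$ away from the reflection times by compactness and continuity. This gives the desired variation.
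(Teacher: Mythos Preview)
Your argument is essentially correct and takes a genuinely different route from the paper's.  The paper uses $\exp_{\alpha(t)}(\ep Z(t))$ directly away from the break times and then, near each $T_i$, adds correction terms $r_j^\pm(\ep)\varphi(t)$ in local coordinates to force \emph{both} continuity across the joint \emph{and} (at reflection times) the condition $\alpha_1(T_i(\ep),\ep)=0$; it then checks that $(r^\pm)'(0)=0$ using \eqref{eq:muadm-cont}--\eqref{eq:muadm-ref}.  Your approach instead absorbs the continuity issue into the construction itself: the affine time-reparametrization $\tau_i^\ep$ together with the modified vector field $V_i=Z+\mu\,\dot\alpha$ makes the pieces match automatically at every $T_i(\ep)$, so only the single $O(\ep^2)$ normal correction at reflection times is needed.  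This is a clean trick and arguably tidier than the paper's additive corrections, at the cost of the extra bookkeeping with $\tau_i^\ep$.

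A few minor points to clean up.  First, you invoke $\dot\alpha_1(T_i+)=-\dot\alpha_1(T_i-)$, but $\alpha$ is only assumed to be a \reflective path, not a \reflective \phys; all you actually know (and all you need) is that $\dot\alpha_1(T_i-)$ and $\dot\alpha_1(T_i+)$ are nonzero of opposite sign, which follows from conditions (3)--(4) of Definition~\ref{def:path-n}.  Second, your interpolation $\mu(s)$ implicitly uses $\mu_0$ and $\mu_{m+1}$ on the first and last subintervals; these should be declared to be $0$ since $T_0=0$ and $T_{m+1}=T$ are fixed.  Third, when $\alpha(T_j)\in\pa M$ the ambient exponential map may send you out of $M$; you should say explicitly (as the paper does) that the metric is smoothly extended across the boundary so that $\exp$ makes sense, after which your sign argument near $T_j(\ep)$ confirms that the corrected path stays in $M$.
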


\begin{remark}
\eqref{eq:muadm-cont} can be rewritten as
\[\jump Z(T_i) = -\mu_i\jump\dot\alpha(T_i).\]
Moreover, at reflection times $T_i\in\Rt$, the two conditions \eqref{eq:muadm-cont} and \eqref{eq:muadm-ref} can be rephrased as coupled jump conditions on the tangential and normal components, via
\[Z_\nrml(T_i\pm) = -\mu_i\dot\alpha_\nrml(T_i\pm),\quad \jump Z_{\tgt}(T_i) = -\mu_i\jump\dot\alpha_{\tgt}(T_i).\]
Finally, given the corresponding endpoint conditions, we may obtain variations $\alpha(t,\ep) \in \Omega_0(M; p,p')$ or $\Omega_{\per}(M)$ from conditions 
\eqref{eq:muadm-cont} and \eqref{eq:muadm-ref}	as in the lemma.
\end{remark}
	
\begin{proof}
We construct such a variation explicitly. The idea is that we consider \emph{some} variation whose derivative is $Z$, and then correct for reflection/kink conditions.

For each $T_i\in\Rt\cup\Kt$, choose a neighborhood $I_i$ in $t$ such that $I_i\cap(\Rt\cup\Kt) = \{T_i\}$, and $\alpha(I_i)$ is contained
in an open set trivializable by local coordinates, where if $T_i\in\Rt$, then the local coordinates are Fermi coordinates oriented
  so that
  $\alpha_1 \geq 0$ for $t \in I_i$. Let $I = \cup I_i$. For $t\in[0,T]\backslash I$, we define
\[\alpha(t,\ep) := \exp_{\alpha(t)}(\ep Z(t)),\]
where $\exp$ is the exponential map with respect to some Riemannian
metric (e.g.\ the metric $g$ on $M$), smoothly extended across
  the boundary in the case where $Y=\pa M$ locally.
Note that this is well-defined and smooth on $([0,T]\backslash I)\times(-\ep_0,\ep_0)$ for sufficiently small $\ep_0>0$. Moreover,
\[\frac{\partial\alpha}{\partial\ep}\rvert_{\ep=0}(t) = Z(t)\text{ on }[0,T]\backslash I.\]
For $t\in I_i$, $1\le i\le m$, we define $\alpha$ via local coordinates $(x^1,\dots,x^n)$ (which are local Fermi coordinates if $T_i\in\Rt$), i.e.\ we define the values of $\alpha_j(t,\ep) = x^j\circ\alpha(t,\ep)$ for $1\le j\le n$. Write $\alpha^-=\alpha|_{(T_{i-1},T_i)}$ and $\alpha^+=\alpha|_{(T_i,T_{i+1})}$, define $Z^{\pm}$ similarly, and extend $\alpha^{\pm}$, $Z^{\pm}$ smoothly in a neighborhood of $T_i$. Set $T_i(\ep) = T_i + \ep\mu_i$ so that $T_i'(0)=\mu_i$, and for $t\in I_i$, let
\[\alpha_j(t,\ep) = \begin{cases} (\exp_{\alpha^-(t)}(\epsilon
    Z^-(t)))_j + r_j^-(\ep)\varphi(t) & t\le T_i(\ep) \\
    (\exp_{\alpha^+(t)}(\epsilon Z^{+}(t)))_j + r_j^+(\ep)\varphi(t) &
    t\ge T_i(\ep)\end{cases}\] 
where $\varphi(t)\in C_c^\infty(I_i)$ is identically equal to $1$ in a small neighborhood of $T_i$. The vector-valued functions $r^{\pm}(\ep)$ are defined depending on whether $T_i\in\Rt$ or $T_i\in\Kt$: if $T_i\in\Rt$, set
\begin{align*}
r_1^-(\ep) &= -(\exp_{\alpha^-(T_i(\ep))}(\epsilon Z^-(T_i(\ep))))_1, \quad 
r_j^-(\ep) = 0 \text{ for }j\ge 2, \\
r_1^+(\ep) &= -(\exp_{\alpha^+(T_i(\ep))}(\ep Z^+(T_i(\ep))))_1, \\
r_j^+(\ep) &= (\exp_{\alpha^-(T_i(\ep))}(\ep Z^-(T_i(\ep))))_j - (\exp_{\alpha^+(T_i(\ep))}(\ep Z^+(T_i(\ep))))_j\text{ for }j\ge 2.
\end{align*}
If $T_i\in\Kt$, set $r^-(\ep) = 0$ and 
\[r_j^+(\ep) = (\exp_{\alpha^-(T_i(\ep))}(\ep Z^-(T_i(\ep))))_j - (\exp_{\alpha^+(T_i(\ep))}(\ep Z^+(T_i(\ep))))_j\text{ for }1\le j\le n.\]

We first verify that this construction produces paths
$\alpha(\cdot,\ep)$ which are \reflective paths for $\ep$ sufficiently
small and that the construction makes sense in the case $Y=\pa
  M$, i.e., that the constructed family of paths stays in $M$ rather than passing
  into an extension across the boundary. By construction, $\alpha(\cdot,\ep)$ is continuous on $I_i$ and
is smooth on $I_i\backslash\{T_i(\ep)\}$. Furthermore, if $T_i\in\Rt$,
then taking $\supp \varphi$ sufficiently small, nonnegativity of
$\alpha_1$ and nontangency allow us to ensure
$$
\sgn \dot \alpha_1^\pm=\pm 1 \quad \text{ on } \supp \varphi.
$$
Then taking $\ep>0$ sufficiently small ensures that the same holds for
the varied path, i.e., 
$$
\sgn \dot \alpha_1^\pm(t,\ep)=\pm 1 \quad \text{ on } \supp \varphi.
$$
For small $\ep$ it is also the case that $\alpha_1 \neq 0$ for $t \in I_i\backslash  \supp
\varphi$.  Since $\alpha_1^\pm (T_i(\ep), \ep)=0$ by construction, this
shows that $\alpha_1^\pm$ remains nonnegative for $t\in I_i$, and vanishes only
at $t=T_i(\ep).$  In particular, then, the family of paths
  remains in $M$ even when $Y=\pa M$ locally.
Finally, we have
\[\dot\alpha_1^{\pm}(T_i(\ep),\ep) = \dot\alpha_1^{\pm}(T_i(\ep)) + \ep Z_1^{\pm}(T_i(\ep)) + r_1^{\pm}(\ep)\varphi'(T_i(\ep)) + O(\ep^2),\]
and this is nonzero for $\ep$ sufficiently small since $\dot\alpha_1^{\pm}(T_i)\ne 0$ and $r_1^{\pm}(\ep)\to 0$ as $\ep\to 0$. Hence, $\alpha(t,\ep)$ intersects $Y$ transversely at $t=T_i(\ep)$.
Thus, $\alpha(t,\ep)$ is a family of reflected paths.

Finally, need to check that the first derivative of $\alpha$ is in
fact $Z$.  We clearly have $\frac{\partial\alpha}{\partial\ep}\rvert_{\ep=0} = Z$ on $[0,T]\backslash I$, while on $I_i$ we have
\[\frac{\partial\alpha}{\partial\ep}\rvert_{\ep=0} = \begin{cases} Z^-(t) + (r^-)'(0)\varphi(t) & t\in I_i, t<T_i \\ Z^+(t) + (r^+)'(0)\varphi(t) & t\in I_i, t>T_i\end{cases}.\]
Thus it suffices to show that $(r^{\pm})'(0) = 0$. If $T_i\in\Rt$, we have
\[(r_1^{\pm})'(0) = -(\mu_i\dot\alpha_1^{\pm}(T_0) + Z_1^{\pm}(T_0)) = 0,\]
with the last equality following from \eqref{eq:muadm-ref}, while for $j\ge 2$, by \eqref{eq:muadm-cont} we have
\[(r_j^-)'(0)=0,\quad (r_j^+)'(0) = (\mu_i(\dot\alpha_j^-(T_0)-\dot\alpha_j^+(T_0)) + (Z_j^-(T_0)-Z_j^+(T_0))) = 0.\]
It follows that $(r^{\pm})'(0)=0$, as desired. Similar calculations show that $(r^{\pm})'(0)=0$ in the case that $T_i\in\Kt$ as well.
\end{proof}
\begin{remark}
An analogous statement and proof holds for constructing two-parameter family of variations with admissible pairs of variation vector fields $Z$ and $W$ (say satisfying \eqref{eq:muadm-cont} and \eqref{eq:muadm-ref} with $\mu_i$, $\nu_i$), by defining $T_i(\epsilon,\delta)= T_i+\epsilon\mu_i+\delta\nu_i$, $\alpha(t,\epsilon,\delta) = \exp_{\alpha(t)}(\epsilon Z(t)+\delta W(t))$ away from reflections and kinks, and correcting analogously near the reflections/kinks. We omit the proof for brevity.
\end{remark}

\begin{corollary}
\label{cor:suff}
For any choice of $V_i\in T_{\alpha(T_i)}M$ such that $V_i\in T_{\alpha(T_i)}Y$ when $T_i\in\Rt$, there exists a variation $\alpha(t,\ep)\in\Omega(M)$ such that $\frac{\partial\alpha}{\partial\ep}\rvert_{\ep=0}$ is continuous for all $t$, with
\[\frac{\partial\alpha}{\partial\ep}\rvert_{\ep=0}(T_i) = V_i.\]
Moreover, such a variation can be chosen so that the corresponding reflection times $T_i(\ep)$ satisfy $T_i'(0)=0$.
\end{corollary}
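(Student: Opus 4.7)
The plan is to reduce directly to Lemma~\ref{lemma:integration} by choosing $\mu_i=0$ for every $i=1,\dots,m$ and then building an admissible vector field $Z$ along $\alpha$ that takes the prescribed values $V_i$ at the times $T_i$. With $\mu_i=0$, the admissibility conditions \eqref{eq:muadm-cont} and \eqref{eq:muadm-ref} collapse to the statement that $Z$ is continuous at every $T_i$ and that $Z_\nrml(T_i\pm)=0$ whenever $T_i\in\Rt$. The hypothesis $V_i\in T_{\alpha(T_i)}Y$ at reflection times is precisely the compatibility needed so that prescribing $Z(T_i)=V_i$ is consistent with the vanishing of the normal component there.

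First I would produce such a $Z$ by interpolating on each smooth piece of $\alpha$. On each subinterval $[T_i,T_{i+1}]$, let $P_t$ denote parallel transport (with respect to $g$) along $\alpha$ from $T_i$ to $t$, and let $\chi\in C^\infty([T_i,T_{i+1}])$ satisfy $\chi(T_i)=1$, $\chi(T_{i+1})=0$. Setting
\[
Z(t):=\chi(t)\,P_tV_i+(1-\chi(t))\,P_t\bigl(P_{T_{i+1}}^{-1}V_{i+1}\bigr)
\]
gives a vector field along $\alpha|_{[T_i,T_{i+1}]}$ which is smooth up to the endpoints and equals $V_i,V_{i+1}$ there. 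For $i=0$ and $i=m$ we are free to pick any auxiliary initial/terminal vectors (e.g.\ zero) to seed the construction at $t=0$ and $t=T$. By construction the resulting piecewise smooth $Z$ satisfies $Z(T_i-)=Z(T_i+)=V_i$ for every $i$, and at a reflection time $T_i\in\Rt$ the tangency hypothesis on $V_i$ gives $Z_\nrml(T_i\pm)=0$.

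Having arranged $Z$ and $\mu_i=0$, I would invoke Lemma~\ref{lemma:integration} to produce a variation $\alpha(t,\ep)\in\Omega(M)$ with $\partial\alpha/\partial\ep|_{\ep=0}=Z$ and $T_i'(0)=\mu_i=0$, yielding both assertions of the corollary simultaneously. No genuine obstacle arises; the only conceptual point is to recognize that the tangency assumption on the $V_i$ at reflection times is exactly what reconciles the continuity of $Z$ with the forced vanishing of its normal component at those times, allowing the choice $\mu_i=0$ to be admissible.
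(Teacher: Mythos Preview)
Your proposal is correct and follows essentially the same approach as the paper: set all $\mu_i=0$, construct a continuous (piecewise smooth) vector field $Z$ along $\alpha$ with $Z(T_i)=V_i$, observe that the tangency hypothesis at reflection times makes conditions \eqref{eq:muadm-cont} and \eqref{eq:muadm-ref} hold, and invoke Lemma~\ref{lemma:integration}. The paper's version is simply terser---it just says ``let $Z$ be any smooth vector field with $Z(T_i)=V_i$''---whereas you spell out an explicit parallel-transport interpolation; the content is the same.
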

\begin{proof}
Let $Z$ be \emph{any} smooth vector field with $Z(T_i)=V_i$. Then $Z$ satisfies \eqref{eq:muadm-cont} and \eqref{eq:muadm-ref} with all $\mu_i$ equal to $0$.
\end{proof}
\begin{corollary}
\label{cor:sufft}
Let $\mu_1,\dots,\mu_m$ be any collection of numbers. Then there exists a variation $\alpha(t,\ep)\in\Omega(M)$ where the corresponding reflection times $T_i(\ep)$ satisfy $T_i'(0) = \mu_i$.
\end{corollary}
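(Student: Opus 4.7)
The plan is to reduce the corollary to a direct application of Lemma~\ref{lemma:integration} by choosing a variation vector field $Z$ along $\alpha$ that makes both sides of the jump conditions \eqref{eq:muadm-cont} and \eqref{eq:muadm-ref} vanish identically. The key observation is that if $Z(T_i\pm) = -\mu_i\dot\alpha(T_i\pm)$, then
\[\mu_i\dot\alpha(T_i\pm) + Z(T_i\pm) = 0,\]
which automatically satisfies \eqref{eq:muadm-cont}, and taking normal components gives \eqref{eq:muadm-ref} whenever $T_i\in\Rt$. So the task reduces to producing a single vector field $Z$ along $\alpha$, smooth on each subinterval $[T_i,T_{i+1}]$, which equals $-\mu_i\dot\alpha$ near each $T_i$.

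First I would choose smooth cutoff functions $\psi_i\in C_c^\infty((T_{i-1},T_{i+1}))$ with pairwise disjoint supports and with $\psi_i\equiv 1$ in some neighborhood of $T_i$. Then I would define
\[Z(t) = -\sum_{i=1}^m \mu_i\,\psi_i(t)\,\dot\alpha(t)\]
on $[0,T]\setminus(\Rt\cup\Kt)$. Since $\dot\alpha$ restricted to any open subinterval $(T_i,T_{i+1})$ extends smoothly to the closed subinterval $[T_i,T_{i+1}]$ (by the definition of a \reflective path), and $\psi_i$ is smooth, the same holds for $Z$. At each $T_i$ the one-sided limits are $Z(T_i\pm) = -\mu_i\dot\alpha(T_i\pm)$, so both sides of \eqref{eq:muadm-cont} equal zero, and when $T_i\in\Rt$ both sides of \eqref{eq:muadm-ref} likewise vanish by taking normal components.

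Finally I would apply Lemma~\ref{lemma:integration} directly to this $Z$ and the given $\mu_1,\dots,\mu_m$, which produces the desired variation $\alpha(t,\ep)\in\Omega(M)$ with $T_i'(0) = \mu_i$. There is no real obstacle here; the only subtlety is noticing that the natural ansatz $Z = -\mu_i\dot\alpha$ (localized via cutoffs) trivializes both jump conditions, so no further geometric adjustments are required.
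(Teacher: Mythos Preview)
Your proof is correct and follows essentially the same approach as the paper: construct a vector field $Z$ satisfying the jump conditions \eqref{eq:muadm-cont} and \eqref{eq:muadm-ref} with the prescribed $\mu_i$, then invoke Lemma~\ref{lemma:integration}. The paper's construction allows slightly more freedom (taking $Z_\tgt(T_i-)$ arbitrary at reflection times and $Z(T_i-)$ arbitrary at kink times, with $Z(T_i+)$ determined by the jump), whereas your explicit choice $Z(T_i\pm)=-\mu_i\dot\alpha(T_i\pm)$ via cutoffs is the particular case where both sides of the jump conditions vanish; this is a cosmetic difference, not a substantive one.
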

\begin{proof}
Let $Z$ be smooth on $[0,T]\backslash(\Rt\cup\Kt)$ with limits at $T_i\in\Rt$ determined by
\begin{align*}
Z_\nrml(T_i-) &= -\mu_i\dot\alpha_\nrml(T_i-), \quad  Z_\tgt(T_i-)\text{ arbitrary},\\
Z(T_i+) &= Z(T_i-) - \mu_i(\dot\alpha(T_i+)-\dot\alpha(T_i-)),
\end{align*}
and with limits at $T_i\in\Kt$ determined by
\[Z(T_i-)\text{ arbitrary},\quad Z(T_i+) = Z(T_i-) - \mu_i(\dot\alpha(T_i+)-\dot\alpha(T_i-)).\]
Then $Z$ satisfies \eqref{eq:muadm-cont} and \eqref{eq:muadm-ref} with the prescribed values of $\mu_i$.
\end{proof}
\begin{corollary}
\label{cor:physz}
Let $\alpha(t)$ be a \reflective\phys. Then a piecewise smooth vector field $Z$ along $\alpha$ is a variation vector field if and only if:
\begin{itemize}
\item $Z$ is smooth on $[0,T]\backslash(\Rt\cup\Kt)$,
\item At reflection times $T_i\in\Rt$, we have
\[\jump Z_\tgt(T_i) = 0,\quad \ol Z_\nrml(T_i) = 0\]
(i.e.\ $\jump Z$, resp.\ $\ol Z$, is normal, resp.\ tangent, to the hypersurface).
\item At kink times $T_i\in\Kt$, we have
\[\jump Z(T_i) = 0\]
(i.e.\ $Z$ is continuous at kink times).
\end{itemize}
\end{corollary}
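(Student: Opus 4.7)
The plan is to deduce each implication directly from Lemmas \ref{lemma:tangentspace} and \ref{lemma:integration}, specialized using the physical-path identities of Definition \ref{def:path.global}: at $T_i\in\Kt$ one has $\jump\dot\alpha(T_i)=0$, and at $T_i\in\Rt$ one has $\ol{\dot\alpha_\nrml}(T_i)=0$ together with $\jump\dot\alpha_\tgt(T_i)=0$. These identities collapse the general admissibility conditions on variations into the simpler symmetric form stated in the corollary.

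For the forward direction, I would invoke Lemma \ref{lemma:tangentspace} to produce scalars $\mu_i=T_i'(0)$ for which \eqref{eq:adm-cont} holds at every $T_i$ and \eqref{eq:adm-ref} at each $T_i\in\Rt$, and then substitute the physical-path identities. At a kink, \eqref{eq:adm-cont} collapses to $\jump Z(T_i)=-\mu_i\jump\dot\alpha(T_i)=0$; at a reflection, averaging \eqref{eq:adm-ref} at $T_i\pm$ yields $\ol Z_\nrml(T_i)=-\mu_i\ol{\dot\alpha_\nrml}(T_i)=0$, while the tangential part of \eqref{eq:adm-cont} gives $\jump Z_\tgt(T_i)=-\mu_i\jump\dot\alpha_\tgt(T_i)=0$.

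For the converse, the plan is to construct scalars $\mu_i$ making $(Z,\{\mu_i\})$ satisfy \eqref{eq:muadm-cont}--\eqref{eq:muadm-ref}, and then to appeal to Lemma \ref{lemma:integration} for the variation. At kink times both relevant jumps already vanish, so $\mu_i=0$ works. At a reflection $T_i\in\Rt$, non-tangency (Definition \ref{def:path-n}) guarantees $\dot\alpha_\nrml(T_i-)\ne 0$, and I would define $\mu_i$ as the unique scalar with $Z_\nrml(T_i-)=-\mu_i\dot\alpha_\nrml(T_i-)$; the identity $\dot\alpha_\nrml(T_i+)=-\dot\alpha_\nrml(T_i-)$ then turns the analogous normal equation at $T_i+$ into a consistency check equivalent to $\ol Z_\nrml(T_i)=0$. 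The tangential part of \eqref{eq:muadm-cont} collapses to $\jump Z_\tgt(T_i)=-\mu_i\jump\dot\alpha_\tgt(T_i)=0$ since both factors on the right vanish by hypothesis and by the physical-path condition respectively.

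The only step that deserves the name ``obstacle'' is this consistency check at reflection times: the normal condition at $T_i+$ is not a priori implied by the one at $T_i-$, and $\ol Z_\nrml(T_i)=0$ is precisely the hypothesis that reconciles the two once the physical-path symmetry $\ol{\dot\alpha_\nrml}(T_i)=0$ is used. Every other step is a direct substitution or a trivial degree-of-freedom count, which is why the corollary should be phrased as a clean packaging of Lemmas \ref{lemma:tangentspace} and \ref{lemma:integration} rather than as an independent result.
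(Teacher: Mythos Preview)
Your proposal is correct and follows essentially the same approach as the paper: both deduce the corollary by specializing the admissibility conditions of Lemmas~\ref{lemma:tangentspace} and~\ref{lemma:integration} using the physical-path identities $\ol{\dot\alpha_\nrml(T_i)}=0$, $\jump\dot\alpha_\tgt(T_i)=0$ at $T_i\in\Rt$ and $\jump\dot\alpha(T_i)=0$ at $T_i\in\Kt$. The paper's proof is a single sentence to this effect, while you have helpfully made the construction of the $\mu_i$ and the consistency check at reflections explicit.
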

\begin{proof}
This follows by rewriting the conditions in Lemmas \ref{lemma:tangentspace} and \ref{lemma:integration}, using that in a \reflective\phys we have the conditions $\ol{\dot\alpha_\nrml (T_i)} = 0$ and $\jump\dot\alpha_\tgt (T_i) = 0$ if $T_i\in\Rt$ and $\jump\dot\alpha(T_i)=0$ if $T_i\in\Kt$.
\end{proof}

Let $\alpha$ be a \reflective path, with reflection times $\Rt$. By
Remark \ref{rem:pathrem}, $\alpha$ admits a minimal kink time set
$\Kt_{min}$. 
Let $\mathcal{V}(\alpha)$ denote the set of vector fields along $\alpha$. By Lemmas~\ref{lemma:tangentspace} and \ref{lemma:integration}, we may identify the tangent spaces
 to our various path spaces as follows:
\begin{align*}
T_\alpha \Omega(M)& = \left\{W\in\mathcal{V}(\alpha): \exists~\Kt\supset\Kt_{min} \text{ s.t. } W \text{ is smooth on $[0,T]\backslash(\Rt\cup\Kt)$}\right.\\
&\left.\text{satisfying jump conditions \eqref{eq:muadm-cont} and \eqref{eq:muadm-ref} at }\Rt\cup\Kt\right\},\\
T_\alpha\Omega_0(M;p,p') & = \{W\in T_\alpha\Omega(M): W(0) =
                           W(T)=0\}\\
  T_\alpha\Omega_{\per}(M) & = \{W\in T_\alpha\Omega(M): W(0) = W(T)\}.
\end{align*}
If $\alpha$ is a periodic \reflective path with $\alpha(0)=\alpha(T)=p$, then
\[T_\alpha\Omega_0(M;p,p)\subset T_\alpha\Omega_{\per}(M) \subset T_\alpha\Omega(M),\]
while if $\alpha$ is a (\reflective) path from $p$ to $p'$ (with $p$ not necessarily equal to $p'$) then
\[T_\alpha\Omega_0(M;p,p')\subset T_\alpha\Omega(M).\]
\begin{remark}
    Note that the definition of $T_\alpha \Omega(M)$
    allows for freedom in the choice of $\Kt$: in addition to points on $\alpha$ that fail to be $\Ct$, which by definition occur at times in $\Kt_{min} \cup \Rt$, we may always add ``fictitious''
    extra kink points (i.e.\ $\Kt\backslash\Kt_{min}$) where we do not enforce continuity of
    derivatives of variation vector fields; note at those points that $\alpha$ is smooth, and hence the jump condition at those points reduces to the condition of continuity.
		Consequently an authentic
    kink is allowed to develop in the varied paths at such points.
		
		Moreover, the space $T_\alpha\Omega(M)$ (and hence $T_\alpha\Omega_0(M;p,p')$ and $T_\alpha\Omega_{\per}(M)$) is in fact a vector space. Indeed, given $Z_1, Z_2\in T_\alpha\Omega(M)$ with kink times $\Kt_1$ and $\Kt_2$, we may view both of them as having kink times at $\Kt_1\cup\Kt_2$ by adding ``fictitious'' kink points, and hence it is clear that $Z_1+Z_2$ satisfies the jump conditions \eqref{eq:muadm-cont} and \eqref{eq:muadm-ref} at $\Rt\cup(\Kt_1\cup\Kt_2)$.
  \end{remark}

\section{Actions along paths and first variations}
\label{sec:1var}

In the following sections, we shall consider the action functional on a family of
\reflective paths $\{\alpha(\cdot,\epsilon):
\ep\in(-\ep_0,\ep_0)\}\subset \Omega_0(M;p,p')$. 
The action functional is given by

\begin{equation}
\label{functional}
\begin{aligned}
J[\alpha(\cdot,\epsilon)] &= \int_0^T{\frac{1}{2}|\dot \alpha(t,\epsilon)|_g^2-V(\alpha(t,\epsilon))\,dt} \\
&:= \sum_{i=0}^m \int_{T_i(\ep)}^{T_{i+1}(\ep)}{\frac{1}{2}|\dot \alpha(t,\epsilon)|_g^2-V(\alpha(t,\epsilon))\,dt},
\end{aligned}
\end{equation}
where as usual we take $T_0(\ep) = 0$ and $T_{m+1}(\ep) = T$.

\subsection{First variations}
The following lemma gives the derivative of the actions \eqref{functional}.
\begin{lemma}
    The derivative $\frac{d}{d\epsilon}J[\alpha(\cdot,\epsilon)]$ is given by 
    \begin{equation}
    \label{eq:var1_broken}
-\int_0^T \bigang{\cov\dot \alpha +\nabla V(\alpha), \frac{\partial \alpha}{\partial\epsilon}(t,\ep)}\,dt + \sum_{i=1}^m \bigang{-\jump\dot \alpha(T_i(\ep),\epsilon),\ol{\frac{\partial \alpha}{\partial\epsilon}}(T_i(\ep),\ep)}.
\end{equation}
\end{lemma}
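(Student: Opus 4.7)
The strategy is a differentiation-under-the-integral/Leibniz argument on each smooth subinterval, followed by integration by parts, with the key algebraic step being a clean rewriting of the interface terms using the jump condition \eqref{eq:adm-cont} for the variation vector field.

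\textbf{Step 1: Leibniz differentiation on each subinterval.} Write $Z(t,\ep) = \partial\alpha/\partial\ep$ and differentiate
\[J[\alpha(\cdot,\ep)] = \sum_{i=0}^m\int_{T_i(\ep)}^{T_{i+1}(\ep)}\!\!L(\alpha,\dot\alpha)\,dt\]
in $\ep$. Each summand contributes two boundary terms of the form $L(T_{i+1}(\ep)-)T_{i+1}'(\ep)$ and $-L(T_i(\ep)+)T_i'(\ep)$. Regrouping by interior time $T_i$ (with $1\le i\le m$, since $T_0'=T_{m+1}'=0$), the boundary pieces sum to $-\sum_{i=1}^m T_i'(\ep)\,\jump L(T_i(\ep),\ep)$.

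\textbf{Step 2: Integrand derivative and integration by parts.} On the interior of each smooth subinterval, using compatibility of $\cov$ with the metric and $[\cov,\partial_\ep]=0$,
\[\frac{\partial}{\partial\ep}\!\left(\tfrac12|\dot\alpha|_g^2 - V(\alpha)\right) = \langle \cov Z,\dot\alpha\rangle - \langle\nabla V,Z\rangle.\]
Integration by parts on $[T_i(\ep),T_{i+1}(\ep)]$ replaces $\langle\cov Z,\dot\alpha\rangle$ by $-\langle Z,\cov\dot\alpha\rangle$ plus boundary terms $\langle Z,\dot\alpha\rangle$ at the endpoints. The interior terms combine to give $-\int_0^T\langle \cov\dot\alpha + \nabla V,Z\rangle\,dt$, and the boundary terms (after using $Z(0,\ep)=Z(T,\ep)=0$, which holds in $\Omega_0(M;p,p')$) contribute $-\sum_{i=1}^m\jump\langle Z,\dot\alpha\rangle(T_i(\ep),\ep)$.

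\textbf{Step 3: Simplifying the interface terms.} Collecting all boundary contributions at each $T_i = T_i(\ep)$, we must show
\[-T_i'\,\jump L\; -\; \jump\langle Z,\dot\alpha\rangle \;=\; \langle -\jump\dot\alpha,\ol{Z}\rangle.\]
Two ingredients enter. First, since $\alpha$ is continuous and $V\in\mathcal{C}^1(M)$, the $V$ term in $L$ is continuous at $T_i$, so $\jump L = \tfrac12(|\dot\alpha^+|^2 - |\dot\alpha^-|^2) = \langle \jump\dot\alpha,\ol{\dot\alpha}\rangle$ by the product-jump identity $\jump(ab) = \jump a\,\bar b + \bar a\,\jump b$. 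Second, by the same identity, $\jump\langle Z,\dot\alpha\rangle = \langle \jump Z,\ol{\dot\alpha}\rangle + \langle \ol{Z},\jump\dot\alpha\rangle$. Now \eqref{eq:adm-cont} (valid at all $T_i$, not only reflection times) gives precisely $\jump Z = -T_i'\,\jump\dot\alpha$. Substituting yields
\[-T_i'\langle\jump\dot\alpha,\ol{\dot\alpha}\rangle + T_i'\langle\jump\dot\alpha,\ol{\dot\alpha}\rangle - \langle\ol{Z},\jump\dot\alpha\rangle = \langle -\jump\dot\alpha,\ol{Z}\rangle,\]
which is the desired interface contribution.

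\textbf{Expected obstacle.} Mechanically, no step is deep; the only delicate point is the cancellation in Step 3, which requires recognizing that the $T_i'$-weighted $\jump L$ produced by the moving integration limits and the $T_i'$-component of $\jump Z$ produced by the continuity condition on $\alpha$ precisely annihilate each other. It is worth emphasizing that this cancellation uses only \eqref{eq:adm-cont} and continuity of $V$ across $Y$; in particular, no use is made of the reflection condition \eqref{eq:adm-ref} or of $Z$ being tangent to $Y$, which is why the first variation formula is valid uniformly at reflections and kinks.
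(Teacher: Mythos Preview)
Your proof is correct and follows essentially the same route as the paper: Leibniz differentiation on each smooth subinterval, integration by parts, and then the interface cancellation using the product--jump identity together with $\jump Z=-T_i'\jump\dot\alpha$ from \eqref{eq:adm-cont}. The paper writes the polarization identity $\langle a,b\rangle-\langle c,d\rangle=\tfrac12\langle a+c,b-d\rangle+\tfrac12\langle a-c,b+d\rangle$ explicitly, while you use the equivalent $\jump(ab)=\jump a\,\bar b+\bar a\,\jump b$; otherwise the arguments coincide, including the observation that only continuity of $V$ and \eqref{eq:adm-cont} (not \eqref{eq:adm-ref}) are used.
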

\begin{proof}

For each term in the sum in the second line of equation \eqref{functional}, we differentiate in $\ep$ to obtain
\begin{align*}
&\frac{d}{d\ep}\left(\int_{T_i(\ep)}^{T_{i+1}(\ep)}{\frac{1}{2}|\dot \alpha(t,\epsilon)|_g^2-V(\alpha(t,\epsilon))\,dt}\right)
= \int_{T_i(\ep)}^{T_{i+1}(\ep)}\bigang{\dot \alpha,\cov\frac{\partial{ \alpha}}{\partial\epsilon}} - \bigang{\nabla V(\alpha),\frac{\partial{\alpha}}{\partial\epsilon}}\,dt \\
&+T_{i+1}'(\ep)\left(\frac{1}{2}|\dot\alpha(T_{i+1}(\ep)-,\ep)|^2_g - V(\alpha(T_{i+1}(\ep),\ep))\right)-T_i'(\ep)\left(\frac{1}{2}|\dot\alpha(T_i(\ep)+,\ep)|^2_g - V(\alpha(T_i(\ep),\ep))\right),
\end{align*}
where the integral term follows using the metric compatibility and the torsion-free property of the Levi-Civita connection (the latter yields $\frac{\partial}{\partial\ep}\dot\alpha=\cov\frac{\partial\alpha}{\partial\ep}$); note that we can evaluate $\alpha$ at $T_i(\ep)$ since $\alpha$ is continuous. We can then use the metric compatibility of $\nabla$ to integrate by parts, yielding
\begin{align*}
&\int_{T_i(\ep)}^{T_{i+1}(\ep)}\bigang{\dot \alpha,\cov\frac{\partial{ \alpha}}{\partial\epsilon}} - \bigang{\nabla V(\alpha),\frac{\partial{\alpha}}{\partial\epsilon}}\,dt \\
&= -\int_{T_i(\ep)}^{T_{i+1}(\ep)}{\bigang{\cov \dot \alpha+\nabla V(\alpha),\frac{\partial \alpha}{\partial\epsilon}}\,dt} + \left[\bigang{\dot\alpha,\frac{\partial\alpha}{\partial\ep}}\right]\Big\rvert_{T_i(\ep)+}^{T_{i+1}(\ep)-}.
\end{align*}
Adding the terms together, noting that the boundary terms involving $V$ cancel, $T_0'(\ep) = 0 = T_{m+1}'(0)$, and that $\frac{\partial\alpha}{\partial\ep}$ vanishes at $t=0,T$ by assumption of fixed endpoints, we obtain
\begin{equation}
\label{1var}
\begin{aligned}
\frac{d}{d\epsilon}(J[\alpha(\cdot,\epsilon)]) &= -\int_0^T{\bigang{\cov \dot \alpha+\nabla V(\alpha),\frac{\partial \alpha}{\partial\epsilon}}\,dt} \\
&-\sum_{i=1}^m \left(\frac{1}{2}T_i'(\ep)\jump\left(|\dot\alpha|^2_g\right) + \jump\left(\bigang{\dot\alpha,\frac{\partial\alpha}{\partial\ep}}\right)\right)\Big\rvert_{t=T_i(\ep)}.
\end{aligned}
\end{equation}
We now rewrite each term in the second line of \eqref{1var}. Fix $1\le i\le m$, and for convenience, let $\dot\alpha^{\pm}$ and $\frac{\partial\alpha}{\partial\ep}^{\pm}$ denote the values of each quantity at $T_i(\ep)\pm$. 
Using the algebraic identity $\langle a,b\rangle-\langle c,d\rangle = \frac{1}{2}\langle a+c,b-d\rangle + \frac{1}{2}\langle a-c,b+d\rangle$, 
we have
\begin{align*}
\jump\left(\bigang{\dot\alpha,\frac{\partial\alpha}{\partial\ep}}\right) &= \bigang{\dot\alpha^+,\frac{\partial\alpha}{\partial\ep}^+} - \bigang{\dot\alpha^-,\frac{\partial\alpha}{\partial\ep}^-} \\
&=\frac{1}{2}\bigang{(\dot \alpha^+ + \dot \alpha^-),\left(\frac{\partial \alpha^+}{\partial\epsilon} - \frac{\partial \alpha^-}{\partial\epsilon}\right)} 
+\frac{1}{2}\bigang{(\dot \alpha^+-\dot \alpha^-),\left(\frac{\partial \alpha^+}{\partial\epsilon} + \frac{\partial \alpha^-}{\partial\epsilon}\right)} \\
&=\bigang{\ol{\dot\alpha},\jump\frac{\partial\alpha}{\partial\ep}} + \bigang{\jump\dot\alpha,\ol{\frac{\partial\alpha}{\partial\ep}}}.
\end{align*}
Furthermore, from Lemma \ref{lemma:tangentspace} we have
\
\begin{equation}
\label{eq:relation}
T_i'(\ep)\dot \alpha^- + \frac{\partial \alpha^-}{\partial\epsilon} = T_i'(\ep)\dot \alpha^+ + \frac{\partial \alpha^+}{\partial\epsilon}\implies \jump\frac{\partial\alpha}{\partial\ep} = -T_i'(\ep)\jump\dot\alpha.
\end{equation}
Thus
\[\bigang{\ol{\dot\alpha},\jump\frac{\partial\alpha}{\partial\ep}} = -T_i'(\ep)\bigang{\ol{\dot\alpha},\jump\dot\alpha} = -\frac{1}{2}T_i'(\ep)\jump(|\dot\alpha|^2_g),\]
and hence
\begin{equation}
    \label{eq:sub_idd}
\jump\left(\bigang{\dot\alpha,\frac{\partial\alpha}{\partial\ep}}\right) = \bigang{\ol{\dot\alpha},\jump\frac{\partial\alpha}{\partial\ep}} + \bigang{\jump\dot\alpha,\ol{\frac{\partial\alpha}{\partial\ep}}} = -\frac{1}{2}T_i'(\ep)\jump(|\dot\alpha|^2_g) + \bigang{\jump\dot\alpha,\ol{\frac{\partial\alpha}{\partial\ep}}}.
\end{equation}
Substituting equation \eqref{eq:sub_idd} into equation \eqref{1var} yields \eqref{eq:var1_broken}, as desired.
\end{proof}

\subsection{Critical points of first variations}
We prove the following lemma in this subsection:
\begin{lemma}
    \label{lemma:reflective_path}
For any variation $\alpha(t,\epsilon)\in\Omega_0(M;p,p')$ with
$\alpha(t,0) = \alpha(t)$, $\alpha(t)$ is a critical point of
$J[\alpha(\cdot,\ep)]$, in the sense that
$\frac{d}{d\epsilon}\Big|_{\epsilon=0}(J[\alpha(\cdot,\epsilon)]) = 0$
holds, if and only if $\alpha(t)$ is a \reflective \phys.
\end{lemma}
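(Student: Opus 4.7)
The plan is to apply the first variation formula \eqref{eq:var1_broken} evaluated at $\ep=0$, with $Z := \partial\alpha/\partial\ep\rvert_{\ep=0}$, and then to exhaust the three defining conditions of a \reflective\ \phys by testing against suitably chosen variations provided by Corollaries \ref{cor:suff} and \ref{cor:sufft}.

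For the sufficiency direction, assume $\alpha$ is a \reflective\ \phys. The bulk integral in \eqref{eq:var1_broken} vanishes by condition \ref{geodesicitem} of Definition \ref{def:path.global}. At each kink time $T_i\in\Kt$, the boundary term vanishes since $\jump\dot\alpha(T_i)=0$. At each reflection time $T_i\in\Rt$, condition \ref{reflectionitem} gives $\jump\dot\alpha_\tgt(T_i)=0$, so $\jump\dot\alpha(T_i)$ is purely normal; meanwhile the admissibility relation \eqref{eq:adm-ref} combined with $\ol{\dot\alpha_\nrml}(T_i)=0$ forces $\ol{Z}_\nrml(T_i) = -T_i'(0)\,\ol{\dot\alpha_\nrml}(T_i) = 0$, so the pairing of a normal vector with $\ol Z$ also vanishes.

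For the necessity direction, I would proceed in three stages. First, testing against variations with $Z$ compactly supported in a single open smooth segment $(T_i, T_{i+1})$ and $T_j'(0)=0$ for all $j$, the boundary terms vanish, and the fundamental lemma of the calculus of variations yields $\cov\dot\alpha + \nabla V(\alpha) = 0$ on each smooth segment. Second, with the bulk term now always vanishing, the critical point condition reduces to $\sum_i \langle \jump\dot\alpha(T_i), \ol{Z}(T_i)\rangle = 0$ for every admissible $Z$. Using Corollary \ref{cor:suff}, test against continuous variation fields $Z$ with $T_j'(0)=0$ for all $j$, with $Z(T_j)=0$ for $j\ne i$ and at the endpoints, and with $Z(T_i)=V_i$, where $V_i$ ranges over $T_{\alpha(T_i)}M$ if $T_i \in \Kt$ and over $T_{\alpha(T_i)}Y$ if $T_i \in \Rt$. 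This isolates a single term and yields $\jump\dot\alpha(T_i)=0$ at kinks and $\jump\dot\alpha_\tgt(T_i)=0$ at reflections.

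The remaining step, obtaining $\ol{\dot\alpha_\nrml}(T_i)=0$ at each reflection time, is the main obstacle, as it is the only condition requiring a nontrivial time variation and it couples to the one-sidedness of $\alpha$ at $Y$. Here I would apply Corollary \ref{cor:sufft} with $T_i'(0)=\mu_i\ne 0$ at one chosen reflection time and all other $\mu_j=0$, using the free tangential data to arrange that $Z$ vanishes at every other $T_j$ and at the endpoints. The admissibility relation yields $\ol{Z}_\nrml(T_i) = -\mu_i\,\ol{\dot\alpha_\nrml}(T_i)$, and combined with the already-established fact that $\jump\dot\alpha(T_i)$ is normal, the sum collapses to $\mu_i\langle \jump\dot\alpha_\nrml(T_i), \ol{\dot\alpha_\nrml}(T_i)\rangle = 0$. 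Writing $\dot\alpha_\nrml(T_i\pm) = a_\pm N$, this reduces to $a_+^2 = a_-^2$; the non-tangency assumption together with the one-sidedness of $\alpha$ near $T_i$ force $a_+$ and $a_-$ to have strictly opposite signs, so $a_+ = -a_-$, i.e., $\ol{\dot\alpha_\nrml}(T_i)=0$, completing the proof.
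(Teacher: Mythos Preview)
Your proof is correct and follows essentially the same approach as the paper's: establish the interior equation by localizing $Z$ away from the $T_i$, then successively force the jump conditions at kinks, the tangential reflection condition, and finally the normal reflection condition by exploiting Corollaries~\ref{cor:suff} and~\ref{cor:sufft}; the sufficiency direction is identical. The only cosmetic difference is that you isolate one $T_i$ at a time, whereas the paper handles all kink times together, then all reflection times together, before passing to the normal component via the identity $\langle\jump\dot\alpha_\nrml,\ol{Z}_\nrml\rangle = -\tfrac12 T_i'(0)\jump(|\dot\alpha_\nrml|^2)$.
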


\begin{proof}
Suppose that
$\frac{d}{d\epsilon}\Big|_{\epsilon=0}(J[\alpha(\cdot,\epsilon)])=0$
for all variations in $\Omega_0(M;p,p')$ with $\alpha(t,0) =
\alpha(t)$. We first consider variations where $Z =\frac{\partial
  \alpha}{\partial\epsilon}\rvert_{\ep=0}$ vanishes at all times of reflection and kinks $t=T_i(0)\in\Rt\cup\Kt$, with the corresponding time functions satisfying $T_i'(0)=0$; such variations are possible by Corollary \ref{cor:suff}. Then all of the boundary terms in \eqref{eq:var1_broken} vanish, and we obtain
\[0 = \frac{d}{d\epsilon}\Big|_{\epsilon=0}(J[\alpha(\cdot,\epsilon)]) = -\int_0^T{\bigang{\cov\dot \alpha(t)+\nabla V(\alpha(t)),Z(t)}\,dt}.\]
We note that we can arrange for $Z(t)$ to be any arbitrary smooth vector-valued function which vanishes at $t=0$, $t=T$, and all $t=T_i\in\Rt\cup\Kt$, and such vector fields are dense in $L^2$. It follows that $\alpha(t)$ must satisfy
\begin{equation}
    \label{eq:ham_path}
    D_t\dot \alpha(t)+\nabla V(\alpha(t)) = 0\text{ on }[0,T]\backslash(\Rt\cup\Kt),
\end{equation}
i.e.\ condition \eqref{geodesicitem} in Definition \ref{def:path.global}. Hence, for a path $\alpha(t)$ where $J(\alpha(\cdot,\epsilon))$ is stationary at $\epsilon=0$, \eqref{eq:var1_broken} reduces to
\begin{equation}
\label{1var3}
\begin{split}
    0=\frac{d}{d\epsilon}\Big|_{\epsilon=0}(J[\alpha(\cdot,\epsilon)]) =
     \sum_{i=1}^m\bigang{-\jump\dot \alpha(T_i),\overline{Z}(T_i) }
\end{split}
\end{equation}
for all variations $\alpha(t,\epsilon)\in \Omega_0(M;p,p')$ with $\alpha(t,0)=\alpha(t)$.

We then consider variations $\alpha(t,\ep)$ such that $Z(T_i)=0$ and
$T_i'(0)=0$ for all reflection times
$T_i\in\Rt$, 
but $Z$ does not necessarily vanish at the kink times. For such 
variations, equation \eqref{1var3} becomes
\[   0=\frac{d}{d\epsilon}\Big|_{\epsilon=0}(J[\alpha(\cdot,\epsilon)]) =
\sum_{T_i\in\Kt}\bigang{-\jump\dot \alpha(T_i),\overline{Z}(T_i) }.\]
By Corollary \ref{cor:suff}, we may take $Z$ to be continuous, with arbitrary values at $T_i\in\Kt$. Thus  $\jump\dot \alpha(T_i) = 0$ for all $T_i\in\Kt$, which is condition \eqref{kinkitem} in Definition \ref{def:path.global}. It follows that, for any variation, equation \eqref{1var3} further reduces to 
\begin{equation}
\label{1var3-1}
\begin{split}
    0=\frac{d}{d\epsilon}\Big|_{\epsilon=0}(J[\alpha(\cdot,\epsilon)]) =
    \sum_{T_i\in\Rt}\bigang{-\jump\dot \alpha(T_i),\overline{Z}(T_i) }.
\end{split}
\end{equation}
Each term in the above sum can be rewritten as
\begin{align*}
\bigang{-\jump\dot \alpha(T_i),\overline{Z}(T_i) } &= \bigang{-\jump \dot \alpha_{\tgt}(T_i), \overline{Z_{\tgt}}(T_i)} + \bigang{-\jump \dot\alpha_{\nrml}(T_i), \overline{Z_{\nrml}}} \\
&=\bigang{-\jump \dot \alpha_{\tgt}(T_i), \overline{Z_{\tgt}}(T_i)} - \frac{1}{2}T_i'(0)\jump(|\dot\alpha|_g^2)\rvert_{t=T_i},
\end{align*}
since $\ol{Z_{\nrml}}(T_i) = -T_i'(0)\ol{\dot\alpha(T_i)}$ by equation \eqref{eq:adm-ref}. Hence equation \eqref{1var3-1} can be rewritten as
\begin{equation}
\label{1var3-2}
    0 = -\sum_{T_i\in\Rt} \left(\bigang{\jump \dot \alpha_{\tgt}(T_i), \overline{Z_{\tgt}}(T_i)} + \frac{1}{2}T_i'(0)\jump(|\dot\alpha|_g^2)\rvert_{t=T_i}\right).
\end{equation}
We now consider variations where $T_i'(0)=0$ for $T_i\in\Rt$, in which case by Corollary \ref{cor:suff} each $\ol{Z_{\tgt}}(T_i)$ can be chosen to be any arbitrary vector tangent to $Y$. Applying Equation \eqref{1var3-2} with $\ol{Z_{\tgt}}(T_i)$ attaining arbitrary tangent values, it follows that
$$\jump \dot \alpha_{\tgt}(T_i)=0$$
for all $i$, which is part of condition \eqref{reflectionitem} in Definition \ref{def:path.global}.
Finally, \eqref{1var3-1} now reduces to
\begin{equation}
\label{1var4}
0=\frac{d}{d\epsilon}\Big|_{\epsilon=0}(J[\alpha(\cdot,\epsilon)]) = -\sum_{T_i\in\Rt}\frac{1}{2}T_i'(0)\jump(|\dot\alpha|_g^2)|_{t=T_i} =-\frac{1}{2}\sum_{T_i\in\Rt}T_i'(0)(|\dot \alpha_{\perp}(T_i+)|_g^2-|\dot \alpha_{\perp}(T_i-)|_g^2)
\end{equation}
(the last equality following since $\jump(|\dot\alpha|_g^2) = \jump(|\dot\alpha_\tgt|_g^2+|\dot\alpha_\nrml|_g^2) = \jump(|\dot\alpha_\nrml|_g^2)$ since we now know that $\jump\dot\alpha_\tgt = 0$). By Corollary \ref{cor:sufft}, we can find variations with arbitrary values of $T_i'(0)$, from which we conclude that
\[|\dot \alpha_\perp(T_i+)|_g^2 - |\dot \alpha_\perp(T_i-)|_g^2 = 0\]
for each $i$. 
Finally, since $\alpha_1^{\pm}\ge 0$ with $\alpha_1^{\pm}(T_i)
= 0$, it follows that $\pm \dot \alpha_1^{\pm}(T_i)\ge 0$. For the
above condition to hold, it must be the case that
	$\dot
\alpha_\perp^-(T_i) + \dot \alpha_\perp^+(T_i) = 0$, which corresponds
to the remaining part of condition \eqref{reflectionitem} in Definition
\ref{def:path.global}. Therefore, we conclude that if
$J(\alpha(\cdot,\epsilon))$ is stationary for any variation
$\alpha(t,\epsilon)$ with $\alpha(t,0)=\alpha(t)$, then $\alpha(t)$
must be a \reflective \phys.

Conversely, suppose $\alpha(t)=\alpha(t,0)$ is a \reflective \phys. 
Then the integral term in \eqref{eq:var1_broken} vanishes by condition
\eqref{geodesicitem} in Definition \ref{def:path.global}, while the
boundary terms over $T_i\in\Kt$ vanish by condition
\eqref{kinkitem}. By condition \eqref{reflectionitem}, at $T_i\in\Rt$, 
$\jump\dot\alpha(T_i)$ is normal to the boundary, while by
\eqref{eq:adm-ref}, $\ol{Z_\perp}(T_i) =
-T_i'(0)\ol{\dot\alpha_\perp}(T_i) = 0$ by condition
\eqref{reflectionitem}; hence the pairing vanishes for each
$T_i$. This gives
$\frac{d}{d\epsilon}\Big|_{\epsilon=0}(J[\alpha(\cdot,\epsilon)]) =
0$, as desired. 
\end{proof}

\section{The Hessian of the action at a \reflective \phys}
\label{sec:2var}

\subsection{Second variations of \reflective \physs}
Now we take $\alpha(t,\epsilon, \delta)\in\Omega_0(M,p,p')$ to be a two-parameter
variation with fixed endpoints of a \reflective \phys
$\alpha(t)$.  Let
\begin{gather*}
    \pa_\ep \alpha(t,\ep,\delta) =Z(t, \ep,\delta),\quad \pa_\delta \alpha(t,\ep,\delta) =W(t, \ep,\delta)\\
    \pa_\ep \alpha(t,0,0) =Z,\quad \pa_\delta \alpha(t,0,0) =W
\end{gather*}
so that (using our jump and average notation from above), the first variation \eqref{eq:var1_broken} now reads
\begin{equation}\label{var1}
    \begin{aligned}
    \frac{d}{d\epsilon}(J[\alpha(\cdot,\epsilon,\delta)]) &= -\int_0^T\bigang{\cov\dot \alpha+\nabla V(\alpha), Z(t,\ep,\delta)}\,dt\\
		&+ \sum_{i=1}^n\bigang{-\jump\dot\alpha(T_i(\ep,\delta),\ep,\delta),\ol{Z}(T_i(\ep,\delta),\ep,\delta)}.
\end{aligned}
\end{equation}

Under the hypothesis that $\alpha(t)=\alpha(t,0,0)$ is a reflected \phys, we will now apply $\pa/\pa\delta$ to the various terms in \eqref{var1} and evaluate at $\delta=0$ to find the second variation.

We now examine the second variation $\frac{\partial^2}{\partial\epsilon \partial\delta}(J[\alpha(\cdot,\epsilon,\delta)])\rvert_{\epsilon=0,\delta=0}$. We split
$$
\frac{\partial^2}{\partial\epsilon \partial\delta}(J[\alpha(\cdot,\epsilon,\delta)])\rvert_{\epsilon=0,\delta=0} := J''_\circ + J''_\pa
$$
where $J''_\circ$, respectively $J''_\partial$, denote the $\delta$
derivative (evaluated at $0$) falling on the integral term in the
first line of \eqref{var1} (the ``interior'' term) and the derivative
falling on the second line (``boundary'' term). 

Differentiating the integral term and evaluating at
$\epsilon=0,\delta=0$ yields
\begin{equation}
    \label{eq:1var_1}
J''_\circ=    -\int_0^T\bigang{D_{\delta} D_t\dot \alpha + (\nabla^2 V)  W,Z}\,dt;
\end{equation}
note that there are no boundary terms arising from differentiating $T_i(\ep,\delta)$ since the integrand $\frac{D}{dt}\dot
\alpha+\nabla V(\alpha)$ equals zero by assumption of $\alpha$ being a \reflective \phys. (We recall that by assumption $\nabla^2
  V$ may have no worse than jump discontinuities across $Y$, so by our
  assumption that $\dot \alpha$ is transverse to $Y$, we may
  legitimately differentiate inside the integral by the Dominated
  Convergence Theorem.)
We now note that
$$
D_{\delta} D_t \dot \alpha =D_t D_{\delta} \dot \alpha+ R\big( \dot\alpha, \frac{\pa \alpha}{\pa \delta}\big) \dot \alpha =D_t^2 W+ R\big( \dot\alpha, W\big) \dot \alpha.
$$
Hence this integral term becomes the standard interior Jacobi equation term
\begin{equation}
J''_\circ=    -\int_0^T\bigang{D^2_t W + R(\dot\alpha, W) \dot \alpha+ (\nabla^2 V) W, Z} \,dt.
\end{equation}

Now we consider the boundary term $J''_\pa$.
Noting that $\pa_\delta\ang{\bullet, \bullet}=\nabla_W\ang{\bullet, \bullet}$, and that the covariant derivative may be brought inside the inner product by compatibility of the connection, we see that each term in the sum differentiates to 
\begin{equation}
  \begin{aligned}
    \label{eq:1var_2}
		& \frac{\partial}{\partial\delta}\left(\bigang{-\jump\dot\alpha(T_i(\ep,\delta),\ep,\delta),\ol{Z}(T_i(\ep,\delta),\ep,\delta)}\right)\\
		&=
    \underbrace{\bigang{-\pa_\delta T_i\jump D_t \dot\alpha, \overline{Z}}}_\text{I} 
    +\underbrace{\bigang{-\jump\nabla_W \dot\alpha, \overline{Z} }}_\text{II}
    +\underbrace{\bigang{-\jump \dot\alpha, \pa_\delta T_i\overline{ D_tZ }}}_\text{III}
    +\underbrace{\bigang{-\jump \dot\alpha,\overline{\nabla_W Z }}}_\text{IV};
\end{aligned}
\end{equation}
here, for brevity, we have omitted the evaluation of each term at $(T_i(\ep,\delta),\ep,\delta)$.
Since $\alpha$ satisfies $D_t \dot \alpha=-\nabla V$ both before and after $T_i$, with $\nabla V$ continuous, the term \RN{1} is zero, and we focus on \RN{2}, \RN{3}, \RN{4}. 
We split into the cases where $T_i\in\Kt$ and $T_i\in\Rt$.

If $T_i\in\Kt$, then $\jump\dot\alpha = 0$, i.e.\ the terms \RN{3}, \RN{4} both vanish. Hence, for $T_i\in\Kt$, we get
\[\frac{\partial}{\partial\delta}\left(\bigang{-\jump\dot\alpha(T_i(\ep,\delta),\ep,\delta),\ol{Z}(T_i(\ep,\delta),\ep,\delta)}\right)\Big\rvert_{\ep=0,\delta=0} = \bigang{-\jump\nabla_W\dot\alpha,\ol{Z}}|_{T_i} = \bigang{-\jump D_tW(T_i),\ol{Z}(T_i)},\]
where we rewrite $\nabla_W\dot\alpha = D_tW$ using the vanishing of the torsion.

We now focus on $T_i\in\Rt$.	Fixing this $T_i$ for the moment, we
  will employ the more concise notation
  \begin{equation}\begin{aligned}
\alpha^+ &= \alpha\rvert_{t \in [T_i(\ep,\delta),
  T_{i+1}(\ep,\delta)]},\\
\alpha^- &= \alpha\rvert_{t \in [T_{i-1}(\ep,\delta), T_{i}(\ep,\delta)]}.
\end{aligned}
\end{equation}
for the successive smooth segments of $\alpha$.  We will further
abbreviate by writing simply $\dot\alpha^\pm$ for the evaluation of this
time derivative at time $T_i\pm$.
	Recalling that by definition of reflected \physs, we have
          $\jump \dot \alpha = -2
        \dot\alpha_\nrml^-,$  we rewrite the remaining terms as 
\begin{equation}\label{234}
  \begin{aligned}
\underbrace{\bigang{-\jump\nabla_W \dot\alpha, \overline{Z} }}_\text{II}
+ \underbrace{\bigang{2\dot\alpha_\nrml^-,\pa_\delta T_i \overline{D_tZ }}}_\text{III}
+ \underbrace{\bigang{2\dot\alpha_\nrml^-,\overline{\nabla_W Z }}}_\text{IV}.
\end{aligned}
\end{equation}
Let 
\begin{equation}
\label{c} 
c(\delta,\ep) := \alpha^\pm(T_i(\ep,\delta),\ep,\delta) \in Y\end{equation} 
(with choice of $\pm$ irrelevant) be the point at which $\alpha$ reflects.  Note that as a consequence, $\pa_\delta c \in TY$.
Recall that the second fundamental form is defined by $$\second(X,Y)=-\ang{Y, \nabla_X N}N = \ang{\nabla_X Y, N}N$$ (with the second equality obtained using compatibility of the connection and vanishing of its torsion).
For later use we also introduce the shape operator $\shape$, given by \begin{equation}\label{def.shape}\second(V,W) =
    \ang{\shape(V), W} N.\end{equation}

\begin{lemma}\label{lemma:normalsum}
The averaged normal components satisfy the following relation:
  \begin{equation}\label{normalsum}
\pa_\delta T_i\ol{D_tZ}_\nrml+\ol{\nabla_Z W}_\nrml  = \pa_\delta T_i \pa_\ep T_i (\nabla V)_\nrml - \pa_\ep T_i \ol{D_t W}_\nrml +\second(\pa_\delta c, \pa_\ep T_i \dot \alpha_\tgt + \ol {Z}).
  \end{equation}
  \end{lemma}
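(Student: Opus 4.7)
The reflection locus $c(\ep,\delta) = \alpha^\pm(T_i(\ep,\delta),\ep,\delta) \in Y$ parameterizes a surface in $Y$. The plan is to compute the ambient covariant derivative $\nabla^M_{\pa_\delta c}\pa_\ep c$ in two ways: once via the Gauss formula (identifying its normal part with $\second(\pa_\delta c,\pa_\ep c)$), and once by Leibniz expansion using the decomposition $\pa_\ep c = \pa_\ep T_i\,\dot\alpha^\pm + Z^\pm$ — and equate them.

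\textbf{Tangent vectors along $c$.} Differentiating $c(\ep,\delta)$ in $\ep$ gives $\pa_\ep c = \pa_\ep T_i\,\dot\alpha^\pm + Z^\pm$; independence of the sign is precisely the jump condition \eqref{eq:adm-cont} of Lemma \ref{lemma:tangentspace}. Averaging the two expressions and using the reflective-physical-path identities $\ol{\dot\alpha_\nrml}=0$ and $\jump\dot\alpha_\tgt=0$ at the base point yields
\[
\pa_\ep c = \pa_\ep T_i\,\dot\alpha_\tgt + \ol Z, \qquad \pa_\delta c = \pa_\delta T_i\,\dot\alpha_\tgt + \ol W,
\]
both tangent to $Y$. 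The Gauss formula therefore gives $(\nabla^M_{\pa_\delta c}\pa_\ep c)_\nrml = \second(\pa_\delta c,\pa_\ep c)$.

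\textbf{Expansion.} Applying Leibniz to $\nabla^M_{\pa_\delta c}\pa_\ep c$ with the one-sided decompositions of $\pa_\ep c$ and $\pa_\delta c$ gives
\[
\nabla^M_{\pa_\delta c}\pa_\ep c = \pa_\delta\pa_\ep T_i\,\dot\alpha^\pm + \pa_\ep T_i\,\nabla^M_{\pa_\delta c}\dot\alpha^\pm + \nabla^M_{\pa_\delta c} Z^\pm.
\]
The remaining covariant derivatives simplify via (i) torsion-freeness of $\nabla$ combined with commutativity of the coordinate variations $\pa_t,\pa_\ep,\pa_\delta$, giving the identities $\nabla_W\dot\alpha = D_t W$ and $\nabla_W Z = \nabla_Z W$; and (ii) the physical-path equation $D_t\dot\alpha = -\nabla V$ on each side (with $\nabla V$ continuous across $Y$). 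These combine to yield
\[
\nabla^M_{\pa_\delta c}\pa_\ep c = \pa_\delta\pa_\ep T_i\,\dot\alpha^\pm - \pa_\ep T_i\,\pa_\delta T_i\,\nabla V + \pa_\delta T_i\,D_t Z^\pm + \pa_\ep T_i\,D_t W^\pm + \nabla_Z W^\pm.
\]
Since the left side is single-valued, averaging over $\pm$ replaces each $(\cdot)^\pm$ by $\ol{(\cdot)}$, and $\ol{\dot\alpha} = \dot\alpha_\tgt$ has vanishing normal component. Taking normal components, equating with $\second(\pa_\delta c,\pa_\ep c)$ from the Gauss step, and substituting $\pa_\ep c = \pa_\ep T_i\,\dot\alpha_\tgt + \ol Z$ yields \eqref{normalsum} after rearrangement.

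\textbf{Main obstacle.} The most delicate point is the one-sided Leibniz expansion, which requires careful bookkeeping of the symmetries $\nabla_X Y = \nabla_Y X$ for coordinate variations $X, Y$ along the two-parameter map $\alpha(t,\ep,\delta)$, and the correct use of the physical-path equation only within each smooth segment of $\alpha$. The observation that averaging the right side produces the single-valued left side then acts as an internal consistency check, guaranteeing that the jumps in $D_t Z^\pm$, $D_t W^\pm$, and $\nabla_Z W^\pm$ combine correctly.
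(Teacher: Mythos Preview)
Your proof is correct and follows essentially the same route as the paper. Both compute the normal component of the second covariant derivative of $c(\ep,\delta)$; you invoke the Gauss formula to identify this normal component with $\second(\pa_\delta c,\pa_\ep c)$ up front, while the paper differentiates the scalar constraint $\langle \pa_\delta c, N\rangle = 0$ in $\ep$ and only afterward recognizes the terms $\langle \pa_\delta c, \nabla_{\ol Z}N + \pa_\ep T_i D_t N\rangle$ as second fundamental form contributions. The Leibniz expansion, use of $D_t\dot\alpha = -\nabla V$, torsion-freeness, and the averaging step are identical in both arguments.
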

  \begin{proof}
    We recall that
    $$
\alpha^\pm(T_i(\ep,\delta),\ep, \delta) \in Y\quad \text{ for all } \ep, \delta.
    $$
    Differentiating in $\delta$ yields
\begin{equation}\label{normalcprime}
\ang{\dot \alpha^\pm \pa_\delta T_i + W^\pm, N}=0.
\end{equation}
Further differentiating in $\ep$ then gives (omitting a $\pm$ on all terms)
$$
\begin{aligned}
&\bigang{ D_t Z \pa_\delta T_i + D_t \dot\alpha \pa_\ep T_i \pa_\delta T_i +\dot \alpha \partial^2_{\ep\delta}T_i + \nabla_Z W + D_t W \pa_\ep T_i, N}\\
& + \ang{\pa_\delta c, \nabla_Z N+\pa_\ep T_i D_t N}=0.
\end{aligned}
$$
(Here we have used $\nabla_Z \dot \alpha= D_t Z$ by the vanishing of the torsion.)
We now take the average of this equation over $\pm$ and recall that $D_t\dot \alpha=-\nabla V$, while $\ang{\overline{\dot \alpha}, N}=0$ to obtain
$$
\bigang{ \ol{D_t Z} \pa_\delta T_i -\nabla V \pa_\ep T_i \pa_\delta T_i + \ol{\nabla_Z W} + \ol{D_t W} \pa_\ep T_i, N}
+ \ang{\pa_\delta c, \nabla_{\ol{Z}}
  N+\pa_\ep T_i D_t N}=0.
$$
Now recalling the definition of the second fundamental form we rewrite our identity as
$$
\bigang{ \ol{D_t Z} \pa_\delta T_i -\nabla V \pa_\ep T_i \pa_\delta T_i + \ol{\nabla_Z W} + \ol{D_t W} \pa_\ep T_i, N}
 - \ang{\second(\pa_\delta c, \ol{Z}+\pa_\ep T_i \dot \alpha_\tgt),N}=0,
$$
as desired.    
\end{proof}

Since $\nabla _Z W=\nabla_W Z$ we may now substitute \eqref{normalsum} into \eqref{234}, using it to replace the terms \RN{3} and \RN{4} with terms involving the LHS of \eqref{normalsum} and get
$$
  \bigang{-\jump D_t W, \overline{Z} }
 +\bigang{2\dot\alpha_\nrml^-,\pa_\delta T_i \pa_\ep T_i \nabla V -\pa_\ep T_i \ol{D_t W} + \second(\pa_\delta c, \ol{Z}+\pa_\ep T_i \dot \alpha_\tgt)}.
$$
Since $\alpha^\pm (T(\ep,\delta), \ep, \delta)=0$,
\begin{equation}\label{constraint}
\ang{\dot\alpha^\pm \pa_\delta T_i +W^\pm, N}=\ang{\dot\alpha^\pm \pa_\ep T_i +Z^\pm, N}=0.
\end{equation}
Substituting in the above yields for boundary terms in the second variation.
$$
  \bigang{-\jump D_t W, \overline{Z} }
 +\bigang{2W_\nrml^-, (Z_\nrml^-/\dot \alpha_\nrml^-) \nabla V} +2\ang{Z^-_\nrml ,\ol{D_t W} }+ \ang{2 \dot\alpha_\nrml^-,\second(\pa_\delta c, \ol{Z}+\pa_\ep T_i \dot \alpha_\tgt)}.
$$
Now use \eqref{constraint} to eliminate $\pa_\ep T_i$ in favor of the
variation vector field $Z$ (and recall the definition
\eqref{def.shape} of the shape operator), to find that this sum equals
$$
  \bigang{-\jump D_t W, \overline{Z} } +2\ang{\ol{D_t W}, Z^-_\nrml }
  +\bigang{2W_\nrml^-, (Z_\nrml^-/\dot \alpha_\nrml^-) \nabla V}
+ 2 \dot \alpha_1 \ang{\shape(\pa_\delta c), \ol{Z}}
- 2\second(\pa_\delta c, \dot \alpha_\tgt)Z_1^-.
$$

Summarizing the above discussion, we obtain:
\begin{theorem}\label{theorem:second}
Let $\alpha(t,\ep,\delta)\in\Omega_0(M;p,p')$ be a two parameter variation of a
\reflective \phys $\alpha(t)$ with variational vector field $\pa_\ep
\alpha(t,0,0)=Z, \pa_\delta \alpha(t,0,0)=W\in T_{\alpha}\Omega_0(M,p,p')$.  Then the second variation $\frac{\partial^2}{\partial\epsilon \partial\delta} 
     (J[\alpha(\cdot;\ep,\delta)])\big\rvert_{\substack{\epsilon=0, \delta=0}} $ is given by 
\begin{equation}
\label{eq:twovar-summary}
    \begin{aligned}
        &-\int_0^T\bigang{D^2_t W + R(\dot\alpha, W) \dot \alpha+
          (\nabla^2 V) W, Z} \,dt 
           + \sum_{T_i\in\Rt} \bigg( -  \bigang{\jump D_t W, \overline{Z} } +2\ang{\ol{D_t W}, Z^-_\nrml } \\
& +\bigang{2W_\nrml^-, (Z_\nrml^-/\dot \alpha_\nrml^-) \nabla V}
	+ 2 \dot \alpha_1 \ang{\shape(\pa_\delta c), \ol{Z}}
- 2\second(\pa_\delta c, \dot \alpha_\tgt)Z_1^-\bigg)_{T_i}
 - \sum_{T_i\in\Kt} \bigang{\jump D_tW (T_i),\ol{Z}(T_i) }
    \end{aligned}
\end{equation}
where
\begin{equation}
    \pa_\delta c = -\frac{W_\nrml}{\dot\alpha_\nrml}\dot \alpha_\tgt + W_\tgt.
\end{equation}
\end{theorem}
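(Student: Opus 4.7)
The plan is to synthesize the interior and boundary computations developed throughout the preceding discussion into the single statement of the theorem. The underlying strategy is to differentiate the first variation \eqref{var1} in $\delta$, evaluate at $\ep=\delta=0$, and exploit the conditions in Definition \ref{def:path.global} to simplify each piece.

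For the interior term $J''_\circ$, I would differentiate $-\int_0^T\bigang{D_t\dot\alpha + \nabla V(\alpha), Z}\,dt$ in $\delta$. Since $\alpha(t)=\alpha(t,0,0)$ is a reflective \phys, the integrand vanishes identically, so boundary contributions from differentiating the endpoints $T_i(\ep,\delta)$ are absent. Differentiation under the integral sign is justified by dominated convergence (since $\nabla^2 V$ has at worst a jump across $Y$ and $\dot\alpha$ is transverse to $Y$ at reflection times), and combining the torsion-free identity $D_\delta\dot\alpha = D_tW$ with the curvature identity
\[
D_\delta D_t\dot\alpha = D_t^2W + R(\dot\alpha,W)\dot\alpha
\]
produces the standard Jacobi integrand displayed in \eqref{eq:twovar-summary}.

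For the boundary term $J''_\pa$, I would differentiate each factor in $\bigang{-\jump\dot\alpha, \ol Z}$ evaluated at $T_i(\ep,\delta)$, producing the four contributions I--IV of \eqref{eq:1var_2}. Term I vanishes by continuity of $\nabla V$ across $Y$ combined with $D_t\dot\alpha = -\nabla V$. For kink times $T_i\in\Kt$, condition \eqref{kinkitem} gives $\jump\dot\alpha=0$, killing terms III and IV and leaving only $-\bigang{\jump D_tW,\ol Z}$. For reflection times $T_i\in\Rt$, the identity $\jump\dot\alpha = -2\dot\alpha_\nrml^-$ from condition \eqref{reflectionitem} rewrites III and IV as the quantities in \eqref{234}, at which point I would invoke Lemma \ref{lemma:normalsum} to replace the combination $\pa_\delta T_i\ol{D_tZ}_\nrml + \ol{\nabla_ZW}_\nrml$ by terms involving $\nabla V$, $\ol{D_tW}$, and the second fundamental form. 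The constraint \eqref{constraint}, obtained by differentiating $\alpha^\pm(T_i(\ep,\delta),\ep,\delta)\in Y$, then eliminates $\pa_\ep T_i$ in favor of $-Z_\nrml^-/\dot\alpha_\nrml^-$. Analogously, $\pa_\delta T_i = -W_\nrml/\dot\alpha_\nrml$, so that $\pa_\delta c = \dot\alpha^\pm \pa_\delta T_i + W^\pm$ reduces to the stated tangential expression. Rewriting $\second$ via the shape operator through \eqref{def.shape} produces the displayed form.

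The main obstacle, already resolved through Lemma \ref{lemma:normalsum}, is that the individual quantities $\ol{D_tZ}_\nrml$ and $\ol{\nabla_ZW}_\nrml$ are not separately controllable by $Z$ and $W$, but their particular combination arising in III and IV reorganizes via double differentiation of the constraint $\alpha^\pm(T_i(\ep,\delta),\ep,\delta)\in Y$ into a geometrically meaningful sum involving the second fundamental form. Once this algebraic identity is in hand, assembling the theorem becomes bookkeeping: collect the interior integral, the reflection-time terms, and the kink-time terms into the single expression \eqref{eq:twovar-summary}.
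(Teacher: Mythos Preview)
Your proposal is correct and follows essentially the same approach as the paper's own argument: differentiate the first variation \eqref{var1} in $\delta$, split into interior and boundary pieces, handle terms I--IV exactly as you describe, and invoke Lemma~\ref{lemma:normalsum} together with the constraint \eqref{constraint} to reorganize the reflection-time contributions. The sequence of reductions you outline matches the paper's derivation step for step.
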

As written here, the second variation is manifestly linear in $Z$.  It
is also symmetric in $Z$ and $W$ owing to its definition as a second
derivative. As every element in $T_{\alpha}\Omega_0(M,p,p')$ arises
from differentiating a variation (cf.~Lemma \ref{lemma:integration}),
we obtain a quadratic form on the tangent space:
\begin{definition}
  \label{def:form}
For $\alpha\in \Omega_0(M,p,p')$, we define a symmetric bilinear quadratic form
\begin{equation}
J''(\cdot, \cdot): T_{\alpha}\Omega_0(M,p,p')\times  T_{\alpha}\Omega_0(M,p,p') \longrightarrow \RR
\end{equation}
using equation \eqref{eq:twovar-summary}.
\end{definition}

\subsection{Reflected Jacobi fields for reflected physical trajectories}

The result of Theorem~\ref{theorem:second} now motivates the following
definition (the essential point being
Proposition~\ref{proposition:null} below). 
\begin{definition}
  \label{def:Jacobi}
    A vector field $W\in T_{\alpha}\Omega(M)$ along a \reflective
    \phys $\alpha(t)$ with reflection and kink times at $\Rt$ and $\Kt$, respectively, is called a \emph{\reflective Jacobi field} if it satisfies the Jacobi equation
\begin{equation}
\label{eq:Jacobi}
    {D_t^2 W} + R(\dot\alpha, W) \dot \alpha+ (\nabla^2 V) W = 0
\end{equation}
on $[0,T]\backslash(\Rt\cup\Kt)$, as well as the reflection conditions
\begin{gather}
    \label{eq:Jacobi_b1}
    \jump D_t W_\tgt=2 \dot \alpha_1^- \shape(\pa_\delta c),\\
    \label{eq:Jacobi_b2}
    \ol{D_t W}_\nrml=- (W_\nrml^-/\dot\alpha_\nrml^-)  (\nabla V)_\nrml +  \second(\pa_\delta c,\dot\alpha_\tgt)
\end{gather}
at $t=T_i\in\Rt$, where
\begin{equation}
    \pa_\delta c = -\frac{W_\nrml}{\dot\alpha_\nrml}\dot \alpha_\tgt + W_\tgt,
\end{equation}
and the kink conditions
\begin{equation}
\label{eq:Jacobi_kink}
\jump D_tW = 0
\end{equation}
at $t=T_i\in\Kt$.
\end{definition}
Note that a \reflective Jacobi field is determined completely by its
initial conditions $W(0)$, $\cov W (0)$ (and depends smoothly on
  them). Indeed, for $T_i\in\Rt\cup\Kt$, the values of $W(T_i-)$ and $\cov W(T_i-)$ uniquely determine the values of $W(T_i+)$ (via the requirement $W\in T_\alpha\Omega(M)$) and $\cov W(T_i+)$ (via the reflection/kink conditions above).	
	As with \reflective \physs,
  passing over $Y$ at points where $\dot \alpha$ is transverse to $Y$
  creates no difficulties with solvability or with smooth dependence
  on initial data.  At internal kinks, note that  Jacobi fields must be smooth.
\begin{remark}
Note that, in the definition, we do not require that a reflected Jacobi field vanish at endpoints. We allow the case in which no reflections occur, in which case the definition coincides with the usual definition of Jacobi fields.
\end{remark}
Let $p=\alpha(a)$ and $q=\alpha(b)$ ($a\neq b$) be two points on a \reflective \phys $\alpha(t)$. In particular, we are allowing $p,q\in Y$ or $p=q$.
\begin{definition}
\label{def:conjugate}
    The points $p$ and $q$ are \emph{conjugate} along $\alpha(t)$ if there exists a non-vanishing \reflective Jacobi field $W$ along $\alpha(t)$ such that $W(a)=W(b)=0$. The \emph{multiplicity} of $p$ and $q$ as conjugate points is equal to the dimension of the vector space consisting of all such \reflective Jacobi fields.
\end{definition}
Recall that the \emph{null space} of the second variation
$J'': T_{\alpha}\Omega_0\times T_{\alpha}\Omega_0 \to \RR$
is the vector space consisting of those $W\in T_{\alpha}\Omega_0$ such that $J''(W,Z)=0$ for
all \reflective variation vector field $Z\in T_{\alpha}\Omega_0$. The \emph{nullity} $\nu$ of $J''$ is equal to the dimension of this null space. We say $J''$ is \emph{degenerate} if $\nu>0$. We have the following Proposition
\begin{proposition}\label{proposition:null}
    A vector field $W\in T_{\alpha}\Omega_0$ belongs to the null space of $J''$ if and only if $W$ is a \reflective Jacobi field. Therefore $J''$ is degenerate if and only if the end points $p$ and $q$ are conjugate along $\alpha(t)$. The nullity of $J''$ is equal to the multiplicity of $p$ and $q$ as conjugate points.
\end{proposition}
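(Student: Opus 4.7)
The plan is to exploit the explicit formula \eqref{eq:twovar-summary} for $J''(W, Z)$, observing that $\pa_\delta c = -\frac{W_\nrml}{\dot\alpha_\nrml}\dot\alpha_\tgt + W_\tgt$ depends only on $W$, so that the formula is manifestly linear in $Z$. I will show that $J''(W, \cdot)$ vanishes on $T_\alpha\Omega_0$ if and only if $W$ satisfies the Jacobi equation \eqref{eq:Jacobi}, the kink condition \eqref{eq:Jacobi_kink}, and the two reflection conditions \eqref{eq:Jacobi_b1}--\eqref{eq:Jacobi_b2} of Definition \ref{def:Jacobi}; once this equivalence is in hand, the statements about degeneracy and nullity are immediate from Definition \ref{def:conjugate}.

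For the ``if'' direction, assuming $W$ is a reflected Jacobi field, the integral in \eqref{eq:twovar-summary} vanishes by \eqref{eq:Jacobi} and the $\Kt$-sum vanishes by \eqref{eq:Jacobi_kink}. For each term in the $\Rt$-sum, I would use $\ol Z = Z_\tgt$ (from $\ol Z_\nrml = 0$ in Corollary \ref{cor:physz}) to regroup the five pieces into those paired with $Z_\tgt$ and those paired with $Z_\nrml^-$; the former collapse to zero by \eqref{eq:Jacobi_b1} and the latter by \eqref{eq:Jacobi_b2}, after observing that $\second(\pa_\delta c, \dot\alpha_\tgt)Z_1^-$ is exactly the pairing of the normal vector $\second(\pa_\delta c,\dot\alpha_\tgt)$ with $Z_\nrml^-$.

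For the converse, assuming $J''(W, Z) = 0$ for every $Z \in T_\alpha\Omega_0$, I will test against successively more general $Z$ in order to strip off the three conditions on $W$. First, taking $Z$ compactly supported in an open subinterval $(T_i, T_{i+1})$ makes all boundary sums vanish, and arbitrariness of smooth compactly supported $Z$ there forces \eqref{eq:Jacobi} on each smooth segment of $\alpha$. Second, using Corollary \ref{cor:suff} to choose $Z$ continuous across all reflections $T_i \in \Rt$ with $Z(T_i) = 0$ (taking $T_i'(0) = 0$, hence $Z_\nrml^\pm = 0$ at each reflection) and arbitrary at kinks forces \eqref{eq:Jacobi_kink}. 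Third, for each fixed $T_i \in \Rt$ I would exploit the two independent degrees of freedom available there: Corollary \ref{cor:suff} yields test fields with $Z_\tgt(T_i)$ an arbitrary tangent vector and $\mu_i = 0$, forcing \eqref{eq:Jacobi_b1}, while Corollary \ref{cor:sufft} yields test fields with $Z_\tgt(T_i) = 0$ and $\mu_i$ arbitrary (hence $Z_\nrml^-$ arbitrary), forcing \eqref{eq:Jacobi_b2}.

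Finally, since every $W \in T_\alpha\Omega_0$ satisfies $W(0) = W(T) = 0$, the null space of $J''$ equals the space of reflected Jacobi fields along $\alpha$ vanishing at both endpoints, whose dimension is precisely the multiplicity of $p, q$ as conjugate points by Definition \ref{def:conjugate}. The main delicate step is the separation of the two reflection conditions in the converse direction: one must recognize that the allowed variations at each reflection time carry two genuinely independent parameters---the tangential value $Z_\tgt(T_i)$ and the time shift $\mu_i$, which controls $Z_\nrml^-$ via \eqref{eq:adm-ref}---and not treat $Z$ at $T_i$ as a single free vector, which would conflate \eqref{eq:Jacobi_b1} with \eqref{eq:Jacobi_b2}.
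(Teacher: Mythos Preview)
Your proposal is correct and follows essentially the same strategy as the paper: test against successively larger classes of $Z$ to peel off first the Jacobi equation on each smooth segment, then the kink condition, then the reflection conditions.

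The one place your argument differs is in extracting the two reflection conditions \eqref{eq:Jacobi_b1}--\eqref{eq:Jacobi_b2}. You separate them by first testing with $Z_\tgt(T_i)$ arbitrary and $\mu_i=0$ (so $Z_\nrml^\pm=0$) to force \eqref{eq:Jacobi_b1}, and then with $Z_\tgt(T_i)=0$ and $\mu_i$ arbitrary (so $Z_\nrml^-$ arbitrary and $\ol Z=0$) to force \eqref{eq:Jacobi_b2}. The paper instead chooses a single explicit $Z_3$ whose tangential and normal values at each $T_i$ are precisely the left-hand sides of \eqref{eq:Jacobi_b1} and \eqref{eq:Jacobi_b2} (up to sign), so that $J''(W,Z_3)$ becomes a sum of squared norms and both conditions drop out at once. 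Your approach is slightly more pedestrian but arguably more transparent, since it makes explicit the point you flag at the end---that the tangential value and the time-shift $\mu_i$ are independent degrees of freedom in the test field at each reflection---whereas the paper's choice of $Z_3$ packages this observation into a single clever construction. Either way works.
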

\begin{proof}
  If $W$ is a \reflective Jacobi field and vanishes at $p$ and $q$, comparing equation \eqref{eq:twovar-summary} with Definition \ref{def:path.global}, it is easy to see 
that $J''(W,Z)$ vanishes for all $Z\in T_{\alpha}\Omega_0$.

  Assume now $W\in T_{\alpha}\Omega_0$ belongs to the null space of $J''$. 
	Let $Z_1$ be any smooth vector field vanishing at all $T_i\in\Rt\cup\Kt$; note that $Z_1\in T_\alpha\Omega_0$ by Corollary \ref{cor:physz}. Then in computing $J''(W,Z_1)$ we see that all boundary terms vanish, leaving
	\[0 = J''(W,Z_1) = -\int_0^T\bigang{D^2_t W + R(\dot\alpha, W) \dot \alpha+
          (\nabla^2 V) W, Z_1} \,dt.\]
  Applying this to arbitrary $Z_1$ vanishing on $\Rt\cup\Kt$, it follows that $W$ satisfies $D^2_tW + R(\dot\alpha,W)\dot\alpha + (\nabla^2V)W = 0$ on $[0,T]\backslash(\Rt\cup\Kt)$, i.e.\ $W$ satisfies condition \eqref{eq:Jacobi}.
  
  Next, we take a smooth $Z_2$ which vanishes on $\Rt$, but not necessarily on $\Kt$. For such $Z_2$, we have
	\[0 = J''(W,Z_2) = - \sum_{T_i\in\Kt} \bigang{\jump D_tW(T_i),\ol{Z_2}(T_i)}\]
	since all boundary terms at $T_i\in\Rt$ vanish, and the integral term vanishes since we already have \eqref{eq:Jacobi}. By Corollary \ref{cor:physz}, we can take $Z_2$ to take on arbitrary values at $T_i\in\Kt$. Thus we obtain $\Delta D_tW(T_i) = 0$ for all $T_i\in\Kt$.
	Finally, we consider $Z_3$ which do not vanish at $T_i\in\Rt$.
        Specifically we take $Z_3$ to vanish at $t=0$ and $t=T$ and satisfy
	\[Z_{3,\tgt}(T_i\pm) = \jump D_t W_\tgt(T_i) - 2 \dot \alpha_1^-(T_i) \shape(\pa_\delta c)(T_i)\]
	and 
	\[Z_{3,\nrml}(T_i\pm) = \pm\left(\ol{D_t W}_\nrml(T_i) + (W_\nrml^-(T_i)/\dot\alpha_\nrml^-(T_i))  (\nabla V)_\nrml(\alpha(T_i)) +  \second(\pa_\delta c,\dot\alpha_\tgt)(T_i)\right)\]
	when $T_i\in\Rt$.	Note that $Z_3\in T_\alpha\Omega_0(M)$ by Corollary \ref{cor:physz}, since $\jump Z_3(T_i)$, resp. $\ol{Z_3}(T_i)$, is normal, resp. tangent, to $Y$ at $\alpha(T_i)$. Then
  \begin{equation*}
    \begin{split}
      0 = J''(W,Z_3) &= - \sum_{i=1}^m \left(\|\jump D_t W_\tgt - 2 \dot \alpha_1^- \shape(\pa_\delta c)\rvert_{T_i}\|^2_g\right. \\
       & \left.+\|\ol{D_t W}_\nrml + (W_\nrml^-/\dot\alpha_\nrml^-)  (\nabla V)_\nrml +  \second(\pa_\delta c,\dot\alpha_\tgt)\rvert_{T_i}\|^2_g\right),
    \end{split}
  \end{equation*}
  which yields condition \eqref{eq:Jacobi_b1} and \eqref{eq:Jacobi_b2}. Therefore, $W$ must be \reflective Jacobi field by Definition \ref{def:Jacobi}.
\end{proof}

\subsection{Reflected Jacobi fields as variation of \physs}
Let $\alpha(t,\ep)\in \Omega(M)$ be a 1-parameter variation of $\alpha(t)$ in the sense that $\alpha(t,0) = \alpha(t)$, not necessarily keeping the endpoints fixed, and such that each $\alpha(\cdot, \ep)$ is a \reflective \phys for any fixed $\ep\in(-\ep_0,\ep_0)$. In fact, such variation is given by a family of \reflective \phys.
\begin{proposition}\label{prop:physicalvar}
    If $\alpha(t,\ep)\in\Omega(M)$ is variation of a \reflective \phys
    as above, then the corresponding variation vector field $W(t) = \frac{\pa \alpha}{\pa \ep}(t,0)\in T_{\alpha}\Omega(M)$ is a \reflective Jacobi field along $\alpha(t)$.
  \end{proposition}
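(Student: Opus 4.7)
I would verify each defining condition of a \reflective Jacobi field (Definition~\ref{def:Jacobi}) in turn: membership in $T_\alpha\Omega(M)$, the Jacobi equation \eqref{eq:Jacobi} on smooth segments, the kink condition \eqref{eq:Jacobi_kink} at times in $\Kt$, and the reflection conditions \eqref{eq:Jacobi_b1}--\eqref{eq:Jacobi_b2} at times in $\Rt$. In every case the strategy is simply to differentiate the corresponding defining property of a \reflective \phys (Definition~\ref{def:path.global}) in $\ep$ and evaluate at $\ep=0$. Membership in $T_\alpha\Omega(M)$ is immediate from Lemma~\ref{lemma:tangentspace}, since each $\alpha(\cdot,\ep)$ is a \reflective path.

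For the interior Jacobi equation, on each smooth segment I would differentiate the ODE $D_t\dot\alpha+\nabla V(\alpha)=0$ in $\ep$. Using torsion-freeness ($D_\ep\dot\alpha=D_tW$) and the definition of the curvature tensor ($D_\ep D_t\dot\alpha=D_t^2W+R(\dot\alpha,W)\dot\alpha$), together with $D_\ep\nabla V(\alpha)=(\nabla^2V)W$ (legitimate since on each smooth segment we stay on a single side of $Y$ where $\nabla^2 V$ is well-defined), one recovers \eqref{eq:Jacobi} exactly. For the kink conditions, the Remark following Definition~\ref{def:path.global} tells us that $\alpha(\cdot,\ep)$ is $C^2$ at times in $\tilde\Kt$, so $\jump D_t\dot\alpha(T_i(\ep),\ep)=0$. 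Differentiating $\jump\dot\alpha(T_i(\ep),\ep)=0$ in $\ep$ produces both a $T_i'(0)\jump D_t\dot\alpha$ term (which vanishes by this $C^2$ property) and the jump $\jump D_tW(T_i)$, yielding \eqref{eq:Jacobi_kink}.

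The main substance of the argument, and the expected obstacle, is the reflection conditions at $T_i\in\Rt$. Here I would differentiate both identities
\[\ol{\dot\alpha_\nrml(T_i(\ep),\ep)}=0\quad\text{and}\quad \jump\dot\alpha_\tgt(T_i(\ep),\ep)=0\]
in $\ep$ at $\ep=0$. Three kinds of contributions appear: the time-shift term $T_i'(0)D_t\dot\alpha^\pm=-T_i'(0)\nabla V$; the variation itself $D_\ep\dot\alpha=D_tW$; and derivatives of the unit normal $N$ and the tangential projection, which depend on the basepoint $c(\ep)=\alpha(T_i(\ep),\ep)\in Y$ whose $Y$-tangential velocity is $\pa_\delta c=W_\tgt-(W_\nrml/\dot\alpha_\nrml)\dot\alpha_\tgt$ (the constraint $c(\ep)\in Y$ forcing $T_i'(0)=-W_\nrml/\dot\alpha_\nrml$). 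Differentiating $N$ along $c$ introduces the shape operator $\shape$ and second fundamental form $\second$. The main obstacle is careful bookkeeping of this third contribution; the computation is essentially a single-parameter specialization of Lemma~\ref{lemma:normalsum} together with its use in the boundary-term analysis of Theorem~\ref{theorem:second}. Separating the resulting equation into $Y$-tangential and normal components then yields \eqref{eq:Jacobi_b1} and \eqref{eq:Jacobi_b2}, respectively.
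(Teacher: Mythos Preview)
Your proposal is correct and follows essentially the same approach as the paper: differentiate each defining condition of a \reflective \phys in the variation parameter, using torsion-freeness and the curvature identity for the interior equation, and carefully tracking the contributions from the moving reflection time, the variation $D_tW$, and the basepoint-dependent normal/tangential projections (which bring in $\shape$ and $\second$) for the reflection conditions. The paper carries out the reflection computation directly rather than by invoking Lemma~\ref{lemma:normalsum}, but the content is the same, and your identification of $\pa_\delta c$ and $T_i'(0)=-W_\nrml/\dot\alpha_\nrml$ matches the paper's bookkeeping.
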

	\begin{remark}
 Note that this result (and hence the reflection
  conditions for Jacobi fields) was previously known in the Riemannian geometry case
  $V=0$---see \cite[Theorem 3.13]{KaZh:03}, \cite[Equation (2)]{Wo:07}, \cite[Lemma 16]{ZhLi:07}, \cite[Lemma 12]{IlSa:16}, etc.
\end{remark}
\begin{proof}
The idea to derive the conditions for \reflective Jacobi fields is to take the conditions for a reflected \phys (cf.~Definition \ref{def:path.global}) and differentiate. 

For the condition \eqref{eq:Jacobi} on the interior points, note that as \reflective \physs in the interior point satisfying
$\cov \dot \alpha + \nabla V(\alpha(t,\delta)) = 0,$
we have
\begin{equation}
    \begin{split}
        0 = D_{\delta}(\cov \dot \alpha + \nabla V(\alpha(t,\delta))) = D_t D_{\delta} \dot \alpha+ R\big( \dot\alpha, \frac{\pa \alpha}{\pa \delta}\big) \dot \alpha + \nabla^2 V(\alpha(t,\delta)) \frac{\pa \alpha}{\pa \delta},
    \end{split}
\end{equation}
where the RHS of the above equation is indeed \eqref{eq:Jacobi} if we evaluate at $\delta=0$.

For the conditions \eqref{eq:Jacobi_b1}\eqref{eq:Jacobi_b2} at the reflection times $T_i\in\Rt$, write $\alpha^{\pm}(t,\delta)$ as the restriction of $\alpha(t,\delta)$ to $t>T_i(\delta)$ or $t<T_i(\delta)$, extended smoothly to a neighborhood of $T_i(\delta)$. Note that 
we have the reflection conditions
\[\dot\alpha_\nrml^-(T_i(\delta),\delta) + \dot\alpha_\nrml^+(T_i(\delta),\delta) = 0, \quad
\dot\alpha_\tgt^-(T_i(\delta),\delta) - \dot\alpha_\tgt^+(T_i(\delta),\delta) = 0,\]
since $\alpha(\cdot,\delta)$ is a reflected \phys for each $\delta$.
Differentiating in $\delta$ gives
\begin{gather*}
  \left(\pa_\delta T_i D^-_t +  D^-_{\delta}\right)\dot\alpha_\nrml^- + \left(\pa_\delta T_i D^+_t +  D^+_{\delta}\right)\dot\alpha_\nrml^+ = 0, \\
  \left(\pa_\delta T_i D^-_t +  D^-_{\delta}\right)\dot\alpha_\tgt^- - \left(\pa_\delta T_i D^+_t +  D^+_{\delta}\right)\dot\alpha_\tgt^+ = 0,
\end{gather*}
where $D^{\pm}$ denotes taking the covariant derivative along $\alpha^{\pm}$, and all terms are evaluated at $t=T_i(0)$, $\delta=0$. Recalling the notation $c(\delta) = \alpha^{\pm}(T_i(\delta),\delta)$ (in which case $c(\delta)\in Y$ and $\partial_\delta c\in TY$, with $\partial_\delta c = \pa_\delta T_i\dot\alpha^{\pm} + \frac{\partial\alpha^{\pm}}{\partial\delta}$), we have
\begin{equation}
\label{eq:covderivnrml}
\begin{aligned}
\left(\pa_\delta T_i D^{\pm}_t +  D^{\pm}_{\delta}\right)\dot\alpha_\nrml^{\pm} &= \left(\pa_\delta T_i D^{\pm}_t +  D^{\pm}_{\delta}\right)\left(\langle\dot\alpha^{\pm},N\rangle N\right) \\
&= \left(\left\langle\left(\pa_\delta T_i D^{\pm}_t +  D^{\pm}_{\delta}\right)\dot\alpha^{\pm},N\right\rangle + \left\langle\dot\alpha^{\pm},\left(\pa_\delta T_i D^{\pm}_t +  D^{\pm}_{\delta}\right)N\right\rangle\right) N \\
&+ \langle\dot\alpha^{\pm},N\rangle\left(\pa_\delta T_i D^{\pm}_t +  D^{\pm}_{\delta}\right)N \\
&= \left(\left\langle\left(\pa_\delta T_i D^{\pm}_t +  D^{\pm}_{\delta}\right)\dot\alpha^{\pm},N\right\rangle + \left\langle\dot\alpha^{\pm},\nabla_{\partial_\delta c}N\right\rangle\right)N + \dot\alpha_1^{\pm}\nabla_{\partial_\delta c}N \\
&= \left\langle\left(\pa_\delta T_i D^{\pm}_t +  D^{\pm}_{\delta}\right)\dot\alpha^{\pm},N\right\rangle N - \second(\dot\alpha_{\tgt}^{\pm},\partial_\delta c) - \dot\alpha_1^{\pm}\shape(\partial_\delta c).
\end{aligned}
\end{equation}
Since
\[ D^{\pm}_t\dot\alpha^{\pm} + \nabla V(\alpha^{\pm}) = 0,\]
it follows that
\begin{equation}
\label{eq:covderivalpha}
\left(\pa_\delta T_i D^{\pm}_t +  D^{\pm}_{\delta}\right)\dot\alpha^{\pm} =  D^{\pm}_{\delta}\dot\alpha^{\pm} - \pa_\delta T_i\nabla V = \dot W^{\pm}-\pa_\delta T_i\nabla V.
\end{equation}
Thus, combining \eqref{eq:covderivnrml} and \eqref{eq:covderivalpha} yields
\begin{align*}
\left(\pa_\delta T_i D^{\pm}_t +  D^{\pm}_{\delta}\right)\dot\alpha_{\nrml}^{\pm} &= \left\langle\dot W^{\pm}-\pa_\delta T_i\nabla V,N\right\rangle N - \second(\dot\alpha_{\tgt}^{\pm},\partial_\delta c) - \dot\alpha_1^{\pm}\shape(\partial_\delta c)\\
&=\dot W_{\nrml}^{\pm} - \pa_\delta T_i(\nabla V)_{\perp} - \second(\dot\alpha_{\tgt}^{\pm},\partial_\delta c) - \dot\alpha_1^{\pm}\shape(\partial_\delta c).
\end{align*}
Averaging the above equation over $\pm$, and using that $\sum_{\pm}\dot\alpha_{\nrml}^{\pm} = 0$, yields
\[0 =\sum_{\pm}\left(\pa_\delta T_i D^{\pm}_t +  D^{\pm}_{\delta}\right)\dot\alpha_{\nrml}^{\pm} \implies  \overline{\dot W_{\nrml}} = \pa_\delta T_i(\nabla V)_{\nrml} + \second(\dot\alpha_{\tgt},\partial_\delta c) = -\frac{W_{\perp}^-}{\dot\alpha_{\perp}^-}(\nabla V)_{\nrml} + \second(\dot\alpha_{\tgt},\partial_\delta c),\]
thus giving \eqref{eq:Jacobi_b1}. Moreover,
\begin{align*}
\left(\pa_\delta T_i D^{\pm}_t +  D^{\pm}_{\delta}\right)\dot\alpha_\tgt^{\pm} &= \left(\pa_\delta T_i D^{\pm}_t +  D^{\pm}_{\delta}\right)\dot\alpha^{\pm} - \left(\pa_\delta T_i D^{\pm}_t +  D^{\pm}_{\delta}\right)\dot\alpha_\nrml^{\pm} \\
&= \dot W^{\pm} - \pa_\delta T_i\nabla V - \left(\dot W_{\nrml}^{\pm} - \pa_\delta T_i(\nabla V)_{\perp} - \second(\dot\alpha_{\tgt}^{\pm},\partial_\delta c) - \dot\alpha_1^{\pm}\shape(\partial_\delta c)\right)
\end{align*}
so we have
\[0 = \Delta\left(\left(\pa_\delta T_i D^{\pm}_t +  D^{\pm}_{\delta}\right)\dot\alpha_\tgt^{\pm}\right) = \Delta \dot W - \Delta \dot W_{\nrml} + \Delta\dot\alpha_1^{\pm}\shape(\partial_\delta c)\]
since $\pa_\delta T_i\nabla V$ and $\dot\alpha_{\tgt}$ are the same across the jump. Note that $\Delta\dot W - \Delta\dot W_\nrml = \Delta\dot W_\tgt$. Hence we have
\[\Delta(\dot W_\tgt) = -\Delta\dot\alpha_1^{\pm}\shape(\partial_\delta c) = 2\dot\alpha_1^-\shape(\partial_\delta c),\]
thus giving \eqref{eq:Jacobi_b2}.

Finally, \eqref{eq:Jacobi_kink} follows similarly by differentiating the condition $\jump\dot\alpha(T_i(\delta),\delta) = 0$ in $\delta$ whenever $T_i\in\Kt$.
\end{proof}

\begin{proposition}
    Every \reflective Jacobi field along a \reflective \phys $\alpha: [0,T]\to M$ may be obtained by a variation through \reflective \physs.
\end{proposition}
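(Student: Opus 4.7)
The plan is to exploit Lemma \ref{lem:existuniq} to build a variation through \reflective \physs by perturbing the initial position and velocity, and then to use the uniqueness of \reflective Jacobi fields from their initial data to identify the resulting variation vector field with the given $W$.

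First I would choose a smooth curve $\sigma:(-\ep_0,\ep_0)\to TM$ of the form $\sigma(\ep)=(p(\ep),v(\ep))$ with $v(\ep)\in T_{p(\ep)}M$, satisfying
\[p(0)=\alpha(0),\quad v(0)=\dot\alpha(0),\quad p'(0)=W(0),\quad D_\ep v(0)=D_tW(0).\]
(For instance, take $p(\ep)=\exp_{\alpha(0)}(\ep W(0))$ and let $v(\ep)$ be $\dot\alpha(0)+\ep D_tW(0)$ parallel-transported along $p$, or any other such curve; only the $0$- and $1$-jets matter.) By Lemma \ref{lem:existuniq}, for $\ep$ sufficiently small there is a unique \reflective \phys $\alpha(\,\cdot\,,\ep)$ with initial conditions $(\alpha(0,\ep),\dot\alpha(0,\ep))=\sigma(\ep)$, having the same number of reflection times $\tilde T_i(\ep)$ as $\alpha$, with $\tilde T_i(\ep)\to T_i$ as $\ep\to 0$.

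Next I would verify that $\alpha(t,\ep)$ is a genuine variation in the sense of Definition \ref{def:variation}: continuity in $(t,\ep)$, smoothness up to boundary on each strip $\{\tilde T_i(\ep)\le t\le \tilde T_{i+1}(\ep)\}$, and smoothness of $\tilde T_i(\ep)$. These follow from standard smooth dependence on initial data for the underlying second-order ODE on each smooth segment, together with the transverse intersection of $\alpha$ with $Y$ at each reflection time (which by the implicit function theorem gives the smoothness of $\tilde T_i(\ep)$, as in the construction within the proof of Lemma \ref{lem:existuniq}). Kink times of $\alpha$, if any, are handled identically: since $\alpha$ is $C^2$ at interior kinks and only $\nabla^2V$ can jump across $Y$, the flow still depends smoothly on initial data piecewise, with matching of position and velocity across the kinks.

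Let $\tilde W(t)=\frac{\pa\alpha}{\pa\ep}(t,0)$ be the associated variation vector field. By Proposition \ref{prop:physicalvar}, $\tilde W$ is a \reflective Jacobi field along $\alpha$. By construction, $\tilde W(0)=p'(0)=W(0)$, and using the torsion-free property of the connection, $D_t\tilde W(0)=D_\ep\dot\alpha(0,0)=D_\ep v(0)=D_tW(0)$. By the remark following Definition \ref{def:Jacobi}, a \reflective Jacobi field is determined by its initial value and initial covariant derivative at $t=0$: the Jacobi equation uniquely propagates $(W,D_tW)$ on each smooth piece, and the reflection/kink conditions \eqref{eq:Jacobi_b1}--\eqref{eq:Jacobi_kink} uniquely propagate the data across each $T_i\in\Rt\cup\Kt$. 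Hence $\tilde W=W$ on $[0,T]$, completing the proof.

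The main obstacle is not the uniqueness step but rather the smoothness of the family $\alpha(t,\ep)$ up to each reflection time (including the smoothness of $\tilde T_i(\ep)$ in $\ep$); once that is in hand, Proposition \ref{prop:physicalvar} and the initial-value uniqueness of \reflective Jacobi fields close the argument immediately.
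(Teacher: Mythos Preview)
Your proposal is correct and follows essentially the same approach as the paper: construct a one-parameter family of \reflective \physs with prescribed initial position and velocity (so that the variation field has initial data $(W(0),D_tW(0))$), invoke Proposition~\ref{prop:physicalvar} to see that the variation field is a \reflective Jacobi field, and conclude by uniqueness from initial data. The paper's proof is terser---it simply asserts the existence of such a family without citing Lemma~\ref{lem:existuniq} or discussing the smoothness of the reflection times---while you have spelled out these details explicitly.
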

\begin{proof}
Let $W$ be a \reflective Jacobi field along $\alpha$.  Let
$\alpha(t,\ep)$ be a family of \reflective \physs with $\alpha(t,0)=\alpha(t)$,
$\pa_\ep \alpha(0, 0) = W(0)$, $D_t \pa_\ep \alpha(0,0)=\dot W(0)$.
Then $Z:= \pa \alpha/\pa \ep\rvert_{\ep=0}$ is
a \reflective Jacobi field satisfying the same initial conditions as
$W$, hence $W=Z$.
\end{proof}
We record the following proposition which will be useful later. 
\begin{proposition}\label{prop:twopoint}
  If $p,p'\in M$ are non-conjugate along a \reflective \phys
  $\alpha(t)$ with $\alpha(0)=p, \alpha(T)=p'$, then for any pair of
  vectors $(V_0, V_T)\in T_pM\times T_{p'}M$, there exists a unique
  \reflective Jacobi field $W(t)$ along $\alpha(t)$ such that $W(0) =
  V_0$ and $W(T) = V_T$. 
\end{proposition}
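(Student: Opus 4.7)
The plan is to reduce the statement to a dimension count plus the definition of non-conjugacy. By the remark following Definition~\ref{def:Jacobi}, a reflective Jacobi field $W$ along $\alpha$ is uniquely determined by the pair of initial data $(W(0), D_tW(0)) \in T_pM \oplus T_pM$, since the Jacobi ODE is a linear second-order equation with smooth coefficients on each smooth segment, and the reflection/kink conditions \eqref{eq:Jacobi_b1}, \eqref{eq:Jacobi_b2}, \eqref{eq:Jacobi_kink} (together with membership in $T_\alpha\Omega(M)$) linearly and uniquely prescribe the values $(W(T_i+), D_tW(T_i+))$ from $(W(T_i-), D_tW(T_i-))$. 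Thus the space $\JJ$ of reflective Jacobi fields along $\alpha$ is a real vector space of dimension $2n$, where $n = \dim M$.

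Next, I would consider the linear evaluation-at-endpoints map
\[
\Phi : \JJ \longrightarrow T_pM \oplus T_{p'}M, \qquad \Phi(W) = (W(0), W(T)).
\]
Both source and target have dimension $2n$, so $\Phi$ is an isomorphism as soon as it is injective. The kernel of $\Phi$ is exactly the space of reflective Jacobi fields with $W(0)=W(T)=0$, which by Definition~\ref{def:conjugate} has dimension equal to the multiplicity of $p$ and $p'$ as conjugate points along $\alpha$. The hypothesis that $p$ and $p'$ are non-conjugate thus forces $\ker\Phi = 0$, and hence $\Phi$ is a linear isomorphism. This immediately gives existence and uniqueness of a reflective Jacobi field $W$ with prescribed endpoint values $(V_0, V_T)$.

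The only nontrivial ingredient is the linearity and well-posedness of the propagation across reflection times, but this is already in hand: the conditions \eqref{eq:Jacobi_b1}--\eqref{eq:Jacobi_kink} together with the admissibility conditions of Lemma~\ref{lemma:tangentspace} (i.e.\ membership in $T_\alpha\Omega(M)$) are linear in $W$ and $D_tW$, and express the right-limits at each $T_i$ linearly in terms of the left-limits, so the flow from $(W(0), D_tW(0))$ to $(W(T), D_tW(T))$ is a linear isomorphism of $T_pM\oplus T_pM$ with $T_{p'}M\oplus T_{p'}M$. The main (and essentially only) obstacle is to confirm that the reflection jump conditions really do define an invertible linear transformation at each $T_i\in\Rt$; but this follows from the fact that, reading \eqref{eq:Jacobi_b1}--\eqref{eq:Jacobi_b2} as expressions for $D_tW(T_i+)_\tgt$ and $D_tW(T_i+)_\nrml$ in terms of $D_tW(T_i-)$ and $W(T_i-)$ (using $\jump W$ prescribed by the admissibility condition of $T_\alpha\Omega(M)$), one obtains an affine invertible map, so $\dim\JJ = 2n$ as claimed. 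The rest of the proof is then the one-line dimension argument above.
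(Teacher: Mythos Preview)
Your proof is correct and follows essentially the same approach as the paper: both arguments parametrize reflective Jacobi fields by their initial data $(W(0),D_tW(0))$ (giving a $2n$-dimensional space), then use non-conjugacy to show the relevant endpoint-evaluation map is injective, hence an isomorphism by dimension count. The paper packages this slightly differently---it fixes $W(0)=V_0$ first and then inverts the $n$-dimensional map $Z\mapsto W_{0,Z}(T)$---but the content is the same. One small slip: the transition map at each $T_i\in\Rt$ is \emph{linear} in $(W(T_i-),D_tW(T_i-))$, not merely affine, since the zero Jacobi field is sent to zero; this is what you actually need for $\JJ$ to be a vector space.
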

\begin{proof}
Recall that a reflected Jacobi field along $\alpha$ exists and is
unique given its initial data.  Now given $V, Z \in T_{\alpha(0)} M$, let
$W_{V,Z}(t)$ denote the Jacobi field with $$W_{V,Z}(0)=V,\quad D_t
W_{V,Z}(0)=Z;$$ note that this depends bilinearly on $(V,Z)$.  Consider the map
$\Phi: T_{\alpha(0)}M\to T_{\alpha(T)}M$ defined by
$$
\Phi(Z):=W_{0, Z}(T).
$$
This linear map is injective since $\alpha(0)$ and $\alpha(T)$ are
non-conjugate, hence is an isomorphism.  Now given $V_0$ and $V_T$,
let $$Z_0=\Phi^{-1}(V_T-W_{V_0,0}(T)).$$
Then $W_{V_0, Z_0}(T)=V_T$ as desired.
\end{proof}
By Lemma \ref{lem:existuniq}, given a \reflective \phys $\alpha(t)$,
there is a neighborhood of $(\alpha(0),\dot\alpha(0))\in TM$
such that for $(x,v)$ in this neighborhood,
there exists a unique \reflective \phys, with initial location and velocity $(x,v)$,
and with reflection times close to that of $\alpha$.
Let $\alpha_{x,v}(t)$ denote this path.
\begin{proposition}\label{prop:locdiffeo}
  If $p,p'\in M$ are non-conjugate along a \reflective \phys
  $\alpha(t)$ with $\alpha(0)=p,\ \alpha(T)=p'$,  then the map
  $$
\Psi=(x,v) \mapsto (x, \alpha_{x,v}(T))
$$
is a local diffeomorphism from a neighborhood of $(p, \dot\alpha(0))$
in $TM$ to a neighborhood of $(p,p')$ in $M\times M$.
  \end{proposition}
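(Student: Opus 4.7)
The plan is to apply the inverse function theorem to $\Psi$ at $(p,\dot\alpha(0))$. Since both $TM$ and $M\times M$ have dimension $2n$, it suffices to compute the differential $d\Psi|_{(p,\dot\alpha(0))}$ and show it is injective.

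To compute the differential, I identify $T_{(p,\dot\alpha(0))}TM$ with $T_pM\oplus T_pM$ via the horizontal--vertical splitting induced by the Levi-Civita connection; a pair $(X,V)$ in this splitting is the initial velocity of a curve $s\mapsto(x(s),v(s))$ in $TM$ with $\dot x(0)=X$ and $D_sv(s)|_{s=0}=V$. By Lemma \ref{lem:existuniq}, for small $s$ the corresponding family $\alpha_{x(s),v(s)}$ exists and is a variation of $\alpha$ through \reflective \physs. By Proposition \ref{prop:physicalvar}, the variation vector field $W(t)=\partial_s\alpha_{x(s),v(s)}(t)|_{s=0}$ is a \reflective Jacobi field along $\alpha$, and a direct computation (using the torsion-free identity $\cov W(0)=D_s v(s)|_{s=0}$) yields $W(0)=X$ and $\cov W(0)=V$. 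Since the first component of $\Psi$ is just the projection $(x,v)\mapsto x$, we obtain
\[d\Psi|_{(p,\dot\alpha(0))}(X,V)=(X,W(T)).\]

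Injectivity is then immediate from the non-conjugacy hypothesis. If $d\Psi(X,V)=(0,0)$, then $X=0$, so that $W(0)=0$, and also $W(T)=0$. Thus $W$ is a \reflective Jacobi field vanishing at both $p$ and $p'$; by Definition \ref{def:conjugate} and the hypothesis that these points are non-conjugate along $\alpha$, we conclude $W\equiv 0$, hence $V=\cov W(0)=0$. So $d\Psi$ is injective, hence an isomorphism by dimension count, and the inverse function theorem delivers the claimed local diffeomorphism.

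The main point requiring care is the smoothness of $\Psi$ on a neighborhood of $(p,\dot\alpha(0))$. Away from reflection times this is standard ODE theory for the Hamiltonian flow associated with $L$; at reflection times, Lemma \ref{lem:existuniq} has already furnished smooth dependence of the reflection times on the initial data (implicitly via an implicit function theorem argument using transversality of $\alpha$ to $Y$), so patching together the smooth segments produces the desired smoothness and justifies differentiation of the endpoint map.
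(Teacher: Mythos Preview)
Your proof is correct and follows essentially the same route as the paper: both apply the inverse function theorem after identifying the differential of $\Psi$ with the map sending initial Jacobi data $(W(0),\cov W(0))$ to endpoint values $(W(0),W(T))$. The paper's version is terser, simply citing Proposition~\ref{prop:twopoint} for invertibility, whereas you unpack the differential explicitly and argue injectivity directly from Definition~\ref{def:conjugate}; the content is the same.
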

\begin{proof}
The derivative of $\Psi$ is invertible by
Proposition~\ref{prop:twopoint}, hence the result follows from the
Inverse Function Theorem.
  \end{proof}

Given $x$ and $y$ in the range of the local diffeomorphism defined by
Proposition~\ref{prop:locdiffeo}, we let $\alpha_{x,y}(t)$ denote the
resulting reflected physical path from $x$ to $y$.

\section{The index theorem}\label{sec:index}
We are ready to prove the Morse index theorems in this context. These are theorems which relate the \emph{index} of $J''$, on certain path spaces, with geometric quantities such as the number of conjugate points along the path.  We will prove such theorems in the context of fixed boundary conditions, concatenation of two paths, and periodic boundary condition; additional setup is needed in the last case.

We recall that given a quadratic form $Q$ on a (possibly infinite-dimensional) vector space $V$, its \emph{index} $\ind(Q)$ is the maximum dimension of a subspace on which $Q$ is negative definite. An important fact which we will frequently use in this section is the following lemma\footnote{If $V$ is finite
dimensional, this lemma follows by diagonalizing $Q$ on $V_1$ and $V_2$ with
appropriate choices of inner product on $V$ and bases on $V_1, V_2$;
this is is a special case of the \emph{Haynsworth inertia additivity formula}; see \cite{PuSt:05} for a reference. We were unable to find a proof in the literature in the case that $V$ is not finite-dimensional; however, the result follows from the finite-dimensional case, as follows: if $W$ is a finite-dimensional subspace of $V$ on which $Q$ is negative definite, and $W_1$, $W_2$ are subspaces of $V_1$, $V_2$ on which $Q$ is negative definite with maximal dimension, consider
\[\tilde{V} = W+W_1+W_2,\quad \tilde{V}_1 = \pi_1(W)+W_1,\quad \tilde{V}_2 = \pi_2(W)+W_2\]
where $\pi_1,\pi_2$ are the projections from $V$ onto $V_1$,
$V_2$. Then
$\ind{Q|_{\tilde{V}}} =
\ind{Q|_{\tilde{V}_1}}+\ind{Q|_{\tilde{V}_2}}$ since $\tilde V$
  is finite dimensional. Moreover
\[\dim(W)\le\ind{Q|_{\tilde{V}}} = \ind(Q|_{\tilde{V}_1})+\ind(Q|_{\tilde{V}_2}) = \ind(Q|_{V_1}) + \ind(Q|_{V_2}),\]
where the last equality follows since $\tilde{V}_1$, $\tilde{V}_2$ already contain a maximal-dimensional subspace of $V_1$, $V_2$ where $Q$ is negative definite. This shows $\ind(Q)$ is finite and is at most $\ind(Q|_{V_1})+\ind(Q|_{V_2})$; the other inequality $\ind(Q)\ge\ind(Q|_{V_1})+\ind(Q|_{V_2})$ is obvious.}:
\begin{lemma}
\label{lem:ind-add}
If $Q$ is a quadratic form on a vector space $V$, and $V = V_1\oplus V_2$, where the direct sum is orthogonal with respect to $Q$, then
\[\ind(Q) = \ind(Q|_{V_1}) + \ind(Q|_{V_2}),\]
assuming $\ind(Q|_{V_1}), \ind(Q|_{V_2})<\infty$.
\end{lemma}

\subsection{The Morse index theorem for fixed endpoints}
We follow Milnor's treatment \cite{Mi:63} with minor modifications. The main theorem is:
\begin{theorem}[Morse Index Theorem with fixed endpoints]
\label{thm:morse1}
\[\ind(J''|_{T_\alpha\Omega_0(M;p,p')}) = \text{number of conjugate
    points along }\alpha \text{ with respect to $p$}\]
\end{theorem}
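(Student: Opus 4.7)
Following Milnor \cite{Mi:63}, the plan is to reduce the computation of $\ind(J''|_{T_\alpha\Omega_0(M;p,p')})$ to a finite-dimensional index problem by splitting the path into short non-conjugate pieces, and then to show that the index grows as one extends the right endpoint by exactly the multiplicity of each conjugate point encountered. Choose a partition $0 = s_0 < s_1 < \cdots < s_N = T$ refining $\Rt \cup \Kt$ such that each subinterval $[s_{k-1}, s_k]$ is short enough for Lemma~\ref{lemma:locmin} to apply (with $\ep = \max_k(s_k - s_{k-1})$), and such that $\alpha(s_{k-1})$ is not conjugate to $\alpha(s_k)$ along the subpath; this is possible for sufficiently fine partitions by Proposition~\ref{prop:twopoint} combined with the smooth dependence of reflected Jacobi fields on initial data from Lemma~\ref{lem:existuniq}. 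Let $T'$ be the subspace of $T_\alpha\Omega_0(M;p,p')$ consisting of \emph{broken reflected Jacobi fields}: those $W$ which vanish at $t = 0, T$ and which, on each subinterval of the refined partition $\Rt \cup \Kt \cup \{s_k\}$, satisfy the Jacobi equation \eqref{eq:Jacobi}, together with the reflection/kink conditions of Definition~\ref{def:Jacobi} at interior times in $\Rt \cup \Kt$. We allow $D_t W$ to have arbitrary jumps at the auxiliary partition points $s_k \notin \Rt \cup \Kt$, treating these as fictitious kink times in the sense allowed by $T_\alpha\Omega_0(M;p,p')$. Let $T''$ consist of those $W \in T_\alpha\Omega_0(M;p,p')$ which vanish at every $s_k$.

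\textbf{Orthogonal splitting.} By the non-conjugacy of successive partition endpoints and Proposition~\ref{prop:twopoint}, the values $W(s_1), \ldots, W(s_{N-1})$ uniquely determine a broken reflected Jacobi field, so $\dim T' = (N-1)n < \infty$. For arbitrary $W \in T_\alpha\Omega_0(M;p,p')$, let $W' \in T'$ be defined by $W'(s_k) = W(s_k)$; then $W - W' \in T''$, yielding a direct sum $T_\alpha\Omega_0(M;p,p') = T' \oplus T''$. For $W \in T'$, $Z \in T''$, every term in \eqref{eq:twovar-summary} for $J''(W, Z)$ vanishes: the integral term vanishes because $W$ satisfies \eqref{eq:Jacobi} on each subinterval; the boundary terms at each $T_i \in \Rt$ vanish because $W$ satisfies \eqref{eq:Jacobi_b1}--\eqref{eq:Jacobi_b2} and $Z(T_i) = 0$; and the kink terms vanish (for both actual and fictitious kinks) since $Z$ vanishes there as well. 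Thus $T'$ and $T''$ are $J''$-orthogonal. Since every $Z \in T''$ vanishes at each $s_k$, Lemma~\ref{lemma:locmin} yields that $J''$ is positive definite on $T''$. We conclude
\[
\ind(J''|_{T_\alpha\Omega_0(M;p,p')}) = \ind(J''|_{T'}),
\]
a finite-dimensional computation.

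\textbf{Monotonicity and jumps along $\tau$.} For $\tau \in (0, T]$, let $\lambda(\tau) := \ind(J''|_{T_{\alpha|_{[0,\tau]}}\Omega_0(M; p, \alpha(\tau))})$. Extending any variation vector field on $[0, \tau]$ by zero on $[\tau, \tau']$ produces a valid element of the analogous space on $[0, \tau']$ preserving the value of $J''$, so $\lambda$ is non-decreasing. By Lemma~\ref{lemma:locmin}, $\lambda(\tau) = 0$ for $\tau$ sufficiently small. Applying the finite-dimensional reduction above to $\alpha|_{[0,\tau]}$ with a family of partitions that vary continuously with $\tau$, one obtains a continuous family of symmetric bilinear forms on a finite-dimensional space whose index is $\lambda(\tau)$; the number of negative eigenvalues can change only at values of $\tau$ where the form becomes degenerate, which by Proposition~\ref{proposition:null} are precisely the conjugate times, and the total jump at such a $\tau_0$ equals the multiplicity of $\alpha(\tau_0)$ as a conjugate point of $p$. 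Summing these jumps as $\tau$ ranges over $(0, T)$ and taking $\tau \to T^-$ yields the desired identity.

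\textbf{Main obstacle.} The principal technical subtlety beyond Milnor's original argument is ensuring the jump analysis is robust when a conjugate time $\tau_0$ coincides with a reflection or kink time of $\alpha$, or when the sliding right endpoint $\tau$ crosses such a time: one must verify that the matrix representing $J''$ on the broken-Jacobi-field space depends continuously on $\tau$ through these events (choosing the partition to slide with $\tau$, augmenting or dropping partition points appropriately, and using the smooth dependence on initial data noted after Definition~\ref{def:Jacobi}), and that the nullity-equals-multiplicity identification of Proposition~\ref{proposition:null} applies uniformly. Once this continuity is in place, the standard monotonicity-of-index argument carries over essentially verbatim from the classical case.
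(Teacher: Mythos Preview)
Your overall strategy matches the paper's: both follow Milnor's finite-dimensional reduction via a space of broken (reflected) Jacobi fields, use Lemma~\ref{lemma:locmin} to make $J''$ positive definite on the complement, and then track how $\lambda(\tau)$ grows as the right endpoint moves. (One cosmetic difference: the paper chooses its partition points $t_j$ with $\alpha(t_j)\notin Y$, which sidesteps the awkwardness of parametrizing a possibly discontinuous variation field by its ``value'' at a reflection time; your inclusion of $\Rt\cup\Kt$ in the partition makes the dimension count $\dim T'=(N-1)n$ need extra care, but this is fixable.)

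There is, however, a genuine gap in your jump analysis. You assert that ``the total jump at such a $\tau_0$ equals the multiplicity of $\alpha(\tau_0)$ as a conjugate point of $p$'' as though this follows from continuity of the finite-dimensional form together with monotonicity of $\lambda$. It does not. Continuity of $\tau\mapsto Q_\tau$ gives only the upper bound $\lambda(\tau_0+\ep)\le\lambda(\tau_0)+\nu$ (index plus nullity is upper semicontinuous), and your extension-by-zero argument gives only $\lambda(\tau_0+\ep)\ge\lambda(\tau_0)$. Nothing so far forces the $\nu$ null eigenvectors at $\tau_0$ to become negative rather than positive for $\tau>\tau_0$; a one-parameter family like $Q_\tau(x)=(\tau-\tau_0)^2x^2$ on $\RR$ is continuous, monotone-index, degenerate with nullity $1$ at $\tau_0$, and yet has no index jump at all.

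The missing ingredient is exactly Milnor's construction, which the paper carries out: given independent Jacobi fields $Q_1,\dots,Q_\nu$ vanishing at $0$ and $\tau$, extend them by zero on $[\tau,\tau+\ep]$, produce auxiliary variation fields $X_j$ with $\ang{D_tQ_i(\tau-),X_j(\tau-)}_g=\delta_{ij}$, and show directly that $J''$ is negative definite on the span of the $W_i$ together with the $c^{-1}Q_i-cX_i$ for small $c$. The paper's genuine contribution here is to verify that this still works when $\alpha(\tau)\in Y$: one must check that, for $Q_i$ a reflected Jacobi field vanishing at $\tau$ and extended by zero afterward, the boundary terms in \eqref{eq:twovar-summary} collapse to $J''(Q_i,X_j)=\ang{D_tQ_i(\tau-),X_j(\tau-)}_g$. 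Your ``Main obstacle'' paragraph identifies continuity of the form through reflection times as the crux, but the actual crux is this lower bound $\lambda(\tau+\ep)\ge\lambda(\tau)+\nu$ and its verification at reflection points; continuity alone is not enough, and ``the standard monotonicity-of-index argument'' you invoke \emph{is} this construction.
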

Before proving the theorem, we begin with a lemma insuring that sufficiently short (reflected)
paths are locally action-minimizing.

\begin{lemma}\label{lemma:locmin}
Fix a \reflective \phys $\alpha(t)$, $t \in [0,T]$. For $\ep>0$, let
\begin{align*}
V_\ep =& \left\{Z\in T_\alpha\Omega_0(M;p,p')\,:\,\text{for all }t\in[0,T]\text{, there exists }t'\in[0,T] \right.\\
&\left.\text{with }|t'-t|<\ep\text{ such that }Z(t')=0\right\}.
\end{align*}
For $\ep>0$ sufficiently small, $J''$ is positive definite on $V_\ep$.
\end{lemma}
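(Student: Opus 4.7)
The plan is to adapt the classical Milnor-style argument \cite[Section 15]{Mi:63} to the reflected setting. First, I would choose $\ep_0 > 0$ small enough that (a) $2\ep_0 < \min_i(T_{i+1}-T_i)$, so any interval of length $\le 2\ep_0$ meets $\Rt \cup \Kt$ in at most one point, and (b) for any $a < b$ in $[0,T]$ with $b-a \le 2\ep_0$ and $a,b \notin \Rt\cup\Kt$, the endpoints $\alpha(a)$ and $\alpha(b)$ are not conjugate along $\alpha|_{[a,b]}$. Condition (b) is ensured by the smooth dependence of \reflective Jacobi fields on initial data (cf.\ the remarks following Definition~\ref{def:Jacobi}): for $b-a$ small, the ``shooting map'' sending $\cov W(a)$ to $W(b)$ among Jacobi fields with $W(a)=0$ is close to the identity, hence injective.

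Given $\ep \le \ep_0$ and $Z \in V_\ep$ nonzero, I would inductively construct a partition $0 = s_0 < s_1 < \cdots < s_N = T$ with $Z(s_j) = 0$, $s_j - s_{j-1} \le 2\ep$, and $s_j \notin \Rt \cup \Kt$ for each $j$, using the defining property of $V_\ep$ to produce zeros of $Z$ at approximately any desired location, and (a) to slightly adjust these zeros to avoid $\Rt\cup\Kt$. Since $Z$ vanishes at every $s_j$ and no $s_j$ lies in $\Rt\cup\Kt$, all boundary contributions in \eqref{eq:twovar-summary} occur in the interior of the subintervals $(s_{j-1}, s_j)$, and the second variation splits as
\[
J''(Z,Z) \;=\; \sum_{j=1}^N I_j\bigl(Z|_{[s_{j-1},s_j]}\bigr),
\]
where $I_j$ collects the integral and any interior boundary contributions from \eqref{eq:twovar-summary} over $[s_{j-1},s_j]$.

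It thus suffices to show that each $I_j$ is positive definite on vector fields vanishing at $s_{j-1}$ and $s_j$. The classical integration-by-parts argument (cf.~\cite[pp.~84-86]{Mi:63}) gives this: using (b) and the appropriate analog of Proposition~\ref{prop:twopoint} on the subinterval, any vector field vanishing at the endpoints can be expanded against a basis of \reflective Jacobi fields vanishing at $s_{j-1}$, after which $I_j$ simplifies to a manifestly non-negative expression (a sum of squares of the derivatives of the expansion coefficients), vanishing only when the vector field is itself a \reflective Jacobi field vanishing at both endpoints. By (b) this forces the vector field to be zero, giving positive definiteness of $I_j$ and hence of $J''$ on $V_\ep$.

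The main obstacle is extending the integration-by-parts argument to subintervals that contain a reflection or kink point in their interior, where the vector field is only piecewise smooth and $I_j$ includes the interior boundary contributions from \eqref{eq:twovar-summary} involving $\jump D_t W$, $\ol{D_t W}$, the shape operator, and $\nabla V$. These boundary terms must conspire with the reflection/kink conditions \eqref{eq:Jacobi_b1}--\eqref{eq:Jacobi_kink} on the Jacobi basis to produce the required sum-of-squares form; verifying this cancellation is the most delicate calculation of the proof.
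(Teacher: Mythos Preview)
Your outline takes a genuinely different route from the paper. The paper does \emph{not} reduce to an Index-Lemma/sum-of-squares argument on subintervals; instead it proves a direct coercivity estimate. Writing $J''(Z,Z)$ as the integral term plus boundary terms and integrating by parts once in $t$, the paper observes that the $D_tZ$-dependent boundary contributions at each $T_i\in\Rt$ cancel exactly (using $\jump Z = -2Z_\nrml^-$ for variation fields along a reflected physical path), leaving boundary terms $\tilde B(Z,Z)$ that depend only on the \emph{values} of $Z$ at reflection points. Thus $|\tilde B(Z,Z)|\le C\|Z\|_{L^\infty}^2$, and together with the obvious bound on the curvature/Hessian term one gets
\[
J''(Z,Z)\ \ge\ \|D_tZ\|_{L^2}^2 - (C+C'T)\|Z\|_{L^\infty}^2.
\]
Membership in $V_\ep$ then gives a Poincar\'e-type inequality $\|Z\|_{L^\infty}^2\le 4\ep\|D_tZ\|_{L^2}^2$ (using that near every $t$ there is a $t'$ with $Z(t')=0$ on the same smooth piece), and positivity follows for $\ep$ small. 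No Jacobi-field basis, no conjugate-point hypothesis on subintervals, and no tracking of reflection conditions through an expansion are needed.

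Your approach is plausible in spirit, but as written it has a real gap: the ``most delicate calculation'' you flag is the entire content of the argument on any subinterval containing a reflection, and you do not carry it out. For the Milnor expansion $Z=\sum f_iW_i$ to yield a sum of squares across a reflection, you need the Lagrange bracket $\langle D_tW_i,W_j\rangle-\langle W_i,D_tW_j\rangle$ to be preserved by the reflection map on $(W,D_tW)$ \emph{and} you need the boundary terms in \eqref{eq:twovar-summary} to cancel against the jump terms produced by integrating $\sum f_i'f_j'\langle W_i,W_j\rangle$-type expressions by parts. Neither is verified. (The first is true---the reflection acts symplectically on Jacobi data---but it is not stated anywhere in the paper and would itself require a computation from \eqref{eq:Jacobi_b1}--\eqref{eq:Jacobi_b2}.) There is also a minor issue in your partition step: you cannot ``slightly adjust'' zeros of $Z$; the zeros are where they are, and nothing prevents the only zero of $Z$ near some $T_i\in\Rt$ from being $T_i$ itself. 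This is fixable, but not as you have written it. The paper's estimate sidesteps all of these issues at once.
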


\begin{proof}
By equation \eqref{eq:twovar-summary}, we have $J''(Z,Z) = I(Z,Z)+B(Z,Z)$ with
\[I(Z,Z) = -\int_0^T\bigang{{D_t^2 Z} + R(\dot\alpha, Z) \dot \alpha+ (\nabla^2 V) Z, Z} \,dt,\]
and $B(Z,Z)$ the ``boundary terms'' aside from the integral. Integrating by parts on each interval of the form $[T_i,T_{i+1}]$ yields
\[
I(Z,Z) = \int_0^T \bigang{\cov Z,\cov Z} - \bigang{R(\dot\alpha, Z) \dot \alpha+ (\nabla^2 V) Z, Z} \, dt  +\sum_{i=1}^n \jump\left(\bigang{\cov Z,Z}\right)(T_i);
\]
note that we can also write
\[\jump\left(\bigang{\cov Z,Z}\right)(T_i) = \bigang{\jump\cov Z(T_i),\overline{Z}(T_i)} + \bigang{\overline{\cov Z}(T_i),\jump Z(T_i)}.\]
For $T_i\in\Kt$, we have $\jump Z(T_i) = 0$ since $\alpha$ is a reflected \phys. Hence,
\[\sum_{i=1}^n\jump\left(\bigang{\cov Z,Z}\right)(T_i) = \sum_{T_i\in\Rt}\left(\bigang{\jump\cov Z,\overline{Z}} + \bigang{\overline{\cov Z},\jump Z}\right)\Big\rvert_{T_i} + \sum_{T_i\in\Kt}\bigang{\jump\cov Z,\overline{Z}}\Big\rvert_{T_i}.\]
It follows that
\[J''(Z,Z) = \int_0^T\bigang{\cov Z,\cov Z} - \bigang{R(\dot\alpha, Z) \dot \alpha+ (\nabla^2 V) Z, Z} \, dt  + \tilde{B}(Z,Z)\]
where
\begin{align*}
\tilde{B}(Z,Z) &= B(Z,Z) + \sum_{T_i\in\Rt}\left(\bigang{\jump\cov Z,\overline{Z}} + \bigang{\overline{\cov Z},\jump Z}\right)\Big\rvert_{T_i} + \sum_{T_i\in\Kt}\bigang{\jump\cov Z,\overline{Z}}\Big\rvert_{T_i} \\
&= \sum_{T_i\in\Rt}\left(      2\ang{Z^-_\nrml ,\ol{D_t Z} } + \bigang{\overline{D_tZ},\jump Z}
    +\bigang{2Z_\nrml^-, (Z_\nrml^-/\dot \alpha_\nrml^-) \nabla V}  + \ang{2 \dot\alpha_\nrml^-,\second(\pa_\ep c, \ol{Z}+T'_\ep \dot \alpha_\tgt)}\right)\Big\rvert_{T_i} \\
&=\sum_{T_i\in\Rt}\left(\bigang{2Z_\nrml^-, (Z_\nrml^-/\dot \alpha_\nrml^-) \nabla V}  + \ang{2 \dot\alpha_\nrml^-,\second(\pa_\ep c, \ol{Z}+T'_\ep \dot \alpha_\tgt)}\right)\Big\rvert_{T_i}
\end{align*}
since $\jump Z = -2Z^-_\nrml$.
Recalling that $\pa_\ep T_i$ and $\pa_\ep c$ are determined by the values of $Z$ at the reflection points via
\[\jump Z = -\pa_\ep T_i\jump\alpha,\quad \pa_\ep c = Z_\tgt + \pa_\ep T_i\alpha_\tgt,\]
it follow there exists a constant $C$ independent of $Z$ such that
\[|\tilde{B}(Z,Z)|\le C\max\limits_{[0,T]} \langle Z,Z\rangle.\]
Since there also exists $C'$ such that
\[|\bigang{R(\dot\alpha, Z) \dot \alpha+ (\nabla^2 V) Z, Z}|\le C'\langle Z,Z\rangle,\]
it follows that
\[J''(Z)\ge \|D_tZ\|_{L^2([0,T])}^2 - (C+C'T)\|Z\|_{L^\infty([0,T])}^2\]
where $\|W\|_{L^p([0,T])} := \||W(t)|_g\|_{L^p([0,T])}$. The claim is
that this quantity is non-negative (and strictly positive if
$Z\not\equiv 0$) if $\ep$ is sufficiently small. To verify this, we first take $\ep<\min\limits_{i=0,\dots,m}(T_{i+1}-T_i)$, i.e.\ $\ep$ to be smaller than the width of any subinterval on which $Z$ is smooth, in which case every $t\in[0,T]$ satisfies the property that there exists $t'\le t$, with $t-t'<\ep$, such that $Z(t')=0$ and $Z|_{(t',t)}$ is smooth. Then the Fundamental Theorem of Calculus holds on $[t',t]$, and we can write
\[\langle Z(t),Z(t)\rangle = \langle Z(t'),Z(t')\rangle + \int_{t'}^t 2\langle D_tZ(s),Z(s)\rangle\,ds = \int_{t'}^t 2\langle D_tZ(s),Z(s)\rangle\,ds.\]
It then follows that
\begin{align*}
\left|\langle Z(t),Z(t)\rangle\right| = \left|\int_{t'}^t 2\langle D_tZ(s),Z(s)\rangle\,ds\right| &\le 2\|D_tZ\|_{L^2([0,T])}\|Z\|_{L^2([t',t])} \\
&\le 2\ep^{1/2}\|D_tZ\|_{L^2([0,T])}\|Z\|_{L^\infty([0,T])}\\
&\le 2\ep\|D_tZ\|_{L^2([0,T])}^2 + \frac{1}{2}\|Z\|_{L^\infty([0,T])}^2,
\end{align*}
from which taking supremums and absorbing $\frac{1}{2}\|Z\|_{L^\infty([0,T])}^2$ into the LHS yields
\[\|Z\|_{L^\infty([0,T])}^2\le 4\ep\|D_tZ\|_{L^2([0,T])}^2.\]
Thus,
\[J''(Z)\ge (1-4(C+C'T)\ep)\|D_tZ\|_{L^2([0,T])}^2,\]
with $1-4(C+C'T)\ep>0$ for $\ep$ sufficiently small. Moreover, for such small $\ep$, we have $J''(Z)=0$ only if $D_tZ\equiv 0$, which when combined with $Z$ equaling $0$ at some times would happen only when $Z\equiv 0$.
\end{proof}
We are now ready to prove Theorem \ref{thm:morse1}:
\begin{proof}[Proof of Theorem \ref{thm:morse1}]
    We follow the classic treatment of \cite[\S 15]{Mi:63} in the
    standard setting, mainly noting where our setting of reflected
    trajectories (and mechanical rather than geometric Lagrangian
    function) requires changes.

    Fix times $0=t_0<t_1<\dots < t_k<T$ such that (for simplicity)
    $\alpha (t_j) \in M\backslash Y$ for all $j$ and sufficiently
    closely spaced that each
    pair $t_j, t_{j+1}$ can play the role of $t,t'$ in
      Lemma~\ref{lemma:locmin} above.

      Now let \begin{equation}\label{piecewiseJacobi} T_\alpha\Omega(t_0, \dots, t_k) \subset
      T_\alpha\Omega_0(M;p,p')\end{equation} denote the subspace of 
			``piecewise \reflective Jacobi fields,'' i.e.\ vector
      fields $W$ along $\alpha$ such that on each interval $[t_{j-1},
      t_j]$, $W$ is a \reflective Jacobi field and such that
      $W(0)=W(T)=0$.  Let $T'$ denote the space of $W \in
      T_\alpha\Omega_0(M;p,p')$ vanishing at $t_j$ for all $j=0,\dots,
      k$.
      
By the same reasoning as in \cite{Mi:63}, we now find
that $$T_\alpha\Omega_0(M;p,p')=T_\alpha\Omega(t_0, \dots, t_k) \oplus
T',$$ that the sum is orthogonal with respect to the quadratic form
$J''$, and that on $T'$ the form $J''$ is positive definite.  (The first
assertion follows from Theorem~\ref{theorem:second};  the latter assertion is where Lemma~\ref{lemma:locmin} is essential.)

Thus, the index of $J''$ is the same as the index of its restriction
to $T_\alpha\Omega(t_0, \dots, t_k)$.  As in \cite{Mi:63}, we now set
$\lambda(\tau)$ to be the value of the index $J''$ at $\alpha$
restricted to $t \in [0, \tau]$; this is nondecreasing and zero for sufficiently
small $\tau$ (using Lemma~\ref{lemma:locmin}) by the same reasoning on employed in \cite{Mi:63}.

Note that we may identify $T_\alpha\Omega(t_0, \dots,
t_k)$ with $T_{\alpha(t_1)}\oplus \dots \oplus T_{\alpha(t_k)}$.
The quadratic form $J''$ on $T_\alpha\Omega(t_0, \dots,
t_k)$ restricted to the time interval $[0, \tau]$ varies
continuously in $\tau$ even when $\tau$ equals $T_i$ for some $i$,
since the boundary terms in the second variation $J''(W,W)$ given by
\eqref{eq:twovar-summary} vanish when $W$ is a Jacobi field.  On any
subspace of $T_\alpha\Omega(t_0, \dots,
t_k)$ on which the index form is negative definite, it remains
negative definite on that space under small variations of $\tau$;
since $\lambda(\tau)$ is nondecreasing, then, we have
$\lambda(\tau-\ep) =\lambda(\tau)$ whenever $\ep>0$ is sufficiently
small.

We claim further that if $\alpha(\tau)$ is conjugate to $\alpha(0)$
with multiplicity $\nu$ then for $\ep>0$ sufficiently small,
$$
\lambda(\tau+\ep)=\lambda(\tau)+\nu;
$$
this will suffice to establish the theorem.
The fact that $\lambda(\tau+\ep)\leq \lambda(\tau)+\nu$ proceeds just
as in \cite{Mi:63}, as it depends just on the continuity of the index
form. It thus suffices to establish the reverse inequality.

If $\alpha(\tau)\notin Y$, then we also obtain
$\lambda(\tau+\ep)\geq \lambda(\tau)+\nu$ as in Milnor; we thus make a
few remarks on the case $\alpha(\tau) \in Y$.  In this case, let $W_1,\dots,
W_{\lambda(\tau)}$ denote the \reflective 
piecewise Jacobi fields vanishing at
the endpoints $t=0$ and $t=\tau$ (i.e., elements of $T_\alpha\Omega(t_0, \dots,
t_k)$) on which $J''$ is negative definite.  Let $Q_1,\dots, Q_\nu$
be independent reflected Jacobi fields vanishing at the endpoints $t=0$ and $t=\tau$.
Pick $X_k$ to be variation vector fields along $\alpha$ between times $0$ and
$\tau+\ep$ vanishing at the endpoints $t=0$ and $t=\tau+\ep$ so that
$$
\ang{D_t Q_i(\tau-), X_j(\tau-)}_g=\delta_{ij}.
$$
Extend the $Q_i$ and $W_j$ by zero on the interval $[\tau, \tau+\ep]$
(i.e.\ subsequent to reflection).  We note that just as in the
interior case, considering the second variation on $[0, \tau+\ep]$ we obtain
$$
J''(Q_i, W_j)=0
$$
and
$$
J''(Q_i, X_j)=2 \delta_{ij}.
$$
Here we crucially use the fact that the second variation form
\eqref{eq:twovar-summary} in the
case where 
$\alpha(\tau)\in Y$ with $Q_i(\alpha(\tau))=0$,
$Q_i$ a \reflective Jacobi field, and $Q_i=0$ for $t>\tau$ yields
$$
J''(Q_i, X_j) = \ang{D_t Q_i(\tau-), X_j(\tau-)}_g.
$$
The rest of the proof proceeds as usual: for $c$ sufficiently small,
the quadratic form $J''$ is negative definite on the span of
$$
W_1, \dots, W_{\lambda(\tau)}, c^{-1} Q_1-c X_1,\dots c^{-1} Q_\nu-c X_\nu.
$$

\end{proof}

\subsection{The Morse Index Theorem for composition of paths}

We also consider the Morse Index Theorem for the composition of paths: that is, given two \reflective \physs $\alpha_0:[0,T_0]\to M$ and $\alpha_1:[0,T_1]\to M$, with $\alpha_0(0)=p_0$, $\alpha_0(T_0) = p_1 = \alpha_1(0)$, and $\alpha_1(T_1) = p_2$, we consider the composite path $\alpha:[0,T]\to M$, where $T=T_0+T_1$, given by
\[\alpha(t) = \begin{cases} \alpha_0(t) & 0 \le t\le T_0 \\ \alpha_1(t-T_0) & T_0\le t\le T\end{cases}.\]
Note that if $\alpha_0$ and $\alpha_1$ are \reflective \physs, and
$p_1\not\in Y$, then $\alpha$ is a \reflective \phys iff
\begin{equation}\label{smooth}\dot\alpha_0(T_0) =
  \dot\alpha_1(0).\end{equation} \emph{The relation \eqref{smooth} will be
standing assumption for the remainder of this section.}
(Alternatively, we could have started with a
\reflective\phys $\alpha$, with $T_0\in(0,T)$ and $T_1 = T-T_0$, such
that $\alpha(T_0)\not\in Y$, and considered
$\alpha_0 = \alpha|_{[0,T_0]}$ and
$\alpha_1 = {\alpha(\cdot+T_0)|_{[0,T_1]}}$).

We can then consider the tangent space to $\alpha_i$ given by
\[T_{\alpha_i}\Omega_0(M;p_i,p_{i+1};T_i);\]
here, we explicitly incorporate the time $T_i$ into the notation. Note that $T_{\alpha_0}\Omega_0(M;p_0,p_1;T_0)$ can be identified with a subspace of $T_{\alpha}\Omega_0(M;p_0,p_2;T)$, the tangent space of the composite path, by identifying $Z\in T_{\alpha_0}\Omega_0(M;p_0,p_1;T_0)$ with the vector field on $\alpha(t)$ ($t\in[0,T]$) equaling $Z$ for $0\le t\le T_0$ and $0$ otherwise; such a vector field is indeed in the tangent space since $Z(T_0)=0$. We can similarly identify $T_{\alpha_1}\Omega_0(M;p_1,p_2;T_1)\subset T_\alpha\Omega_0(M;p_0,p_2;T)$, after shifting $t$ by $T_0$. Note that $J''$ is invariant under these identifications.

For $(x,y)$ sufficiently close to $(p_0,p_1)$, there exists a unique
\reflective\phys $\alpha_{0;x,y}$ (with time $T_0$) close to
$\alpha_0$ by Proposition \ref{prop:locdiffeo}. Define the action
$S_0(x,y)$ by  
\[S_0(x,y) = J[\alpha_{0;x,y}].\] 
Define $S_1(x,y)$ similarly, with respect to $\alpha_1$. 

Recalling the notation of \eqref{piecewiseJacobi},
let
  $T_\alpha\Omega(0,T_0,T)\subset T_\alpha\Omega_0(M;p_0,p_2;T)$
consist of continuous vector fields $W$ vanishing at the endpoints
$t=0,T$ such that $W|_{[0,T_0]}$ and $W(\cdot + T_0)|_{[0,T_1]}$ are
reflected Jacobi fields, allowing for a possible derivative
discontinuity at $t=T_0$. With this space of Jacobi fields defined, we have the following decomposition of $T_\alpha\Omega_0(M;p_0,p_2;T)$:

\begin{lemma}
Suppose $\alpha_i$ does not have a conjugate point at $t=T_i$ for $i=0,1$. Then
\[T_\alpha\Omega_0(M;p_0,p_2;T) = T_{\alpha_0}\Omega_0(M;p_0,p_1;T_0)
  \oplus T_{\alpha_1}\Omega_0(M;p_1,p_2;T_1) \oplus
  T_\alpha\Omega(0,T_0,T),\] with the direct sum orthogonal with
respect to $J''$.
\end{lemma}
\begin{proof}
Note for
$Z\in T_\alpha\Omega_0(M;p_0,p_2;T)$ that there is a unique reflected
Jacobi field $W_0$ on $[0,T_0]$ along $\alpha_0$ with $W_0(0)=0$ and $W_0(T_0) = Z(T_0)$ by Proposition \ref{prop:twopoint},
since we assume no conjugate points on $\alpha_0$ at $t=T_0$. Similarly,
there is a unique Jacobi field $W_1$ on $[0,T_1]$ along $\alpha_1$
with $W_1(0) = Z(T_0)$ and $W_1(T_1) = 0$. Now set
\[W(t) = \begin{cases} W_0(t) & 0\le t\le T_0 \\ W_1(t-T_0)& T_0 \le t\le T\end{cases},\]
$Z_0(t) = Z(t) - W_0(t)$ for $0\le t\le T_0$, and $Z_1(t) =
Z(t)-W_1(t-T_0)$ for $T_0\le t\le T$ (extending both $Z_0$ and $Z_1$
by zero to $[0,T]$). Then $Z_i\in
T_{\alpha_i}\Omega_0(M;p_i,p_{i+1};T_i)$ for $i=0,1$, $W\in
T_\alpha\Omega(0,T_0,T)$, and $Z = Z_0+Z_1+W$. Moreover, $Z_0$ and
$Z_1$ are orthogonal due to being zero on $[T_0,T]$ and $[0,T_0]$,
respectively, while $W$ is orthogonal to both $Z_0$ and $Z_1$ by 
Theorem~\ref{theorem:second}.
\end{proof}
Using Lemma \ref{lem:ind-add}, the following addition formula of indices is a direct corollary:
\begin{corollary}
\label{cor:ind-concat}
We have
\begin{align*}
\ind(J''|_{T_\alpha\Omega_0(M;p_0,p_2;T)}) &= \ind(J''|_{T_{\alpha_0}\Omega_0(M;p_0,p_1;T_0)}) + \ind(J''|_{T_{\alpha_1}\Omega_0(M;p_1,p_2;T_1)}) \\
&+\ind(J''|_{T_\alpha\Omega(0,T_0,T)}).
\end{align*}
\end{corollary}

We can in fact identify the last term in the above sum, using the action functions $S_0$ and $S_1$:

\begin{lemma}
\label{lem:hess-s0s1}
We have
\begin{equation}
\label{eq:hess-comp}
\ind(J''|_{T_\alpha\Omega(0,T_0,T)}) = \ind(\Hess|_{y=p_1}[S_0(p_0,y) + S_1(y,p_2)]).
\end{equation}
\end{lemma}
\begin{proof}
Note that
\[S_0(p_0,y)+S_1(y,p_2) = J[\alpha_{p_0,y,p_2}],\]
where
$\alpha_{p_0,y,p_2}:[0,T]\to M$ is a path such that
$\alpha_{p_0,y,p_2}|_{[0,T_0]}$ is a \reflective \phys from $p_0$ to
$y$ near $\alpha_0$, and $\alpha_{p_0,y,p_2}(\cdot+T_0)|_{[0,T_1]}$ is a \reflective
\phys from $y$ to $p_2$ near
 $\alpha_1$.  In particular,
for any $w\in T_{p_1}M$,
let 
\[y=y(\ep)=p_1+\ep w\]
in Riemannian normal coordinates at $p_1$.
Then
$\partial_{\ep}|_{\ep=0}[\alpha_{p_0,y,p_2}] = W$ where $W|_{[0,T_0]}$
and $W(\cdot + T_0)|_{[0,T_1]}$ are Jacobi fields, with $W(T_0) =
w$. Hence, 
\[\partial_{\ep\ep}^2|_{\ep=0}[S_0(p_0,y(\ep)) + S_1(y(\ep),p_2)] = \partial_{\ep\ep}^2|_{\ep=0}J[\alpha_{p_0,y,p_2}] = J''(W,W).\]
The left-hand side also equals
$\Hess|_{y=p_1}[S_0(p_0,y)+S_1(y,p_2)](w,w)$. Thus, identifying $T_\alpha\Omega(0,T_0,T)\cong T_{p_1}M$, via $W\mapsto W(T_0)$, with
\[J''(W,W) = \Hess|_{y=p_1}[S_0(p_0,y)+S_1(y,p_2)](W(T_0),W(T_0)),\]
thus giving \eqref{eq:hess-comp}.
\end{proof}

Putting together Corollary \ref{cor:ind-concat} and Lemma
\ref{lem:hess-s0s1} yields the following relation among the indices on $\alpha_0$, $\alpha_1$, and $\alpha$:
\begin{theorem}[Morse Index Theorem for composition of paths]\label{thm:addition}
Suppose $\alpha_i$ does not have a conjugate point at $t=T_i$ for $i=0,1$. Then
\begin{align*}
\ind(J''|_{T_\alpha\Omega_0(M;p_0,p_2;T)}) &= \ind(J''|_{T_{\alpha_0}\Omega_0(M;p_0,p_1;T_0)}) + \ind(J''|_{T_{\alpha_1}\Omega_0(M;p_1,p_2;T_1)}) \\
&+\ind(\Hess|_{y=p_1}[S_0(p_0,y) + S_1(y,p_2)]).
\end{align*}
\end{theorem}

\subsection{Periodic paths}
In order to discuss the Morse index theorem for periodic trajectories, we will need to discuss variations and Jacobi fields for periodic trajectories. We do so in this section before proceeding to the main theorem in the final section.

As we can freely choose a starting point for a periodic reflected physical path, we can without loss of generality let $\alpha(t)$ denote a periodic reflected physical path with
$\alpha(0)=\alpha(T) \notin Y$, and let
$\alpha(t,\ep,\delta)$ be a two-parameter family of periodic reflected
paths with $\alpha(t,0,0)=\alpha(t)$.  Denoting $\Rt$ and $\Kt$ the
reflection and kink times of $\alpha$, it will be convenient to
consider $\Kt_0:=\Kt\cup\{0\}$, i.e.\ to consider $t=0$ as an
additional kink time, owing to the possibility of $\alpha$ or its
variations being $\mathcal{C}^0$ but not $\Ct$ at $t=0$ (equivalently
$t=T$) when viewed as a periodic path. Correspondingly, for a vector
field $Z$ along $\alpha$, we write
\[\jump Z(0) := Z(0)-Z(T),\quad \ol{Z}(0) := \frac{1}{2}(Z(0)+Z(T)).\]
As we do not have vanishing of the endpoints at $t=0,T$ for
variational vector field $Z(t, \epsilon, \delta)\in T_\alpha\Omega_{\per}(M)$, in
contrast to equation \eqref{var1}, we obtain the first
variation formula
\begin{equation}
\label{eq:var1_closed}
    \begin{aligned}
    \frac{d}{d\epsilon}(J[\alpha(\cdot,\epsilon,\delta)]) &= -\int_0^T\bigang{\cov\dot \alpha+\nabla V(\alpha), Z(t,\ep,\delta)}\,dt\\
		&+ \sum_{i=0}^m\bigang{-\jump\dot\alpha(T_i(\ep,\delta),\ep,\delta),\ol{Z}(T_i(\ep,\delta),\ep,\delta)},
\end{aligned}
\end{equation}
where we use the convention that $T_0=0$ is independent of $\ep,\delta$.
The proof is similar to the proof for fixed endpoints, noting in this case that integration by parts \emph{does} produce boundary terms at $t=0$ and $t=T$; those boundary terms can be manipulated into the desired form in the same way as the terms at the other $T_i$. 

The second variation $J'':=\frac{\partial^2J}{\partial\epsilon \partial\delta}\big\rvert_{\substack{\epsilon=0\\ \delta=0}} $, in contrast to equation \eqref{eq:twovar-summary}, is given by
\begin{equation}
  \label{eq:var2_closed}
      \begin{aligned}
        J'' & (W,Z) =  -\int_0^T\bigang{D^2_t W + R(\dot\alpha, W) \dot \alpha+
            (\nabla^2 V) W, Z} \,dt 
             + \sum_{T_i\in\Rt} \bigg( -  \bigang{\jump D_t W, \overline{Z} } +2\ang{\ol{D_t W}, Z^-_\nrml } \\
  & +\bigang{2W_\nrml^-, (Z_\nrml^-/\dot \alpha_\nrml^-) \nabla V}
    + 2 \dot \alpha_1 \ang{\shape(\pa_\delta c), \ol{Z}}
  - 2\second(\pa_\delta c, \dot \alpha_\tgt)Z_1^-\bigg)_{T_i}
   - \sum_{T_i\in\Kt_0} \bigang{\jump D_tW (T_i),\ol{Z}(T_i) }.
      \end{aligned}
  \end{equation} 
\begin{remark}
    $J''(W,Z)$ is a quadratic form on the space $ T_\alpha\Omega_{\per}(M)$.
\end{remark}

Note that $\alpha(t)$ is a periodic reflected physical path (cf.~Definition \ref{def:path.global}). Let
\[\JJ(\alpha) = \{W\in T_\alpha\Omega(M)\,:\,W\text{ is a reflected Jacobi field along }\alpha|_{(0,T)}\},\]
i.e.\ the space of reflected Jacobi fields with no boundary conditions
at $0$, $T$. Let
\begin{align*}
\JJ_{C^0}(\alpha) &= \{W\in T_\alpha\Omega_{\per}(M)\,:\,W\text{ is a reflected Jacobi field along }\alpha|_{(0,T)}\}\\
&=\{W\in \JJ(\alpha)\,:\,W(0)=W(T)\}.
\end{align*}
This is a finite-dimensional vector space.
(In general the vector fields in $\JJ_{C^0}$ are continuous but not $C^1$ at the
endpoint $\alpha(0)=\alpha(T)$, hence the $C^0$ notation.) 
Note that if $p$ is not conjugate to itself along $\alpha$, i.e.\ there does not exist a nonzero $W\in \JJ(\alpha)$ with $W(0)=0=W(T)$, then
\begin{equation}
\label{eq:isom}
    \JJ(\alpha)\cong T_p(M)\times T_p(M): \  W\mapsto(W(0),W(T))
\end{equation}
is an isomorphism (since both spaces have dimension $2n$), and hence for any $w\in T_p(M)$, there exists a unique $W\in \JJ_{C^0}(\alpha)$ with $W(0)=w=W(T)$. This then gives the following:
\begin{lemma} 
Suppose $p$ is not conjugate to itself along $\alpha$. Then
\[T_\alpha\Omega_{\per}(M) = \JJ_{C^0}(\alpha)\oplus T_\alpha\Omega_0(M;p,p),\]
where the direct sum is orthogonal with respect to $J''$.
\end{lemma}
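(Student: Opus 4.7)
The plan is to separately establish (i) the vector-space direct-sum decomposition $T_\alpha\Omega_{\per}(M) = \JJ_{C^0}(\alpha) \oplus T_\alpha\Omega_0(M;p,p)$, and (ii) the $J''$-orthogonality of the two summands.

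For (i), given $W \in T_\alpha\Omega_{\per}(M)$, set $w := W(0) = W(T) \in T_pM$. Under the hypothesis that $p$ is not conjugate to itself along $\alpha$, the isomorphism \eqref{eq:isom} produces a unique $W_J \in \JJ(\alpha)$ with $W_J(0) = w = W_J(T)$; since $W_J$ is then continuous at the identified endpoint, $W_J \in \JJ_{C^0}(\alpha)$. Setting $W_0 := W - W_J$, one has $W_0(0) = W_0(T) = 0$, and linearity (together with $W, W_J \in T_\alpha\Omega(M)$) places $W_0$ in $T_\alpha\Omega_0(M;p,p)$. Uniqueness of the decomposition follows from triviality of the intersection: any element of $\JJ_{C^0}(\alpha) \cap T_\alpha\Omega_0(M;p,p)$ is a reflected Jacobi field vanishing at both $0$ and $T$, which must be the zero field by non-conjugacy.

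For (ii), I would compute $J''(W_J, W_0)$ directly from \eqref{eq:var2_closed} with $W := W_J$ and $Z := W_0$. The interior integral vanishes because $W_J$ satisfies the Jacobi equation \eqref{eq:Jacobi}. The sum over $\Kt_0$ vanishes as well: at proper kink times $T_i \in \Kt$, condition \eqref{eq:Jacobi_kink} forces $\jump D_t W_J(T_i) = 0$, while at $T_i = 0$ the factor $\ol{W_0}(0) = \tfrac{1}{2}(W_0(0) + W_0(T))$ is zero. It remains to show that at each reflection time $T_i \in \Rt$, the five boundary terms in \eqref{eq:var2_closed} sum to zero; I would use the matching conditions $\ol{W_{0,\nrml}} = 0$ and $\jump W_{0,\tgt} = 0$ (so that $\ol{W_0}$ is tangent to $Y$) to play each term against one of \eqref{eq:Jacobi_b1}--\eqref{eq:Jacobi_b2}. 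Explicitly, \eqref{eq:Jacobi_b1} converts $-\langle \jump D_t W_J, \ol{W_0}\rangle$ into $-2\dot\alpha_1^- \langle \shape(\pa_\delta c_J), \ol{W_0}\rangle$, cancelling the term $2\dot\alpha_1 \langle \shape(\pa_\delta c_J), \ol{W_0}\rangle$; and \eqref{eq:Jacobi_b2} expands $2\langle \ol{D_t W_J}, W_{0,\nrml}^-\rangle$ into two summands that cancel the remaining $\langle 2W_{J,\nrml}^-, (W_{0,\nrml}^-/\dot\alpha_\nrml^-)\nabla V\rangle$ and $-2\second(\pa_\delta c_J, \dot\alpha_\tgt) W_{0,1}^-$ contributions.

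The main obstacle is this term-by-term reflection-time cancellation; it is tedious but not surprising, since the reflection conditions \eqref{eq:Jacobi_b1}--\eqref{eq:Jacobi_b2} for a Jacobi field were engineered (cf.\ Proposition~\ref{proposition:null}) precisely so that a Jacobi field pairs trivially under the Hessian with any admissible variation vanishing at the endpoints, mirroring the classical integration-by-parts identity in the smooth setting.
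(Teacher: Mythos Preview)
Your proposal is correct and follows the same approach as the paper. The paper's proof is terser on part (ii): it simply says that the Jacobi-field conditions ``kill all terms in equation \eqref{eq:var2_closed} except the term $\langle \jump D_tW(0),\ol{Z}(0)\rangle$,'' which is then killed by $Z(0)=Z(T)=0$, whereas you unpack the reflection-time cancellations explicitly---but this is exactly the content of Proposition~\ref{proposition:null}, as you yourself note.
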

\begin{proof}
    Given any $V\in T_\alpha\Omega_{\per}(M)$, by the above isomorphism, its value $V_p$ uniquely determines a closed \reflective Jacobi field $W \in \JJ_{C^0}(\alpha)$. There exists a unique variational vector field $Z\in T_\alpha\Omega_0(M;p,p)$ such that $V=Z+W$. The orthogonality condition $J''(W,Z)=0$ is true as $W \in \JJ_{C^0}(\alpha)$ kills all terms in equation \eqref{eq:var2_closed} except the term $\bigang{\jump D_tW (0),\ol{Z}(0) }$, which is killed by $Z\in T_\alpha\Omega_0(M;p,p)$.
\end{proof}

Consequently we obtain:
\begin{corollary}
\label{cor:ind_idd}
The index of $J''$ on periodic paths is given by
\[\ind(J''|_{T_\alpha\Omega_{\per}(M)}) = \ind(J''|_{\JJ_{C^0}(\alpha)}) + \ind(J''|_{T_\alpha\Omega_0(M;p,p)}).\]
\end{corollary}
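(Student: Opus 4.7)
The plan is to derive this corollary as an immediate consequence of the preceding orthogonality lemma, combined with a general additivity principle for the index of a symmetric bilinear form over an orthogonal direct sum. Let $V := T_\alpha\Omega_{\per}(M)$, $V_1 := \JJ_{C^0}(\alpha)$, $V_2 := T_\alpha\Omega_0(M;p,p)$, and $Q := J''$. The preceding lemma identifies $V = V_1 \oplus V_2$ with $V_1 \perp V_2$ with respect to $Q$, so the entire statement reduces to showing
\[
\ind(Q|_V) \;=\; \ind(Q|_{V_1}) \;+\; \ind(Q|_{V_2}).
\]

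The inequality $\ind(Q|_V) \ge \ind(Q|_{V_1}) + \ind(Q|_{V_2})$ is immediate: given subspaces $W_i \subset V_i$ on which $Q$ is negative definite, $Q$-orthogonality of $V_1$ and $V_2$ forces $Q$ to be negative definite on $W_1 \oplus W_2 \subset V$, and maximizing the $\dim W_i$ yields the bound. Note that $\ind(Q|_{V_1})$ is automatically finite because $\JJ_{C^0}(\alpha)$ is finite-dimensional (its elements are determined by finitely many initial data via \eqref{eq:isom}), and $\ind(Q|_{V_2})$ will be finite by the Morse index theorem for fixed endpoints (Theorem \ref{thm:morse1}).

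For the reverse inequality, I would run the finite-dimensional reduction already sketched in the footnote preceding the corollary: given an arbitrary finite-dimensional $W \subset V$ on which $Q$ is negative definite, and fixing $W_1, W_2$ maximal negative-definite subspaces of $V_1, V_2$ respectively, form the finite-dimensional spaces
\[
\tilde V := W + W_1 + W_2, \qquad \tilde V_i := \pi_i(W) + W_i \subseteq V_i,
\]
where $\pi_i : V \to V_i$ are the projections associated to the direct-sum decomposition $V = V_1 \oplus V_2$. Because $V_1$ and $V_2$ are $Q$-orthogonal summands of $V$, one has $\tilde V = \tilde V_1 \oplus \tilde V_2$ as a $Q$-orthogonal direct sum, with both summands finite-dimensional. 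Applying the Haynsworth inertia additivity formula \cite{PuSt:05} to $Q|_{\tilde V}$ gives
\[
\dim W \;\le\; \ind(Q|_{\tilde V}) \;=\; \ind(Q|_{\tilde V_1}) + \ind(Q|_{\tilde V_2}) \;\le\; \ind(Q|_{V_1}) + \ind(Q|_{V_2}),
\]
the last step using that $W_i$ is already a maximal negative-definite subspace of $V_i \supseteq \tilde V_i$. Taking the supremum over all finite-dimensional negative-definite $W \subset V$ gives the desired bound.

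The only subtle point is that $V_2$ is infinite-dimensional, so one cannot simply diagonalize $Q$ on $V$; the footnote's construction circumvents this by projecting $W$ into each summand and doing the arithmetic inside the finite-dimensional space $\tilde V$. Beyond this, the proof is essentially formal once the preceding $J''$-orthogonal decomposition is in hand.
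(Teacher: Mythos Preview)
Your proposal is correct and is essentially identical to the paper's own argument: the corollary is deduced from the preceding $J''$-orthogonal decomposition lemma together with the general index-additivity fact whose infinite-dimensional version is established via the finite-dimensional reduction in the footnote, exactly as you describe.
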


\subsection{The Morse index Theorem for periodic paths}
Note that Corollary \ref{cor:ind_idd} express the index of $J''$ on
periodic paths as the sum of the index $J''$ on fixed paths
$T_\alpha\Omega_0(M;p,p)$, which by Theorem \ref{thm:morse1} equals
the number of conjugate points along the path, and the index of $J''$
on $\JJ_{C^0}(\alpha)$. It suffices to identify this latter quantity. 

Fix a periodic \reflective \phys $\alpha$ with period
  $T$, and fix $p =\alpha(0)$.
  Assume that $p$ is not conjugate to itself along $\alpha$.  Then
  given any $x,y$ in a small neighborhood of $p$, recall from
  Proposition~\ref{prop:locdiffeo} that there exists a
  unique \reflective \phys $\alpha_{x,y}(t)$ close to
  $\alpha$ with 
  $$
  \alpha_{x,y}(0)=x,\ \alpha_{x,y}(T)=y.
    $$
We then as usual define the \emph{action}
    $$
S(x,y) := J[\alpha_{x,y}].
$$

\begin{lemma}
\label{lem:morse2}
\[\ind(J''|_{\JJ_{C^0}(\alpha)}) = \ind(\Hess|_{x=p}[S(x,x)])\]
\end{lemma}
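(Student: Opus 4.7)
The plan is to realize both quadratic forms as the same object on $T_pM$, via the natural linear isomorphism $\Phi : T_pM \to \JJ_{C^0}(\alpha)$ sending $v$ to the unique reflected Jacobi field $W$ along $\alpha$ with $W(0)=W(T)=v$. The isomorphism \eqref{eq:isom}, combined with the non-conjugacy hypothesis on $p$, guarantees that $\Phi$ is indeed a linear isomorphism. Since the index of a symmetric bilinear form is invariant under linear isomorphism of the underlying space, it suffices to prove the pointwise identity $\Hess_p[S(x,x)](v,v) = J''(\Phi(v), \Phi(v))$ for all $v \in T_pM$; polarization then promotes this to equality of bilinear forms and hence of indices.

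To realize $\Phi(v)$ geometrically, given $v \in T_pM$ I would fix a smooth curve $x(\ep)$ in $M$ with $x(0)=p$, $x'(0)=v$, and set $\alpha(t,\ep) := \alpha_{x(\ep), x(\ep)}(t)$, which by Proposition \ref{prop:locdiffeo} is a smooth one-parameter family of reflected physical paths in $\Omega_{\per}(M)$ satisfying $\alpha(0,\ep) = \alpha(T,\ep) = x(\ep)$. By Proposition \ref{prop:physicalvar}, the variation vector field $W := \partial_\ep \alpha|_{\ep=0}$ is a reflected Jacobi field, and differentiating the endpoint identity gives $W(0) = W(T) = v$, so $W = \Phi(v)$. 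In particular, $\Hess_p[S(x,x)](v,v) = \tfrac{d^2}{d\ep^2}\big|_{\ep=0} J[\alpha(\cdot,\ep)]$.

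To compute this left hand side I would apply the first variation formula \eqref{eq:var1_closed} to the family $\alpha(\cdot,\ep)$: since each member is a reflected physical path, the integral term vanishes, and at each interior reflection or kink time $T_i(\ep)$ the corresponding boundary term vanishes by the algebraic simplifications already used in the proof of Lemma \ref{lemma:reflective_path} (namely $\jump\dot\alpha_\tgt = 0$ and $\jump(|\dot\alpha|_g^2) = 0$ at reflections, and $\jump\dot\alpha = 0$ at kinks). Only the $T_0 = 0$ term survives:
\[\frac{d}{d\ep} J[\alpha(\cdot,\ep)] = -\bigang{\jump\dot\alpha(0,\ep),\ol{Z}(0,\ep)},\qquad Z := \partial_\ep\alpha.\]
Differentiating once more at $\ep = 0$, the product rule yields two terms; the one containing $\jump\dot\alpha(0,0)$ vanishes because $\alpha$ is a truly periodic physical path at $\ep=0$. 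The surviving term, using $D_\ep\dot\alpha = D_t W$ (vanishing torsion, since $[\partial_\ep\alpha,\partial_t\alpha]=0$) and $\ol{Z}(0,0) = \ol{W}(0) = v$, gives
\[\Hess_p[S(x,x)](v,v) = -\bigang{\jump D_t W(0),\ol{W}(0)}.\]

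For the right hand side I would evaluate $J''(W,W)$ directly from \eqref{eq:var2_closed}: the integral vanishes by the Jacobi equation \eqref{eq:Jacobi}; the reflection boundary terms at each $T_i \in \Rt$ cancel by the reflection Jacobi conditions \eqref{eq:Jacobi_b1}, \eqref{eq:Jacobi_b2} via the very same cancellation exhibited in the proof of Proposition \ref{proposition:null}; and the terms at $T_i \in \Kt$ vanish by \eqref{eq:Jacobi_kink}. Only the $T_0 = 0$ contribution from $\Kt_0$ remains, producing $J''(W,W) = -\bigang{\jump D_t W(0),\ol{W}(0)}$, which matches. The main technical point is the verification that the reflection boundary terms cancel cleanly in both computations, but this amounts to repackaging algebra already carried out in Proposition \ref{proposition:null} and Lemma \ref{lemma:reflective_path}, so no fundamentally new calculation is required.
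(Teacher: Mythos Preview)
Your proposal is correct and follows essentially the same approach as the paper: construct a one-parameter family of reflected physical paths $\alpha_{x(\ep),x(\ep)}$, identify its variation vector field with the element $\Phi(v)\in\JJ_{C^0}(\alpha)$, and equate the second derivative of $J$ along this family with $J''(W,W)$. The paper simply asserts the identity $\partial_{\ep\ep}^2 J[\beta(\cdot,\ep)]\rvert_{\ep=0}=J''(W,W)$ in one line (relying on the fact that $J''$ is by definition the second variation at the critical point $\alpha$), whereas you verify it by explicitly reducing both sides to $-\bigang{\jump D_tW(0),\ol W(0)}$; this extra computation is sound and perhaps more transparent, but not a genuinely different route.
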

\begin{proof}
  Let $\alpha_{x,y}(t)$ be defined as above; employing Riemann normal
  coordinates near $p$ to make sense of the following expressions, consider
  $$
\beta(t,\ep) := \alpha_{p+\ep w, p+\ep w}(t),
$$
a physical reflected path from $p+\ep w$ to itself.  Then
$$
W := \pa_\ep \beta(t,\ep)\rvert_{\ep=0}
$$
is a reflected Jacobi field in $\JJ_{C^0(\alpha)}$ by Proposition~\ref{prop:physicalvar},
with $W(0)=W(T)=w$.  Hence
$$
\Hess|_{x=p}[S(x,x)] (w,w) = \pa_{\ep\ep}^2 \rvert_{\ep=0}  J[\beta(t,\ep)] = J''(W,W),
$$
and the indices of these forms thus coincide (since the map $W \mapsto
W(0)$ is an isomorphism of $\JJ_{C^0}(\alpha)$ and $T_pM$).
\end{proof}

Combining Corollary \ref{cor:ind_idd}, Theorem \ref{thm:morse1} and Lemma \ref{lem:morse2}, we obtain
\begin{theorem}[Morse Index Theorem for periodic paths]
\label{thm:morse}
\begin{equation}\label{morseperiodic}\begin{aligned}\ind(J''|_{T_\alpha\Omega_{\per}(M)}) &= \ind (\Hess|_{x=p}[S(x,x)])
 \\ &\quad + \text{ number of conjugate points along }\alpha \text{ with
    respect to } p.\end{aligned}\end{equation}
\end{theorem}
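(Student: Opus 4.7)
The plan is to obtain the theorem by directly assembling the three results already established in this section, namely Corollary~\ref{cor:ind_idd}, Lemma~\ref{lem:morse2}, and Theorem~\ref{thm:morse1}. The argument is essentially a one-line combination, but the work is in keeping track of which hypotheses each step requires.

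The first step is to apply Corollary~\ref{cor:ind_idd}, which rests on the orthogonal (with respect to $J''$) direct sum decomposition
\[T_\alpha\Omega_{\per}(M) = \JJ_{C^0}(\alpha) \oplus T_\alpha\Omega_0(M;p,p)\]
established in the lemma preceding it. This decomposition requires that $p$ is not conjugate to itself along $\alpha$, so that the isomorphism \eqref{eq:isom} holds and each periodic variation vector field can be uniquely split as a closed Jacobi field plus a variation vanishing at the endpoints. Assuming this non-self-conjugacy, Corollary~\ref{cor:ind_idd} gives
\[\ind(J''|_{T_\alpha\Omega_{\per}(M)}) = \ind(J''|_{\JJ_{C^0}(\alpha)}) + \ind(J''|_{T_\alpha\Omega_0(M;p,p)}).\]

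Next, I would rewrite each summand separately. For the first, apply Lemma~\ref{lem:morse2}, which identifies the form $J''$ on the finite-dimensional space $\JJ_{C^0}(\alpha)$ with the Hessian at $p$ of the diagonal action function $x\mapsto S(x,x)$; this lemma relies on Proposition~\ref{prop:locdiffeo} (hence again on non-self-conjugacy of $p$) to guarantee that $S(x,y)$ is well-defined for $(x,y)$ in a neighborhood of $(p,p)$. For the second summand, view the closed trajectory $\alpha$ as a \reflective \phys from $p$ to $p$ with fixed endpoints, and apply Theorem~\ref{thm:morse1} directly to conclude that $\ind(J''|_{T_\alpha\Omega_0(M;p,p)})$ equals the number of conjugate points along $\alpha$ with respect to $p$. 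Substituting these into the displayed identity yields \eqref{morseperiodic}.

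The main (and essentially only) subtlety is the non-self-conjugacy hypothesis at $p$ that is implicit throughout: it is needed both for the orthogonal decomposition underlying Corollary~\ref{cor:ind_idd} and for the construction of $S(x,y)$ used in Lemma~\ref{lem:morse2}. Since this assumption was made at the outset of the subsection (\emph{``Assume that $p$ is not conjugate to itself along $\alpha$''}), no additional work is required here. One small bookkeeping point worth confirming is that the two appearances of $J''$ in \eqref{eq:var2_closed} and \eqref{eq:twovar-summary} are indeed the same form when restricted to $T_\alpha\Omega_0(M;p,p)$: this is immediate from comparing the formulas, since the extra $T_i\in\Kt_0$ term at $t=0$ in \eqref{eq:var2_closed} vanishes on variation fields that vanish at $t=0$ and $t=T$.
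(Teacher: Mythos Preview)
Your proposal is correct and matches the paper's own proof exactly: the paper simply states that the theorem follows by combining Corollary~\ref{cor:ind_idd}, Theorem~\ref{thm:morse1}, and Lemma~\ref{lem:morse2}. Your additional remarks on the non-self-conjugacy hypothesis and the agreement of the two $J''$ formulas on $T_\alpha\Omega_0(M;p,p)$ are accurate and helpful elaborations of points the paper leaves implicit.
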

Note that while the sum is an invariant of $\alpha$, each of the two
terms on the right-hand-side of \eqref{morseperiodic} may individually depend on
the choice of $p$ along $\alpha$ \cite[Section IV.B]{CrRoLi:90}.

\bibliographystyle{plain}
\bibliography{variational}

\end{document}